\newtheorem{Th}{Theorem}[section]
\newtheorem{Prop}[Th]{Proposition}
\newtheorem{Lem}[Th]{Lemma}
\newtheorem{Cor}[Th]{Corollary}
\newenvironment{altproof}[1]
{\noindent%\addvspace{0.3cm}
	{\em Proof of {#1}}.}
{\nopagebreak\mbox{}\hfill $\Box$\par\addvspace{0.5cm}}
\newcommand{\vp}{\varphi}
\newcommand{\eps}{\varepsilon}
\def\div{\mathop{\mathrm{div}\,}}
\def\supp{\mathrm{supp}}
\def\id{\mathrm{id}}
\def\Z{\mathbb{Z}}
\def\N{\mathbb{N}}
\def\R{\mathbb{R}}
\def\cl{\mathrm{cl\,}}
\def\U{\mathcal{U}} %U neighborhood of graph
\def\D{\mathcal{D}}
\newcommand{\cC}{{\mathcal C}}
\newcommand{\cD}{{\mathcal D}}
\newcommand{\cK}{{\mathcal K}}
\newcommand{\cM}{{\mathcal M}}
\newcommand{\cN}{{\mathcal N}}
\newcommand{\cO}{{\mathcal O}}
\newcommand{\cP}{{\mathcal P}}
\newcommand{\cS}{{\mathcal S}}
\newcommand{\cU}{{\mathcal U}}
\newcommand{\Om}{\Omega}
\newcommand{\weakto}{\rightharpoonup}
\def\id{\mathrm{id}}
\newcommand{\tu}{\widetilde{u}}
\newcommand{\tv}{\widetilde{v}}
\numberwithin{equation}{section}
\begin{document}
	\title{General class of optimal Sobolev inequalities and nonlinear scalar field equations}
\author[J. Mederski]{Jaros\l aw Mederski}
\address[J. Mederski]{\newline\indent
	Institute of Mathematics,
	\newline\indent 
	Polish Academy of Sciences,
	\newline\indent 
	ul. \'Sniadeckich 8, 00-656 Warsaw, Poland
	\newline\indent 
	and
	\newline\indent 
	Department of Mathematics,
	\newline\indent 
	Karlsruhe Institute of Technology (KIT), 
	\newline\indent 
	D-76128 Karlsruhe, Germany
}
\email{\href{mailto:jmederski@impan.pl}{jmederski@impan.pl}}
	\maketitle
	
	\pagestyle{myheadings} \markboth{\underline{J. Mederski}}{
		\underline{Nonlinear scalar field equations}}

\begin{abstract}
We find a class of optimal Sobolev inequalities 
$$\Big(\int_{\mathbb{R}^N}|\nabla u|^2\, dx\Big)^{\frac{N}{N-2}}\geq C_{N,G}\int_{\mathbb{R}^N}G(u)\, dx, \quad u\in\mathcal{D}^{1,2}(\mathbb{R}^N), N\geq 3,$$
where the nonlinear function $G:\mathbb{R}\to\mathbb{R}$ of class $\mathcal{C}^1$ satisfies general growth assumptions in the spirit of the fundamental works of Berestycki and Lions. We admit, however, a wider class of problems involving zero, positive and infinite mass cases as well as $G$ need not be even.
We show that any minimizer is radial up to a translation, moreover, up to a dilation, it is a least energy solution of the nonlinear scalar field equation
$$-\Delta u = g(u)\quad \hbox{in }\mathbb{R}^N,\quad\hbox{with }g=G'.$$ 
In particular, if $G(u)=u^2\log |u|$, then the sharp constant is $C_{N,G}:=2^*(\frac{N}{2})^{2^*}e^{\frac{2(N-1)}{N-2}}(\pi)^{\frac{N}{N-2}}$ and  $u_\lambda(x)=e^{\frac{N-1}{2}-\frac{\lambda^2}{2}|x|^2}$ with $\lambda>0$ constitutes the whole family of mini\-mizers up to translations. The optimal inequality provides a new proof of the classical logarithmic Sobolev inequality based on a Pohozaev manifold approach. Moreover, if $N\geq 4$, then there is at least one nonradial solution and if, in addition, $N\neq 5$, then there are infinitely many nonradial solutions of the nonlinear scalar field equation. The energy functional associated with the problem may be infinite on $\mathcal{D}^{1,2}(\mathbb{R}^N)$ and is not Fr\'echet diffe\-rentiable in its domain. We present a variational approach to this problem based on a new variant of Lions' lemma in $\mathcal{D}^{1,2}(\mathbb{R}^N)$.
\end{abstract}

{\bf MSC 2010:} Primary: 35J20, 58E05

{\bf Key words:} Nonlinear scalar field equations, logarithmic Sobolev inequality, cubic-quintic effect, critical point theory, nonradial solutions, concentration compactness, Lions' lemma, Pohozaev manifold, zero mass case, infinite mas case.

\section*{Introduction}
\setcounter{section}{1}

In view of the classical Sobolev inequality one can show that there is a constant $C_{N,G}>0$ such that the following inequality
\begin{equation}\label{eq:ineq}
\Big(\int_{\R^N}|\nabla u|^2\, dx\Big)^{\frac{N}{N-2}}\geq C_{N,G}\int_{\R^N}G(u)\, dx
\end{equation}
holds for all $u\in\cD^{1,2}(\R^N)$, where $\cD^{1,2}(\R^N)$ stands for the completion of $\cC_0^{\infty}(\R^N)$ with respect to the norm
$\|u\|=\Big(\int_{\R^N}|\nabla u|^2\,dx\Big)^{\frac12}$, $N\geq 3$, and $G$ satisfies
the following assumptions 
 \begin{itemize}
 	\item[(g0)] $g:\R \to \R$ is continuous, $g(0)=0$,
 	 $G(s)=\int_0^s g(t)\, dt$,
 	 $G_+(s)=\int_0^s \max\{g(t),0\}\, dt$  for $s\geq 0$ and $G_+(s)=\int_{s}^0 \max\{-g(t),0\}\, dt$ for $s<0$.
 	\item[(g1)] $\lim_{s\to 0}G_+(s)/|s|^{2^*}=0$, where $2^*=\frac{2N}{N-2}$.
 	\item[(g2)] There exists  $\xi_0>0$ such that $G(\xi_0)>0$.
	\item[(g3)] $\lim_{|s|\to \infty}G_+(s)/|s|^{2^*}=0$ and 
 	 	$\limsup_{|s|\to \infty}|g(s)|/|s|^{2^*-1}<\infty$.
 \end{itemize}

We show that \eqref{eq:ineq} is optimal, that is the equality holds in \eqref{eq:ineq} for some $u\neq 0$, and then $u$ is called  a {\em minimizer}. Observe that, if $u$ is a minimizer, then $u(\lambda\cdot)$ and $u(\cdot +y)$ are minimizers for any $\lambda>0$ and $y\in\R^N$. The first main result reads as follows.
\begin{Th} \label{ThMain1} Suppose that (g0)--(g3) are satisfied.\\
	(a) There is a radially symmetric solution $u\in\cD^{1,2}(\R^N)$ of
	\begin{equation}
	\label{eq}
	-\Delta u= g(u)\quad \hbox{in }\R^N
	\end{equation}
	such that $u\in \cM$ and
	$J(u)=\inf_{\cM} J>0$,
	where $J$ is the associated energy functional
	\begin{equation}\label{eq:action}
	J(u)=\frac12\int_{\R^N} |\nabla u|^2- \int_{\R^N} G(u)\, dx,
	\end{equation}
	and
	\begin{equation}\label{def:Poh}
	\cM=\Big\{u\in \cD^{1,2}(\R^N)\setminus\{0\}: \int_{\R^N}|\nabla u|^2=2^*\int_{\R^N}G(u)\, dx\Big\}.
	\end{equation}
	If in addition $g$ is odd, then $u$ is positive.\\
	(b) If $u\in\cM$ and $J(u)=\inf_{\cM} J$, then $u$ is a radial (up to a translation) solution to \eqref{eq}.\\
	(c) The optimal constant in \eqref{eq:ineq} is 
	$$C_{N,G}=2^*\Big(\frac12-\frac{1}{2^*}\Big)^{-\frac{2}{N-2}}(\inf_{\cM} J)^{\frac{2}{N-2}}.$$
	Moreover, if $u\in\cM$ and $J(u)=\inf_{\cM} J$, then
	$u$ is a minimizer of \eqref{eq:ineq}. If $u$ is a minimizer of \eqref{eq:ineq}, then $u (\lambda\cdot)\in\cM$ and $J(u(\lambda\cdot))=\inf_{\cM} J$ for a unique $\lambda>0$.
	In particular, there is a  radially symmetric solution of \eqref{eq} such that the equality holds in \eqref{eq:ineq}.
\end{Th}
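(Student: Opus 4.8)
The plan is to treat the three parts as facets of a single variational principle on the Pohozaev manifold $\cM$, the two structural ingredients being the dilation $u_t(x):=u(x/t)$ and the Pohozaev identity. Under this dilation $\int_{\R^N}|\nabla u_t|^2=t^{N-2}\int_{\R^N}|\nabla u|^2$ and $\int_{\R^N}G(u_t)\,dx=t^N\int_{\R^N}G(u)\,dx$, so for any $u$ with $\int_{\R^N}G(u)\,dx>0$ there is a unique $t=t(u)>0$, namely $t^2=\frac{\int_{\R^N}|\nabla u|^2}{2^*\int_{\R^N}G(u)\,dx}$, for which $u_t\in\cM$; a direct computation shows $s\mapsto J(u_s)$ attains its maximum exactly at $s=t(u)$. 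This both exhibits $\cM$ as the natural (Pohozaev) constraint and yields, on $\cM$, the identity $J(u)=\big(\frac12-\frac1{2^*}\big)\int_{\R^N}|\nabla u|^2$, reducing the minimization of $J$ over $\cM$ to that of $\int_{\R^N}|\nabla u|^2$ under the constraint.

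I would establish (c) first, since granted existence it is essentially algebraic. The optimal constant is $C_{N,G}=\inf\big\{Q(u):\int_{\R^N}G(u)\,dx>0\big\}$ for the scale-invariant quotient $Q(u):=\frac{\big(\int_{\R^N}|\nabla u|^2\big)^{N/(N-2)}}{\int_{\R^N}G(u)\,dx}$; invariance of $Q$ under $u\mapsto u_t$ together with the canonical rescaling onto $\cM$ gives $\inf Q=\inf_{\cM}Q$. Evaluating $Q$ on $\cM$ via $\int_{\R^N}G(u)\,dx=\frac1{2^*}\int_{\R^N}|\nabla u|^2$ gives $Q(u)=2^*\big(\int_{\R^N}|\nabla u|^2\big)^{2/(N-2)}$, and substituting $\int_{\R^N}|\nabla u|^2=\big(\frac12-\frac1{2^*}\big)^{-1}J(u)$ produces precisely $C_{N,G}=2^*\big(\frac12-\frac1{2^*}\big)^{-2/(N-2)}(\inf_{\cM}J)^{2/(N-2)}$. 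The correspondence between the two minimization problems is read off the same identities: a minimizer of \eqref{eq:ineq} minimizes $Q$, so its unique rescaling onto $\cM$ realizes $\inf_{\cM}J$, and conversely any $J$-minimizer on $\cM$ minimizes $Q$ and hence \eqref{eq:ineq}, with the dilation parameter forced to be $\lambda=t(u)^{-1}$.

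The analytic core is the existence in (a). I would first record coercivity from below: on $\cM$ the Sobolev inequality together with (g1), (g3) (which give $G_+(s)\le C|s|^{2^*}$) forces $\int_{\R^N}|\nabla u|^2\ge c>0$, whence $\inf_{\cM}J>0$. I would then take a minimizing sequence and, after controlling its sign (since $G$ is not assumed even, one cannot simply replace $u$ by $|u|$), apply Schwarz rearrangement to place it in the radial class: this does not increase $\int_{\R^N}|\nabla u|^2$ while keeping $\int_{\R^N}G(u)\,dx$ fixed, so after rescaling onto $\cM$ the energy does not increase. Compactness is then recovered from the new variant of Lions' lemma in $\cD^{1,2}(\R^N)$ announced in the abstract, yielding a nontrivial radial limit that realizes $\inf_{\cM}J$. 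To see that this minimizer $u$ solves \eqref{eq}, I would run a constrained-critical-point argument: writing $J'(u)=\mu P'(u)$ with $P(u)=\int_{\R^N}|\nabla u|^2-2^*\int_{\R^N}G(u)\,dx$, the minimizer satisfies $-\Delta u=\frac{1-2^*\mu}{1-2\mu}g(u)$; feeding this equation into its own Pohozaev identity and comparing with $u\in\cM$ forces the proportionality constant to equal $1$, i.e. $\mu=0$, so $J'(u)=0$ and $u$ solves \eqref{eq}. When $g$ is odd, replacing $u$ by $|u|$ preserves both $\int_{\R^N}G(u)\,dx$ and the energy, and the strong maximum principle then gives $u>0$.

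For (b) I would argue that any $J$-minimizer on $\cM$ must saturate the symmetrization inequality used in (a): since $\inf_{\cM}J$ coincides with its radial value, the P\'olya--Szeg\H{o} inequality holds with equality, which characterizes $u$ as a translate of a radial function, and the multiplier computation above again shows it solves \eqref{eq}. The hard part, and where I expect the real work to lie, is the compactness step in $\cD^{1,2}(\R^N)$: this is the critical/zero-mass regime in which the embedding into $L^{2^*}$ is non-compact and minimizing sequences may vanish, split, or concentrate, so a standard concentration--compactness argument does not suffice and the purpose-built Lions' lemma is indispensable. A secondary difficulty is that $J$ may equal $+\infty$ and is not Fr\'echet differentiable on its domain, so both the constrained Euler--Lagrange derivation and the rearrangement must be carried out within a subspace where these operations are justified; and since $\int_{\R^N}G(u)\,dx\ne\int_{\R^N}G(|u|)\,dx$ in general, the symmetrization argument must genuinely control the sign of $u$ (or pass through $G_+$) rather than invoke evenness of $G$.
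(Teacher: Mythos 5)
Part (c) of your proposal is correct and is essentially the paper's argument. The analytic parts, however, have gaps that are not merely technical. First, the symmetrization route to radiality does not work here. You flag that $G$ is not even, but you never resolve it: a minimizing sequence (and a minimizer) on $\cM$ can change sign, $\int_{\R^N}G(u)\,dx\neq\int_{\R^N}G(|u|)\,dx$, and the rearrangement $u^*$ of $|u|$ need not preserve $\int_{\R^N}G(u)\,dx$. Even for nonnegative $u$, equality in the P\'olya--Szeg\H{o} inequality does not by itself characterize $u$ as a translate of a radial function (one needs a Brothers--Ziemer type nondegeneracy of the level sets), so your argument for (b) would fail. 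The paper obtains radiality in (b) by an entirely different tool: it first shows (by projecting $u+tv$ back onto $\cM$ via dilation and differentiating the resulting inequality) that any minimizer is a weak solution, deduces that $u$ minimizes $v\mapsto\|v\|^2$ subject to $\int_{\R^N}G(v)\,dx=\lambda$, and then invokes Mari\c{s}'s symmetry theorem, which needs neither evenness of $G$ nor a sign condition. Radiality in (a) is then inherited from (b), not produced by rearrangement along the minimizing sequence.

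Second, two steps you describe as difficulties are in fact missing ideas rather than details. (i) Compactness: Lions' lemma only excludes vanishing and produces a nontrivial weak limit $\tu$ after translation; since $v\mapsto\int_{\R^N}G(v)\,dx$ is not weakly continuous and $G_-(u)$ need not even be integrable, nothing in your argument shows $\tu\in\cM$, $\tu\neq 0$ in the constraint sense, or $J(\tu)=\inf_\cM J$. The paper's mechanism is the approximation $J_\eps$ (truncating $G_-$ so that $G_-^\eps(s)\leq c_\eps|s|^{2^*}$, making the functional finite and G\^ateaux differentiable along $\cC_0^\infty$) together with Lemma \ref{lem:theta}, which via Vitali convergence and the projection inequality $J_\eps(m_{\cP}(u_n+tv))\geq c_\eps$ upgrades the weak limit to a critical point with $J_\eps(\tu)=c_\eps$ and strong convergence; a second limit $\eps\to 0$ with a Fatou argument then gives $G_-(u_0)\in L^1$ and $u_0\in\cM$. (ii) The Lagrange multiplier computation presupposes that $J$ and the constraint functional are $\cC^1$ on $\cD^{1,2}(\R^N)$, which is exactly what fails here ($J$ may be $+\infty$ and is not Fr\'echet differentiable); the identity $\mu=0$ is the right conclusion, but the differentiable structure needed to write $J'(u)=\mu P'(u)$ is absent, and the paper replaces it by the direct one-sided difference-quotient argument in the proofs of Lemma \ref{lem:theta} and part (b).
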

Using standard arguments we show that any (weak) solution $u$ of \eqref{eq} such that $G(u)\in L^1(\R^N)$ satisfies the Pohozaev identity
\begin{equation}\label{eq:Poho}
\int_{\R^N}|\nabla u|^2=2^*\int_{\R^N}G(u)\, dx,
\end{equation}
see Proposition \ref{PropPohozaev}.
Hence the {\em Pohozaev  manifold} $\cM$ contains all nontrivial finite energy solutions, and $u$ obtained in Theorem \ref{ThMain1} (a) is a {\em least energy solution}. Moreover if, in addition, 
$$G(s)\leq  -c_1s^2+c_2s^{2^*}$$ 
for some constants $c_1,c_2>0$, for instance in the positive mass case below \eqref{eq:pmc}, then \eqref{eq:Poho} implies that $u\in \cM\subset H^1(\R^N)$.\\
\indent If $g$ is odd, then positive and radially symmetric solutions of \eqref{eq} have been considered by Berestycki and Lions in their fundamental papers \cite{BerLions,BerLionsII} and multiplicity of radial solutions have been given in \cite{BerLionsII,BerLionsInfZero}. In fact, if we do not look for least energy solutions, then by the strong maximum principle we can solve \eqref{eq} under the following more general growth assumption introduced in \cite{BerLionsInfZero}:
\begin{itemize}
	\item[(g3')] Let $\xi_1:=\inf\{\xi>0: G(\xi)>0\}$. If $g(s)>0$ for all $s>\xi_1$, then
	 	\begin{equation*}
	 	\lim_{s\to \infty}G(s)/s^{2^*}=0,\hbox{ and }\limsup_{s\to \infty}g(s)/s^{2^*-1}<\infty.
	 	\end{equation*}
\end{itemize}
Namely, suppose that $g$ is odd and satisfies (g0)--(g2) and (g3').
Similarly as in \cite{BerLions}, we modify $g$ in the following way. If $g(s)>0$ for all $s>\xi_0$, then $\tilde{g}=g$. Otherwise we set
$\xi_1:=\inf\{\xi\geq \xi_0: g(\xi)\leq 0\}$, 
\begin{equation*}
%\label{eq}
\tilde{g}(s)=\left\{
\begin{array}{ll}
g(s)&\quad\hbox{if }0\leq s \leq \xi_1,\\
g(\xi_1) &\quad\hbox{if }s>\xi_1,
\end{array}
\right.
\end{equation*}
and $\tilde{g}(s)=-\tilde{g}(-s)$ for $s<0$. Hence $\tilde{g}$ satisfies assumptions (g0)--(g3) of Theorem 1.1 and by the strong maximum principle if $u\in \cD^{1,2}(\R^N)$ solves $-\Delta u=\tilde{g}(u)$, then $|u(x)|\leq \xi_1$ and $u$ is a solution of \eqref{eq}. However, it is not clear whether $J(u)=\inf_{\cM}J$ and $u$ is a least energy solution under assumptions (g0)--(g2) and (g3'). So far, a positive, radially symmetric and {\em least energy} solution has been obtained in \cite{BerLions}[Theorem 3] in the {\em positive mass case} for the modified nonlinearity $\tilde{g}$. Namely, instead of (g1), we have
\begin{equation}\label{eq:pmc}
-\infty<\liminf_{s\to 0}g(s)/s\leq\limsup_{s\to 0}g(s)/s=-m<0,
\end{equation}
and after the above modification of $g$, in fact, it has been assumed that 
\begin{eqnarray}\label{g3strong}
\lim_{|s|\to \infty}g(s)/|s|^{2^*-1}=0,
\end{eqnarray}
also in other works \cite{JeanjeanTanaka,MederskiBL,JeanjeanLu}. The latter condition excludes some important examples in physics, which are taken into account under our assumptions (g0)--(g3). For instance, take
\begin{equation}\label{eq:CubicQuintic}
g(s)=|s|^{p-2}s-|s|^{2^*-2}s-ms,\quad 2<p<2^*,
\end{equation}
and note that $g$ satisfies (g0)--(g3) if and only if $m\in (0,m_0)$, where
$$m_0:= \frac{(N-2)(2^*-p)}{N(p-2)}\Big(\frac{N(p-2)}{2p}\Big)^{\frac{2^*-2}{2^*-p}}.$$
In this work we are able to find a least energy solution minimizing the energy on $\cM$ under more general assumptions, in particular  \eqref{g3strong} could be violated, we no longer require that $g$ is odd as well as we may consider an {\em infinite mass case} with $m=\infty$.  Firstly we present the following simple corollary.
\begin{Cor}\label{th:cubicquintic}
Suppose that $g$ is given by \eqref{eq:CubicQuintic}.\\
(a) For any $m\in (0,m_0)$ there is a unique positive and radially symmetric solution $u$ of \eqref{eq}  minimizing $J$ on $\cM\subset H^1(\R^N)$, which is also a minimizer of \eqref{eq:ineq}.\\
(b) If $m\notin (0,m_0)$, then \eqref{eq} has only trivial finite energy solution.
\end{Cor}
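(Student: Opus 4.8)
The plan for (a) is to read off existence from Theorem \ref{ThMain1} and to treat uniqueness separately, while (b) will follow from the Pohozaev identity \eqref{eq:Poho} together with the Nehari identity obtained by testing \eqref{eq} with $u$. The nonlinearity $g(s)=|s|^{p-2}s-|s|^{2^*-2}s-ms$ is continuous, odd, and vanishes at $0$, and (as noted before the statement) it satisfies (g0)--(g3) precisely for $m\in(0,m_0)$. The two binding conditions are (g1), which forces the positive--mass sign and hence $m>0$ (for $m\le0$ one has $g\ge0$ near $0$ and $G_+(s)/|s|^{2^*}\to\infty$ as $s\to0$, so (g1) fails), and (g2), which requires $\sup_{s>0}G(s)>0$. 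Writing $G(s)=s^2(\varphi(s)-\tfrac m2)$ for $s>0$ with $\varphi(s)=\tfrac1p s^{p-2}-\tfrac1{2^*}s^{2^*-2}$, condition (g2) holds iff $m<2\sup_{s>0}\varphi$; maximising $\varphi$ (using $2^*/(2^*-2)=N/2$) gives the maximiser $s_*$ with $s_*^{2^*-p}=\tfrac{N(p-2)}{2p}$ and $2\varphi(s_*)=m_0$, which explains the formula for $m_0$. For $m\in(0,m_0)$ Theorem \ref{ThMain1} then provides a radial minimiser $u$ of $J$ on $\cM$ solving \eqref{eq} and realising the optimal constant in \eqref{eq:ineq}; oddness of $g$ makes $u$ positive, and since Young's inequality applied to $\tfrac1p|s|^p$ yields $G(s)\le-c_1s^2+c_2|s|^{2^*}$, the identity \eqref{eq:Poho} gives $u\in\cM\subset H^1(\R^N)$.

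The genuine difficulty in (a) is uniqueness, which I expect to be the main obstacle. A positive radial solution reduces to the ODE $u''+\tfrac{N-1}{r}u'+g(u)=0$ with $u'(0)=0$, $u(\infty)=0$, and here $g$ changes sign twice --- negative near $0$, positive on an intermediate interval, negative again for large values (the cubic--quintic structure) --- so the classical uniqueness theorems for nonlinearities with a single sign change do not apply verbatim. The plan is to invoke the refined ground--state uniqueness theory for sign--changing nonlinearities (in the spirit of Serrin--Tang and Pucci--Serrin), the technical core being the verification of the relevant structural/monotonicity hypothesis for the explicit $g$ in \eqref{eq:CubicQuintic} and the attendant shooting/Sturm separation argument. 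Once the positive radial solution of \eqref{eq} is known to be unique up to translation, it coincides with the least energy solution produced above, and hence is the unique positive radial minimiser of $J$ on $\cM$.

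For (b) I would use both identities valid for a finite energy solution $u$. The Pohozaev identity \eqref{eq:Poho} (Proposition \ref{PropPohozaev}) gives $\int_{\R^N}|\nabla u|^2=2^*\int_{\R^N}G(u)$, while testing \eqref{eq} with $u$ --- legitimate because finite energy forces $|u|^{2^*},|u|^p\in L^1$, and also $u^2\in L^1$ when $m<0$, so that $g(u)u\in L^1$ --- gives $\int_{\R^N}|\nabla u|^2=\int_{\R^N}g(u)u$. Since $g(s)s-2^*G(s)=-\tfrac{2^*-p}{p}|s|^p+m\tfrac{2^*-2}{2}s^2$, subtracting the two identities yields $m\tfrac{2^*-2}{2}\int_{\R^N}u^2=\tfrac{2^*-p}{p}\int_{\R^N}|u|^p$. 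If $m\le0$ the left--hand side is nonpositive and the right--hand side nonnegative, forcing $\int_{\R^N}|u|^p=0$ and hence $u\equiv0$. If $m\ge m_0$, then $G(s)=s^2(\varphi(s)-\tfrac m2)\le0$ for every $s$ by the maximisation above, so \eqref{eq:Poho} gives $\int_{\R^N}|\nabla u|^2\le0$; thus $\nabla u\equiv0$ and $u\equiv0$ in $\cD^{1,2}(\R^N)$. In either case only the trivial finite energy solution exists, which is (b).
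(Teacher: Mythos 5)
Your argument matches the paper's: part (a) is read off from Theorem \ref{ThMain1}(a) (together with the verification that (g0)--(g3) hold exactly for $m\in(0,m_0)$, which the paper asserts in the introduction without proof, and the $H^1$ membership via $G(s)\le -c_1s^2+c_2|s|^{2^*}$), and part (b) combines the Pohozaev identity with $G\le 0$ when $m\ge m_0$ and with the Nehari identity $J'(u)(u)=0$ when $m\le 0$, which is precisely the paper's two-line argument. For uniqueness the paper simply cites Lewin and Rota Nodari \cite{Lewin} rather than proving it, so your plan to rerun a Serrin--Tang/Pucci--Serrin ODE analysis is the only place where your write-up stops short of a complete argument, and it can be replaced by the same citation.
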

In a particular case $N=3$ and $p=4$, Corollary \ref{th:cubicquintic} was obtained by  Killip et al. in \cite{Killip}[Theorem 2.2.(i)], and we arrive at the cubic-quintic problem  which appears e.g. in nonlinear optics or in the the study of Bose–Einstein condensates \cite{Gammal,Mihalache}. In general case $N\geq 3$, the uniqueness and the non-degeneracy of positive solutions have been proved recently by  Lewin and Rota Nodari in \cite{Lewin}. In this paper we show, in addition,  that $u$ is a minimizer of $J$ on $\cM$ and, if $N\geq 4$, we there are nonradial sign-changing solutions to \eqref{eq:CubicQuintic}, see Corollary \ref{th:cubicquinticlog2} below.\\
\indent The relation between solutions to \eqref{eq} and minimizers of an inequality of the form \eqref{eq:ineq} was already presented e.g. in \cite{BerLions,BerLionsII} or more recently by Byeon, Jeanjean and Mariş in \cite[Lemma 1]{ByeonJeanjeanMaris} provided that any solution to \eqref{eq} satisfies the Pohozaev identity. In our situation, only finite energy solutions satisfies the Pohozaev identity (Proposition \ref{PropPohozaev}) and our assumptions admit also the {\em infinite mass case}, i.e. $m=\infty$ in \eqref{eq:pmc}. Together with the existence result in Theorem \ref{ThMain1}, we would like to show that such relation between solutions to \eqref{eq} and minimizers of \eqref{eq:ineq}  allows to provide also a new proof of the classical {\em logarithmic Sobolev inequality} given in \cite{Weissler}:
\begin{equation}\label{eq:ineqLogSob}
\frac{N}{4}\log\Big(\frac{2}{\pi e N}\int_{\R^N}|\nabla u|^2\,dx\Big)\geq  \int_{\R^N}|u|^2\log(|u|)\,dx,\quad\hbox{for }u\in H^1(\R^N), \int_{\R^N}|u|^2\,dx=1,
\end{equation}
which is equivalent to the Gross inequality \cite{Gross}. Indeed,
note that the following  nonlinearity
\begin{equation}\label{eq:logSob}
G(s)=s^2\log |s|\quad\hbox{for }s\neq 0,\hbox{ and }G(0)=0
\end{equation}
is in the {\em infinite mass case} and satisfies (g0)--(g3). Therefore,
in view of Theorem \ref{ThMain1} there is a positive and radially symmetric solution of \eqref{eq} with 
\begin{equation}\label{eq:lognon}
g(s)=2s\log |s| + s.
\end{equation}
The Gausson \cite{Bialynicki}
 \begin{equation}\label{eq:Gausson}
 u_1(x)=e^{\frac{N-1}{2}-\frac{1}{2}|x|^2}
 \end{equation}
solves \eqref{eq} and in view of Serrin and Tang \cite{SerrinTang} (cf. \cite{dAvenia,DelPino,DelPinoJMPA}), $u_1$  is a unique positive and  radial solution of \eqref{eq} up to a translation. Thus, one easily verifies that 
$J(u_1)=\big(\frac{1}{2}-\frac{1}{2^*}\big)e^{N-1}\frac{N}{2}(\pi)^{\frac{N}{2}}=
\frac{1}{2}e^{N-1}(\pi)^{\frac{N}{2}}=\inf_{\cM}J$ and by Theorem \ref{ThMain1} (c)
\begin{equation}\label{def:CNG}
C_{N,G}:=2^*\Big(\frac{N}{2}\Big)^{\frac{2}{N-2}}e^{\frac{2(N-1)}{N-2}}(\pi)^{\frac{N}{N-2}}.
\end{equation}
Moreover $u_1$ is a unique minimizer of \eqref{eq:ineq} solving \eqref{eq} up to a translation. Now observe that
 \eqref{eq:ineq}  is equivalent to
 \begin{equation}\label{eq:scaled}
\Big(\int_{\R^N}|\nabla u|^2\,dx\Big)^{\frac{N}{N-2}}\geq C_{N,G}\max_{\alpha\in\R}\Big\{ e^{-\alpha2^*}\int_{\R^N}G(e^{\alpha}u)\,dx\Big\},\quad\hbox{ for } u\in\cD^{1,2}(\R^N),
 \end{equation}
and the equality holds if and only if $u=e^{\beta}u_1(\lambda\cdot)$ for some $\beta\in\R$, $\lambda>0$ and up to a translation.
Assuming that $\int_{\R^N}u^2\, dx=1$,
the maximum of the right hand side of \eqref{eq:scaled} is attained at $\alpha=\frac{N-2}{4}-\int_{\R^N}|u|^2\log |u|\,dx$. Hence, taking into account \eqref{def:CNG} we verify that \eqref{eq:scaled} is equivalent to \eqref{eq:ineqLogSob} provided that $\int_{\R^N}|u|^2\,dx=1$.
Moreover, \eqref{eq:ineqLogSob} is sharp and the family $\lambda^{\frac{N}{2}}u_1(\lambda\cdot)$, $\lambda>0$ consists of unique minimizers up to translations.\\ 
\indent Recall that the optimality of \eqref{eq:ineqLogSob} and the characterization of minimizers have been already proved by Carlen \cite{Carlen} in the context of the Gross inequality as well as by del Pino and Dolbeault \cite{DelPino,DelPinoJMPA} for the interpolated Gagliardo–Nirenberg inequalities and the $L^p$-Sobolev logarithmic inequality. A generalization of the optimal Gross inequality in the context of Orlicz spaces is given by Adams \cite{Adams}. The optimal inequality \eqref{eq:ineq} can be also regarded as a generalization of \eqref{eq:ineqLogSob} and note that we do not need any structural assumptions in the Orlicz setting as in \cite{Adams}. We would like to also mention that Wang and Zhang \cite{WangZhang} have recently provided another proof of the logarithmic Sobolev inequality due to Lieb and Loss \cite{LiebLoss} based on an approximation by minimizers of the classical Sobolev inequalities.\\
\indent In order to solve \eqref{eq} under assumptions (g0)--(g3), we consider the associated energy functional $J:\cD^{1,2}(\R^N)\to\R\cup\{\infty\}$ given by \eqref{eq:action} and observe that $J$ may be infinite on a dense subset of $\cD^{1,2}(\R^N)$. We look for weak solutions of \eqref{eq}, i.e. $J'(u)(v)=0$ for any $v\in\cC_0^{\infty}(\R^N)$, however, $J$ cannot be Fr\'echet differentiable and this is the first main difficulty in comparison to the the positive mass case \eqref{eq:pmc} studied e.g. in \cite{BerLions,BerLionsII,MederskiBL,JeanjeanLu,JeanjeanTanaka}. Note that in the positive mass case and under assumption \eqref{g3strong}, $J$ is well-defined, of class $\cC^1$ on $H^1(\R^N)$ and Jeanjean and Tanaka \cite{JeanjeanTanaka} showed that the least energy solution obtained in \cite{BerLions} minimizes the energy on the {\em Pohozaev manifold} $\cM$  defined by \eqref{def:Poh} in $H^1(\R^N)$. In Theorem \ref{ThMain1} (a) we prove that there is a least energy solution minimizing $J$ on the Pohozaev manifold $\cM$  under more general assumptions (g0)--(g3) including also the zero mass case ($m=0$) as well as the infinite mass case ($m=\infty$), e.g. \eqref{eq:logSob}. We also present a simple approach of finding minimizers on $\cM$, which is equivalent to finding minimizers of \eqref{eq:ineq} -- see Section \ref{sec:proof}, in particular  Lemma \ref{lem:theta}.\\
\indent Note that in \cite{MederskiBL} the positive mass case has been studied, and if $N\geq 4$ nonradial solutions have been found there. Next, Jeanjean and Lu \cite{JeanjeanLu} have recently provided a mountain pass approach and reproved the main results from \cite{MederskiBL} based on the monotonicity trick \cite{Jeanjean}. Therefore, our next aim is to show that the similar results hold under assumptions (g0)--(g3) and we give an answer to the  problem \cite{BerLionsII}[Section 10.8] concerning the existence and multiplicity of nonradial solutions of \eqref{eq} also in the zero mass case as well as in the infinite mass case.\\
\indent Namely, let $N\geq 4$ and similarly as in \cite{BartschWillem}, let us fix $\tau\in\cO(N)$ such that $\tau(x_1,x_2,x_3)=(x_2,x_1,x_3)$ for $x_1,x_2\in\R^m$ and $x_3\in\R^{N-2m}$, where $x=(x_1,x_2,x_3)\in\R^N=\R^m\times\R^m\times \R^{N-2m}$ and $2\leq m\leq N/2$.
We define
\begin{equation}\label{eq:DefOfX}
X_{\tau}:=\big\{u\in \cD^{1,2}(\R^N): u(x)=-u(\tau x)\;\hbox{ for all }x\in\R^N\big\}.
\end{equation}
Clearly, if $u\in X_\tau$ is radial, i.e. $u(x)=u(\rho x)$ for any $\rho \in\cO(N)$, then $u=0$. Hence $X_\tau$ does not contain nontrivial radial functions. Then $\cO_1:=\cO(m)\times \cO(m)\times\id \subset \cO(N)$ acts isometrically on $\cD^{1,2}(\R^N)$ and let $\cD^{1,2}_{\cO_1}(\R^N)$ denote the subspace of invariant functions with respect to $\cO_1$.
\begin{Th}\label{ThMain2}
If $g$ is odd and $N\geq 4$, then there is a solution  $u\in \cM\cap X_\tau\cap \cD^{1,2}_{\cO_1}(\R^N)$ of \eqref{eq} such that 
	\begin{equation}\label{eq:thmain2}
	J(u)=\inf_{\cM\cap X_\tau\cap \cD^{1,2}_{\cO_1}(\R^N)}J>2\inf_{\cM} J.
	\end{equation}
\end{Th}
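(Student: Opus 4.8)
The plan is to obtain the nonradial solution as a constrained minimizer of $J$ on $\cM\cap\cX$, where I abbreviate $\cX:=X_\tau\cap\cD^{1,2}_{\cO_1}(\R^N)$. Since $\cX$ contains no nonzero radial function, every nonzero critical point produced in $\cX$ is automatically nonradial, and the proof divides into three tasks: the strict energy gap $\inf_{\cM\cap\cX}J>2\inf_\cM J$, the attainment of $\inf_{\cM\cap\cX}J$, and the verification that the minimizer solves \eqref{eq}. The first and third are soft consequences of Theorem \ref{ThMain1}, while the attainment is the core difficulty.

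For the energy gap I would use a \emph{nodal splitting}. If $u\in\cX$, then $u(\tau\cdot)=-u$, so the measure-preserving substitution $x\mapsto\tau x$ gives $\int_{\R^N}G(u)=\int_{\R^N}G(-u)$ and hence $\int_{\{u>0\}}G(u)=\int_{\{u<0\}}G(u)$. Writing $u=u^+-u^-$ with $u^-=u^+(\tau\cdot)$, one finds $\|u\|^2=2\|u^+\|^2$ and $\int_{\R^N}G(u)=2\int_{\R^N}G(u^+)$, so that $u\in\cM$ forces $u^+\in\cM$ and $J(u)=2J(u^+)\ge2\inf_\cM J$. This is strict: equality would make $u^+$ a minimizer of $J$ on $\cM$, hence by Theorem \ref{ThMain1}(b) a translate of the positive radial least energy solution, which is everywhere positive and so cannot vanish on the nonempty open set $\{u<0\}$.

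The heart of the matter is the attainment. A minimizing sequence $(u_n)\subset\cM\cap\cX$ is bounded because $J=(\tfrac12-\tfrac1{2^*})\|\cdot\|^2$ on $\cM$, and I would pass to a weak limit inside the weakly closed subspace $\cX$. The obstructions in $\cD^{1,2}(\R^N)$ are vanishing, concentration and splitting. Vanishing is excluded by the variant of Lions' lemma announced in the introduction, since $\int_{\R^N}G(u_n)=\tfrac1{2^*}\|u_n\|^2$ stays bounded away from $0$. Concentration is excluded by the symmetry: because $2\le m$, the $\cO(m)\times\cO(m)$-orbit of any point off the axis $\{x_1=x_2=0\}$ is a product of spheres of positive dimension, so a concentrating bubble would have to be replicated along this orbit and its energy would blow up, while on the axis $\tau$ acts trivially and an antisymmetric function must vanish. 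The only residual noncompact direction compatible with $\cO_1$ is translation in the $\R^{N-2m}$ variable, which I would remove by recentering; the energy gap then rules out splitting into two escaping antisymmetric pieces. Hence, after recentering, $u_n$ converges and its limit lies in $\cM\cap\cX$ and attains $\inf_{\cM\cap\cX}J$.

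Finally I would check that the minimizer $u$ solves \eqref{eq}. Exactly as in Theorem \ref{ThMain1}, $\cM$ is a \emph{natural constraint}: the dilation path $t\mapsto u(\cdot/t)$ stays in $\cX$, crosses $\cM$ transversally, and has its critical point at $t=1$; testing the Lagrange relation for the constraint $\|v\|^2=2^*\int_{\R^N}G(v)$ with the generator $-x\cdot\nabla u\in\cX$ of this path then forces the multiplier to vanish, so $J'(u)v=0$ for all $v\in\cX$. Since the group generated by $\cO_1$ and the sign reflection $v\mapsto-v(\tau\cdot)$ leaves $J$ invariant, the principle of symmetric criticality upgrades this to $J'(u)v=0$ for all $v\in\cD^{1,2}(\R^N)$, so $u$ is a nonradial weak solution of \eqref{eq} with $J(u)=\inf_{\cM\cap\cX}J>2\inf_\cM J$. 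The genuine obstacle is the attainment: as $J$ is neither finite nor Fr\'echet differentiable throughout $\cD^{1,2}(\R^N)$ and no compact embedding is available in the zero- and infinite-mass regimes, excluding concentration rigorously forces one to combine the new Lions-type lemma with a quantitative version of the orbit-replication estimate.
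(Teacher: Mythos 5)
Your architecture coincides with the paper's: minimize over $\cM\cap\cX$ with $\cX=X_\tau\cap\cD^{1,2}_{\cO_1}(\R^N)$, recover compactness modulo translations along $\{0\}\times\{0\}\times\R^{N-2m}$ via the symmetry-adapted Lions lemma (Corollary \ref{CorLions1}), pass from constrained minimizer to solution by symmetric criticality, and obtain the strict gap by splitting $u$ into two pieces exchanged by $\tau$ and applying Theorem \ref{ThMain1}(b). The genuine gap is in the attainment, exactly where you flag ``the genuine obstacle''. You propose to minimize $J$ on $\cM$ directly and run a vanishing/concentration/splitting trichotomy, but $J$ is not finite (let alone differentiable) on $\cD^{1,2}(\R^N)$, the constraint defining $\cM$ involves the possibly non-integrable $G_-(u)$ and does not pass to weak limits, and your exclusion of dichotomy (``the energy gap rules out splitting into two escaping antisymmetric pieces'') is not an argument: the inequality $\inf_{\cM\cap\cX}J>2\inf_{\cM}J$ compares two different constraint sets and yields no strict subadditivity for $\inf_{\cM\cap\cX}J$ itself. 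The paper never minimizes $J$ on $\cM$ directly: it minimizes the truncations $J_\eps$ on $\cM_\eps$, where $G_{-}^\eps(u)\leq c_\eps|u|^{2^*}$ makes everything finite and G\^ateaux differentiable along $\cC_0^\infty$ directions; Lemma \ref{lem:theta} then shows that a minimizing sequence with nontrivial weak limit converges strongly to a critical point --- not via a trichotomy but by expanding the comparison $J_\eps(m_\cP(u_n+tv))\geq c_\eps$ in $t$ --- and finally $\eps\to 0$ with Fatou's lemma recovers $G_-(u_0)\in L^1(\R^N)$ and $u_0\in\cM$. Some approximation/comparison device of this kind is indispensable here and is absent from your proposal.

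Two further steps would fail as written. The Lagrange-multiplier argument tested against $-x\cdot\nabla u$ is not available: $x\cdot\nabla u$ need not belong to $\cD^{1,2}(\R^N)$, and $v\mapsto\int_{\R^N}G(v)\,dx$ is not of class $\cC^1$ in this setting, so there is no Lagrange relation to test; the paper instead derives $\int_{\R^N}\langle\nabla u,\nabla v\rangle\,dx\geq\int_{\R^N}g(u)v\,dx$ for all $v\in\cC_0^\infty(\R^N)$ from the same scaling comparison used in Lemma \ref{lem:theta}. In the gap estimate, the passage from $\int_{\R^N}G(u)\,dx=\int_{\R^N}G(-u)\,dx$ to $\int_{\{u>0\}}G(u)\,dx=\int_{\{u<0\}}G(u)\,dx$ is not valid for non-even $G$: the substitution $x\mapsto\tau x$ gives $\int_{\{u<0\}}G(u)\,dx=\int_{\{u>0\}}G(-u)\,dx$, a different quantity, so $u^+$ need not lie in $\cM$. (The paper's $\Omega_1/\Omega_2$ splitting is open to the same remark.) The repair is to send whichever piece satisfies $2^*\int_{\R^N}G\geq\|u^+\|^2$ onto $\cM$ by the dilation $v\mapsto v(r\cdot)$ with $r\geq 1$; this still yields $J(u)\geq 2\inf_{\cM}J$, and equality forces that piece to be a minimizer on $\cM$, hence radial up to a translation by Theorem \ref{ThMain1}(b), the desired contradiction.
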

Clearly, we infer that problem \eqref{eq} with \eqref{eq:CubicQuintic} or with \eqref{eq:logSob} has a nonradial solution for $N\geq 4$. Moreover,
if, in addition, $N\neq 5$, then we find infinitely many nonradial solutions. Indeed,  we may assume that $N-2m\neq 1$ and let us consider $\cO_2:=\cO(m)\times \cO(m)\times\cO(N-2m)\subset \cO(N)$ acting isometrically on $\cD^{1,2}(\R^N)$ with the subspace of invariant function denoted by $\cD^{1,2}_{\cO_2}(\R^N)$.
\begin{Th}\label{ThMain3}
If $g$ is odd, $N\geq 4$ and $N\neq 5$,  then the following statements hold.\\
(a) There is a solution  $u\in \cM\cap X_\tau\cap \cD^{1,2}_{\cO_2}(\R^N)$ of \eqref{eq} such that 
\begin{equation}\label{eq:thmain3}
J(u)=\inf_{\cM\cap X_\tau\cap \cD^{1,2}_{\cO_2}(\R^N)}J\geq \inf_{\cM\cap X_\tau\cap H^1_{\cO_1}(\R^N)}J.
\end{equation}
(b) 
There is an infinite sequence of solutions $(u_n)\subset \cM\cap X_\tau\cap \cD^{1,2}_{\cO_2}(\R^N)$ to \eqref{eq} such that $J(u_n)\to\infty$ as $n\to\infty$.
\end{Th}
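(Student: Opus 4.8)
The plan is to realize the solutions as constrained critical points of $J$ on the Pohozaev manifold intersected with a symmetric subspace, and to promote them to genuine solutions of \eqref{eq} by Palais' principle of symmetric criticality. Throughout I would work in $E:=X_\tau\cap\cD^{1,2}_{\cO_2}(\R^N)$. The group $\cO_2=\cO(m)\times\cO(m)\times\cO(N-2m)$, together with the $\Z$-mod-$2$ action generated by $u\mapsto u\circ\tau$, acts isometrically on $\cD^{1,2}(\R^N)$, and since $g$ (hence $G$) is equivariant under these orthogonal transformations, a critical point of $J|_E$ is a critical point of $J$ on all of $\cD^{1,2}(\R^N)$; the antisymmetry built into $X_\tau$ forces every nonzero element of $E$ to be nonradial.

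For part (a) I would minimize $J$ over $\cM\cap E$. The first point is compactness. Because $N-2m\neq1$ (this is exactly where $N\neq5$ enters: for $N\geq4$, $N\neq5$ one may choose $m$ with $N-2m\in\{0,2,3,\dots\}$), each factor of $\cO_2$ acts with orbits of positive dimension away from its fixed-point set, so $\cO_2$-invariant functions depend only on $(|x_1|,|x_2|,|x_3|)$ and inherit the improved decay of Strauss--Lions type. I would feed this into the $\cD^{1,2}$-variant of Lions' lemma established earlier in the paper to show that a minimizing sequence, placed on $\cM$ by the dilation $u\mapsto u(\lambda\cdot)$ that fixes the constraint for a unique $\lambda$ (cf.\ Lemma \ref{lem:theta}), neither vanishes nor splits, so that after passing to a weak limit one recovers a nonzero $u\in E$. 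A Br\'ezis--Lieb argument together with the growth bounds (g1) and (g3) gives convergence of the nonlinear term, weak lower semicontinuity handles $\int|\nabla u|^2$, and one concludes $u\in\cM\cap E$ with $J(u)=\inf_{\cM\cap E}J$; a Lagrange-multiplier computation on $\cM$, exactly as in the unconstrained setting of Theorem \ref{ThMain1}, shows the multiplier is admissible and $u$ solves \eqref{eq}. The inequality in \eqref{eq:thmain3} is the easy part: since $\cO_1\subset\cO_2$ one has $\cD^{1,2}_{\cO_2}\subset\cD^{1,2}_{\cO_1}$, so minimizing over the smaller set raises the infimum.

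For part (b), with $g$ odd the functional $J$ is even and both $\cM$ and $E$ are invariant under $u\mapsto-u$, so I would run an equivariant minimax scheme. Reformulating the problem via the scaling that trivializes the Pohozaev constraint, so that $\cM\cap E$ is parametrized through $u\mapsto u(\lambda(u)\cdot)$, I would set
\begin{equation}
c_n:=\inf_{A\in\Sigma_n}\ \sup_{u\in A}J(u),
\end{equation}
where $\Sigma_n$ is the family of closed symmetric subsets $A\subset\cM\cap E$ of Krasnoselskii genus $\gamma(A)\geq n$. Two things must be checked: that $c_n\to\infty$ and that each $c_n$ is a critical value. Growth of $c_n$ follows from a dimension count, since on any symmetric set of genus $\geq n$ some function has large norm while the subcritical control of $\int G(u)$ coming from (g1), (g3) forces $J$ to be large there; that each $c_n$ is attained and critical follows from an equivariant deformation argument once the Palais--Smale condition is verified at level $c_n$ in $E$, again via the symmetric Lions lemma and the compactness supplied by $N-2m\neq1$. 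Distinct, unboundedly growing critical values then yield infinitely many solutions $u_n\in\cM\cap X_\tau\cap\cD^{1,2}_{\cO_2}(\R^N)$ with $J(u_n)\to\infty$.

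The main obstacle, present in both parts, is that $J$ is neither finite nor Fr\'echet differentiable on a dense subset of $\cD^{1,2}(\R^N)$, so classical deformation lemmas and genus theory do not apply off the shelf. The real work is to carry out the minimax and the Palais--Smale analysis within a framework adapted to this singular $J$ — replacing pointwise differentiability by the scaling structure on $\cM$ and by the one-sided variational inequalities satisfied by minimizing and minimax sequences — and to run the $\cD^{1,2}$-Lions lemma in the presence of the two noncompact symmetries, translations and dilations, simultaneously. Establishing compactness of the equivariant minimax sequences at \emph{every} level $c_n$, rather than only at the ground-state level, is the step I expect to be most delicate.
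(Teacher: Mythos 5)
Your part (a) follows essentially the paper's route (a symmetry-adapted Lions-type lemma, minimization on the Pohozaev manifold, Palais' principle of symmetric criticality), except that you work directly with $J$; the paper first minimizes the approximations $J_\eps$ on $\cM_\eps$ and only then lets $\eps\to0$, precisely because $J$ may be infinite and is not differentiable --- the same device already used for Theorem \ref{ThMain1}, so this is largely presentational. One caution: the compactness does not come from pointwise Strauss-type decay of $\cO_2$-invariant functions, but from the compatibility of $\R^N$ with $\cO_2$ in the sense of Lions/Willem (Proposition \ref{PropLionsCompatib}, Lemma \ref{lem:BrezisLieb}(b)); the paper deliberately avoids the radial lemma.

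Part (b) has genuine gaps. First, you acknowledge that $J$ is neither finite nor of class $\cC^1$ and that ``the real work'' is to adapt the minimax machinery, but you do not supply the adaptation; the one-sided variational inequality that rescues the minimization in Lemma \ref{lem:theta} is specific to sequences at the \emph{infimum} level (one compares $J(m_\cP(u+tv))$ with the infimum from below), and no analogous Euler--Lagrange information is extracted from a minimax characterization at level $c_n$. The paper's resolution is a second approximation, $G_{(\eps,\lambda)}(s)=\psi_\lambda(s)G_{+}(s)-\lambda|s|^{2^*}-G_{-}^\eps(s)$, which makes $J_{(\eps,\lambda)}$ genuinely $\cC^1$, lets the deformation theory run for $\Phi_{(\eps,\lambda)}=J_{(\eps,\lambda)}\circ m$ on the open subset $\cU$ of the $\cC^{1,1}$ sphere $\cS$ (not on $\cM$ itself, which need not be a $\cC^{1,1}$ manifold), and, via \eqref{eq:gepslambda}, forces the Lagrange multiplier $\theta$ in Lemma \ref{lem:Mcond3} to be nonzero; two further limits $\lambda\to0$ and $\eps\to0$ are then needed, controlled by the uniform bounds $\beta^k_{(1/2,0)}\leq\beta^k_{(\eps,\lambda)}\leq M^k$ and a Fatou/Pohozaev argument keeping the limits nontrivial and on $\cM$. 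Second, your claim that $c_n\to\infty$ ``follows from a dimension count'' because a set of genus $\geq n$ contains a function of large norm is not an argument: on the Pohozaev manifold large genus does not by itself force large energy, and the paper instead shows that if $\beta^k\to\bar\beta<\infty$ then the critical set at level $\bar\beta$ is compact of finite genus, and an equivariant deformation contradicts the subadditivity of the genus. Third, you never verify that your classes $\Sigma_n$ are nonempty, i.e.\ that $\cM\cap X_\tau\cap\cD^{1,2}_{\cO_2}(\R^N)$ contains compact symmetric sets of arbitrarily large genus; this requires the odd maps $S^{k-1}\to H_0^1(B(0,R))$ of Berestycki--Lions, modified by the cutoff $\vp(|x_1|-|x_2|)$ to land in $X_\tau$ while keeping $\int_{\R^N}G>0$ for $R$ large, and is where (g2) enters.
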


As a consequence of Theorems \ref{ThMain2} and  \ref{ThMain3} we obtain:
\begin{Cor}\label{th:cubicquinticlog2}
	Suppose that $N\geq 4$, and $g$ is given by \eqref{eq:CubicQuintic} with $m\in (0,m_0)$ or $g$ is the logarithmic nonlinearity \eqref{eq:lognon}.\\
	(a)   Then there is a non-radial and sign-changing solution to \eqref{eq} in $\cM\cap X_\tau\cap \cD^{1,2}_{\cO_1}(\R^N)$.\\
	(b) If $N\neq 5$, then there is an infinite sequence of non-radial and sign-changing solutions $(u_n)\subset \cM\cap X_\tau\cap \cD^{1,2}_{\cO_2}(\R^N)$ to \eqref{eq} such that $J(u_n)\to\infty$ as $n\to\infty$.
\end{Cor}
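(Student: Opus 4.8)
The plan is to deduce Corollary~\ref{th:cubicquinticlog2} directly from Theorems~\ref{ThMain2} and \ref{ThMain3} by verifying that each of the two nonlinearities in question fits the hypotheses of those theorems, and then reading off the qualitative conclusions (non-radiality and sign-change) from the structure of the symmetry spaces $X_\tau$ and $\cD^{1,2}_{\cO_i}(\R^N)$. First I would check the admissibility of the nonlinearities. For the cubic-quintic $g$ of \eqref{eq:CubicQuintic} with $m\in(0,m_0)$, the discussion following \eqref{eq:CubicQuintic} already records that $g$ satisfies (g0)--(g3) precisely on this range of $m$; for the logarithmic nonlinearity \eqref{eq:lognon}, the paragraph after \eqref{eq:logSob} records that $G(s)=s^2\log|s|$ satisfies (g0)--(g3). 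In both cases the function $g$ is odd: $|s|^{p-2}s$, $|s|^{2^*-2}s$ and $ms$ are each odd, and $2s\log|s|+s$ is odd since $s\mapsto s\log|s|$ is odd. Thus all the standing assumptions of Theorems~\ref{ThMain2} and \ref{ThMain3}, including the oddness required for part~(b) of Theorem~\ref{ThMain3}, are in force.

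Next I would invoke the theorems. Part~(a) of the corollary follows from Theorem~\ref{ThMain2}: for $N\geq 4$ it produces a solution $u\in\cM\cap X_\tau\cap\cD^{1,2}_{\cO_1}(\R^N)$ of \eqref{eq}. Part~(b) follows from Theorem~\ref{ThMain3}(b): since $g$ is odd and $N\geq 4$, $N\neq 5$, there is an infinite sequence $(u_n)\subset\cM\cap X_\tau\cap\cD^{1,2}_{\cO_2}(\R^N)$ of solutions with $J(u_n)\to\infty$. The only remaining point is to upgrade ``solution in the symmetric space'' to the qualitative statement ``non-radial and sign-changing,'' which is where the symmetry structure must be used.

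The key step, then, is the extraction of the two qualitative properties from membership in $X_\tau$. Non-radiality is immediate from the observation already made in the text just after \eqref{eq:DefOfX}: if $u\in X_\tau$ is radial then $u(x)=u(\tau x)$ for the rotation $\tau\in\cO(N)$, while the defining relation of $X_\tau$ forces $u(x)=-u(\tau x)$, whence $u\equiv 0$; since the solutions produced are nontrivial (they lie in $\cM$, which excludes $0$), they cannot be radial. Sign-change is equally direct: the relation $u(x)=-u(\tau x)$ means that if $u$ is positive somewhere, say $u(x_0)>0$, then $u(\tau x_0)=-u(x_0)<0$, so $u$ takes both signs. Hence every nontrivial element of $X_\tau$ is automatically both non-radial and sign-changing, and applying this to the solutions from Theorems~\ref{ThMain2} and \ref{ThMain3} yields parts~(a) and (b).

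I do not anticipate a genuine obstacle here, since the corollary is a packaging of the two main theorems; the only matter requiring care is that the qualitative claims (non-radial, sign-changing) are indeed structural consequences of the antisymmetry built into $X_\tau$ rather than something needing a separate argument, and that the oddness of the logarithmic nonlinearity \eqref{eq:lognon}---needed for the infinite-multiplicity part~(b)---is correctly verified. Both are routine, so the proof is essentially a verification of hypotheses followed by a two-line symmetry argument.
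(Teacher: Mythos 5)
Your proposal is correct and matches the paper's intent exactly: the paper states the corollary as an immediate consequence of Theorems \ref{ThMain2} and \ref{ThMain3} without a separate proof, and your verification of (g0)--(g3) and oddness for the two nonlinearities, together with the observation that any nontrivial element of $X_\tau$ is automatically non-radial and sign-changing, supplies precisely the routine details the paper leaves implicit.
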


Note that there is little work on the problem \eqref{eq} involving the zero or infinite mass case expressed by general assumptions without Ambrosetti-Rabinowitz-type condition \cite{AR}, or any monotonicity behaviour. The first difficulty is that $J$ may be infinite and is not Fr\'echet differentiable in its domain. The second one is related with the lack of compactness of the problem in $\R^N$; even if we find a Palais-Smale sequence, we do not know whether the sequence is bounded and contains a (weakly) convergent subsequence. Berestycki and Lions in \cite{BerLions}  minimized  $u\mapsto \int_{\R^N}|\nabla u|^2\, dx$ on  the constraint of radial functions such that
$G(u)\in L^1(\R^N)$ and $\int_{\R^N}G(u)\, dx=1$.
In order to get multiplicity of solutions they approximated the zero mass case $g$ by suitable functions $g_\eps$ in the positive mass case, i.e. $-g'_\eps(0)>0$ and $g_\eps\to g$ uniformly on compact subsets of $\R$ as $\eps\to 0^+$. Using results of \cite{BerLionsII} they solved the approximated problem in the positive mass case.  Letting $\eps\to 0$, a sequence of radial solutions of \eqref{eq} have been obtained. Another approach based on approximations of $\cD^{1,2}_{\cO(N)}(\R^N)$ by $\big\{u\in \cD^{1,2}_{\cO(N)}(\R^N): u(x)=0\hbox{ for }|x|\geq L\big\}$ for $L\to\infty$ is due to 
 Struwe \cite{StruweMA}. Observe that in all these works the radial symmetry plays an important role, since one gets the uniform decay at infinity of functions from $\cD^{1,2}_{\cO(N)}(\R^N)$ (see \cite{BerLions}[Radial Lemma A.III]) and the the compactness lemma of Strauss \cite{BerLions}[Lemma A.I] is applicable. In the nonradial setting these arguments are no longer available.\\
\indent Now we sketch our approach with a new and simple approximation $J_\eps$ of $J$. 
Let $g_{+}(s)=G_+'(s)$, $g_{-}(s):=g_{+}(s)-g(s)$ and $G_{-}(s):=G_{+}(s)-G(s)\geq 0$ for $s\in\R$.
In view of (g3), $G_{+}(u)\in L^{1}(\R^N)$ for $u\in\cD^{1,2}(\R^N)\subset L^{2^*}(\R^N)$, however
 $G_{-}(u)$ may not be integrable
unless $G_{-}(u)\leq c|u|^{2^*}$ for some $c>0$. In order to overcome this problem, for any $\eps\in [0,1)$ let us take an  even function $\vp_\eps:\R\to [0,1]$ such that $\vp_\eps(s)=\frac{1}{\eps^{2^*-1}}|s|^{2^*-1}$ for $|s|\leq \eps$, $\vp_{\eps}(s)=1$ for $|s|\geq \eps$. We introduce a new functional
\begin{equation}\label{eq:actionEPS}
J_\eps(u)=\frac12\int_{\R^N} |\nabla u|^2+\int_{\R^N}  G_{-}^\eps(u)\, dx-\int_{\R^N} G_{+}(u)\, dx,
\end{equation}
 such that $G_{-}^\eps(s)=\int_0^s \vp_\eps(t)g_-(t)\, dt$, $s\in\R$,
and now observe that $G_{-}^\eps(s)\leq c_\eps|s|^{2^*}$ for any $s\in\R$ and some constant $c_\eps>0$ depending on $\eps>0$.
 Hence, for $\eps\in (0,1)$,  $J_\eps$ is well-defined on $\cD^{1,2}(\R^N)$, continuous and $J_\eps'(u)(v)$ exists for any $u\in\cD^{1,2}(\R^N)$ and $v\in\cC_0^\infty(\R^N)$. Hence we call $u$ a {\em critical point} of $J_\eps$ provided that $J_\eps'(u)(v)=0$ for any $v\in\cC_0^\infty(\R^N)$. Next,
we show that any minimizing sequence of $J_\eps$ on the following Pohozaev manifold
\begin{equation}\label{def:PohEPS}
\cM_\eps=\Big\{u\in \cD^{1,2}(\R^N)\setminus\{0\}: \int_{\R^N}|\nabla u|^2=2^*\int_{\R^N}G_{+}(u)-G_{-}^\eps(u)\, dx\Big\}
\end{equation}
converges to a nontrivial critical point $u_\eps$ of $J_\eps$ up to a subsequence and up to a translation -- see Lemma \ref{lem:theta}. The last argument requires the following variant of the classical Lions' lemma \cite{Lions84}, \cite{Willem}[Lemma 1.21] applied to $\Psi=G_{+}$ satisfying \eqref{eq:Psi}.
\begin{Lem}\label{lem:Lions}
	Suppose that $(u_n)\subset \cD^{1,2}(\R^N)$ is bounded and for some $r>0$ 	
	\begin{equation}\label{eq:LionsCond11}
	\lim_{n\to\infty}\sup_{y\in \R^N} \int_{B(y,r)} |u_n|^2\,dx=0.
	\end{equation}
	Then  
	$$\int_{\R^N} \Psi(u_n)\, dx\to 0\quad\hbox{as } n\to\infty$$
	for any continuous $\Psi:\R\to [0,\infty)$ such that
	\begin{equation}
	\label{eq:Psi}
	\lim_{s\to 0} \frac{\Psi(s)}{|s|^{2^*}}=\lim_{|s|\to\infty} \frac{\Psi(s)}{|s|^{2^*}}=0.
	\end{equation}
\end{Lem}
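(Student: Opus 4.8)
The plan is to reduce the statement to a quantitative decay estimate for a truncation of $u_n$, and then to run a covering argument in the spirit of Lions; the new point is that the $H^1$-bound used in the classical proof must be replaced by an $H^1$-bound for an auxiliary truncated sequence, since functions in $\cD^{1,2}(\R^N)$ need not lie in $L^2$.

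\emph{Step 1 (reduction of $\Psi$).} Fix $\eps>0$. By \eqref{eq:Psi} there are $0<\eta<M$ with $\Psi(s)\le\eps|s|^{2^*}$ whenever $|s|\le\eta$ or $|s|\ge M$, while on the compact set $\{\eta\le|s|\le M\}$ the continuous function $\Psi$ is bounded by some $K=K(\eta,M)$. Since $|s|\ge\eta$ there, this yields the pointwise bound
\[
\Psi(s)\le\eps|s|^{2^*}+\frac{K}{\eta^2}\,|s|^2\,\chi_{\{\eta\le|s|\le M\}}(s),\qquad s\in\R.
\]
As $(u_n)$ is bounded in $\cD^{1,2}(\R^N)\hookrightarrow L^{2^*}(\R^N)$, integration gives $\int_{\R^N}\Psi(u_n)\le\eps\,C_0+\frac{K}{\eta^2}\int_{A_n}|u_n|^2$, where $A_n:=\{\eta\le|u_n|\le M\}$ and $C_0:=\sup_n\|u_n\|_{2^*}^{2^*}$. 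Since $\eps$ is arbitrary, it suffices to prove $\int_{A_n}|u_n|^2\to0$ for each fixed $0<\eta<M$.

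\emph{Step 2 (truncation into $H^1$).} Choose a fixed Lipschitz $\phi:\R\to\R$ with $\phi(s)=0$ for $|s|\le\eta/2$, $\phi(s)=s$ for $\eta\le|s|\le M$, $\phi$ constant for $|s|\ge M+1$, $|\phi|\le M+1$ and Lipschitz constant $L_0$, and set $v_n:=\phi(u_n)$. Then $v_n\in H^1(\R^N)$: indeed $\{v_n\neq0\}\subset\{|u_n|>\eta/2\}$ has measure at most $(\eta/2)^{-2^*}C_0$, so $\|v_n\|_{L^2}^2\le(M+1)^2(\eta/2)^{-2^*}C_0$, while $|\nabla v_n|\le L_0|\nabla u_n|$ gives a uniform bound on $\|\nabla v_n\|_{L^2}$. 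Crucially, $\|v_n\|_{H^1(\R^N)}$ is bounded uniformly in $n$, and $\int_{B(y,r)}|v_n|^2\le\int_{B(y,r)}|u_n|^2$ still vanishes uniformly in $y$ by \eqref{eq:LionsCond11}.

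\emph{Step 3 (covering and interpolation).} Cover $\R^N$ by balls $B_i=B(y_i,r)$ with overlap at most $\kappa=\kappa(N)$. Fix $p=\frac{4(2^*-1)}{2^*}\in(2,2^*)$ and $\lambda=2/p\in(0,1)$, so that $\frac1p=\frac{1-\lambda}{2}+\frac{\lambda}{2^*}$ and $\lambda p=2$. On each $B_i$, interpolation between $L^2$ and $L^{2^*}$ followed by the local Sobolev inequality $\|w\|_{L^{2^*}(B_i)}\le C_S\big(\|\nabla w\|_{L^2(B_i)}+r^{-1}\|w\|_{L^2(B_i)}\big)$ gives
\[
\|v_n\|_{L^p(B_i)}^p\le C\,\|v_n\|_{L^2(B_i)}^{(1-\lambda)p}\Big(\|\nabla v_n\|_{L^2(B_i)}^2+r^{-2}\|v_n\|_{L^2(B_i)}^2\Big),
\]
where the choice $\lambda p=2$ is exactly what turns the Sobolev factor into a summable quantity. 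Summing over $i$ and using the bounded overlap,
\[
\int_{\R^N}|v_n|^p\le\sum_i\|v_n\|_{L^p(B_i)}^p\le C\kappa\Big(\sup_{y}\int_{B(y,r)}|u_n|^2\Big)^{\frac{(1-\lambda)p}{2}}\Big(\|\nabla v_n\|_{L^2}^2+r^{-2}\|v_n\|_{L^2}^2\Big).
\]
The last parenthesis is bounded by Step 2 and the supremum tends to $0$, so $\int_{\R^N}|v_n|^p\to0$. Finally, on $A_n$ we have $v_n=u_n$ with $|u_n|\ge\eta$, hence $|u_n|^2\le\eta^{2-p}|u_n|^p=\eta^{2-p}|v_n|^p$, and therefore $\int_{A_n}|u_n|^2\le\eta^{2-p}\int_{\R^N}|v_n|^p\to0$, which by Step 1 completes the proof.

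\emph{Main obstacle.} The genuine difficulty, and the reason the classical Lions lemma does not apply verbatim, is that $\cD^{1,2}(\R^N)$-functions need not be square-integrable, so the global $L^2$-mass that one wishes to sum in the covering argument is a priori infinite. Step 2 is designed precisely to circumvent this: the Lipschitz truncation $\phi$ excises the small- and large-amplitude regions, confines the support to a set of finite measure controlled by the $L^{2^*}$-bound, and thereby produces an auxiliary sequence bounded in $H^1(\R^N)$ to which the finite-overlap summation applies.
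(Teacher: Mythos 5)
Your proof is correct, but it follows a genuinely different route from the paper's. The paper factors the lemma through an auxiliary equivalence (its Lemma \ref{lem:Conv}): it first shows, via weak compactness and Rellich's theorem, that \eqref{eq:LionsCond11} forces $u_n(\cdot+y_n)\weakto 0$ for every integer translation sequence, and then runs a Lions-type summation over unit cubes for the truncation $w_n$ ($w_n=|u_n|$ where $|u_n|>\delta$, $w_n=|u_n|^{2^*/2}\delta^{1-2^*/2}$ elsewhere, which is bounded in $H^1$), leaving the vanishing factor as an $L^p$-norm on a near-maximizing cube that is killed by local compact embedding. You instead use the $L^2$-smallness hypothesis \emph{directly and quantitatively}: your two-sided Lipschitz truncation $v_n=\phi(u_n)$ (cut off both below $\eta/2$ and above $M+1$) is bounded in $H^1$ because its support has finite measure by Chebyshev in $L^{2^*}$, and the exponent $p$ with $\lambda p=2$ is chosen precisely so that after interpolation the Sobolev factor sums to the global $H^1$-bound while the remaining factor is a power of $\sup_y\int_{B(y,r)}|u_n|^2$. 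This avoids any extraction of subsequences, weak limits, or Rellich compactness, so your argument is more elementary and self-contained for this particular statement; what the paper's detour buys is the stronger ``if and only if'' characterization in terms of weak convergence of translates (Lemma \ref{lem:Conv}), which it reuses elsewhere (e.g.\ in the profile decomposition). Two cosmetic points: to claim $\int_{B(y,r)}|v_n|^2\le\int_{B(y,r)}|u_n|^2$ you need $|\phi(s)|\le|s|$, which your parameters ($\phi(s)=s$ up to $|s|=M$ but $|\phi|\le M+1$) do not quite guarantee on $(M,M+1)$ — either take the plateau value equal to $M$, or settle for the harmless bound $|\phi(s)|\le L_0|s|$; and in Step 1 the set $A_n$ and the constant $K/\eta^2$ depend on $\eps$, so the conclusion should be phrased as $\limsup_n\int\Psi(u_n)\le\eps C_0$ for each $\eps$, as you implicitly do. Neither affects correctness.
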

Note that concentration-compactness arguments in the zero mass case have been conside\-red so far in more restrictive settings e.g. in \cite{ClappMaia}[Lemma 3.5] or \cite{BenForAzzAprile}[Lemma 2], where one has to require that $\Psi(s)\leq c\min\{|s|^p,|s|^q\}$ for some $2<p<2^*<q$ and constant $c>0$. Condition \eqref{eq:Psi} seems to be optimal and we prove Lemma \ref{lem:Lions} in Section \ref{sec:Lions}, see also Lemma \ref{lem:Conv}. \\
\indent Having found a critical point $u_\eps\in\cM_\eps$ of the approximated functional $J_\eps$, we let $\eps\to 0$ and passing to a subsequence we obtain a solution of \eqref{eq} in Theorem \ref{ThMain1}.  Next, repeating the similar arguments, we prove Theorem \ref{ThMain2} as well as Theorem \ref{ThMain3} (a) in the nonradial setting.  Note that this is a simpler approach in comparison to \cite{MederskiBL,JeanjeanLu} and it seems that we cannot argue directly as in these papers, since we do not require \eqref{eq:pmc} and \eqref{g3strong}, which are crucial for decompositions of Palais-Smale sequences in \cite{JeanjeanLu} and for the variant of Palais-Smale condition \cite{MederskiBL}[$(M)_\beta\,(i)$]. We expect that the approach presented in this paper based on minimization on a Pohozaev manifold with the compactness properties of Lemma \ref{lem:theta} as well as Lions' type results in the spirit of Lemma \ref{lem:Lions} allows to study nonlinear elliptic problems involving also different operators, e.g. \cite{MederskiSiemianowski}.\\
\indent In order to prove the multiplicity result in Theorem \ref{ThMain3} (b), we employ the critical point theory from \cite{MederskiBL}[Section 2]. Namely we observe that there is a homeomorphism $m:\cU\to\cM_\eps$ such that
$$\cU:=\Big\{u\in \cD^{1,2}(\R^N): \int_{\R^N} |\nabla u|^2\, dx=1 \hbox{ and }\int_{\R^N}G_{+}(u)-G_{-}^\eps(u)\, dx>0\Big\}.$$
We show that $J_\eps\circ m :\cU\to \R$ is still of class $\cC^1$. 
The advantage of working with $J_\eps\circ m$ is that $\cU$ is an open subset of a manifold of class $\cC^{1,1}$ and we can use a critical point theory based on the deformation lemma involving a Cauchy problem on $\cU$. This is not feasible on $\cM_\eps$, since $\cM_\eps$ need not be of class $\cC^{1,1}$. We show that $J_\eps\circ m$ satisfies the Palais-Smale condition in $\cU\cap \cD^{1,2}_{\cO_2}(\R^N)$ and we find an unbounded sequence of critical points. This requires a next approximation of $J_\eps$ described in Section \ref{sec:proof2}.
Similarly as above, letting $\eps\to 0$ we prove Theorem \ref{ThMain3} (b). Based on this work, under assumptions (g0)--(g3) one can obtain an unbounded sequence of radial solutions in $\cM\cap \cD^{1,2}_{\cO(N)}(\R^N)$ as well, which was considered already in \cite{BerLionsInfZero,StruweMA}. However this is a different approach, which does not require the radial lemma of Strauss \cite{Strauss,BerLions} -- details are left for the reader.

\section{Concentration-compactness in subspaces of $\cD^{1,2}(\R^N)$}\label{sec:Lions}

We prove the following result, which implies the variant of Lions's lemma in $\cD^{1,2}(\R^N)$.
 \begin{Lem}\label{lem:Conv}
 	Suppose that   $(u_n)\subset \D^{1,2}(\R^N)$ is bounded. Then $u_n(\cdot+y_n)\weakto 0$ in $\D^{1,2}(\R^N)$ for any $(y_n)\subset \Z^N$ if and only if
 	\begin{equation}\label{eq:PsiZero}
 	\int_{\R^N} \Psi(u_n)\, dx\to 0\quad\hbox{as } n\to\infty
 	\end{equation}
 	for any continuous $\Psi:\R\to [0,\infty)$ satisfying \eqref{eq:Psi}.
 \end{Lem}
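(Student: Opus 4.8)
The plan is to prove the two implications separately, after two preliminary reductions. First, using a unit-cube decomposition $Q_k:=k+[0,1)^N$ ($k\in\Z^N$), I would show that the translation condition is equivalent to the uniform decay of the local $L^2$-mass
\[
\eta_n:=\sup_{k\in\Z^N}\int_{Q_k}|u_n|^2\,dx\ \longrightarrow\ 0 .
\]
Second, I would reduce an arbitrary admissible weight to the model weights $\Psi(s)=\min\{|s|^p,|s|^q\}$ with $2+\tfrac4N\le q<2^*<p$. For the latter, given $\Psi$ and $\eps>0$, condition \eqref{eq:Psi} supplies $0<a<b$ with $\Psi(s)\le\eps|s|^{2^*}$ for $|s|\le a$ and $|s|\ge b$, while on $a\le|s|\le b$ one has $\Psi\le M$ and $\min\{|s|^p,|s|^q\}\ge c_0:=\min\{a^p,a^q\}>0$; hence $\Psi(s)\le\eps|s|^{2^*}+\tfrac{M}{c_0}\min\{|s|^p,|s|^q\}$ for all $s$. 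As $\|u_n\|_{L^{2^*}}$ is bounded by the Sobolev inequality, integrating this pointwise bound and letting $\eps\to0$ reduces everything to proving $\int_{\R^N}\min\{|u_n|^p,|u_n|^q\}\,dx\to0$.

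The implication \eqref{eq:PsiZero}$\Rightarrow$ vanishing of translates is soft. Fix $(y_n)\subset\Z^N$ and pass to a subsequence with $v_n:=u_n(\cdot+y_n)\weakto v$ in $\cD^{1,2}(\R^N)$; by Rellich--Kondrachov $v_n\to v$ in $L^2_{loc}$ and a.e. If $v\neq0$, take the admissible weight $\Psi_0(s)=\min\{|s|^p,|s|^q\}$, which is strictly positive off the origin; Fatou's lemma and translation invariance then give $0<\int\Psi_0(v)\le\liminf_n\int\Psi_0(v_n)=\liminf_n\int\Psi_0(u_n)=0$, a contradiction, so $v=0$. The same Rellich argument yields the equivalence with $\eta_n\to0$: if $\eta_n\not\to0$ one selects cubes $Q_{k_n}$ carrying mass $\ge c>0$ and translates by $y_n=k_n$ to produce a nonzero weak limit.

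The heart of the matter is $\int_{\R^N}\min\{|u_n|^p,|u_n|^q\}\,dx\to0$ under $\eta_n\to0$, and \textbf{the main obstacle} is that $\cD^{1,2}(\R^N)$ carries no global $L^2$-bound, so the classical $H^1$ argument of summing local interpolation inequalities over cubes fails (the residual term $\sum_k\int_{Q_k}|u_n|^2$ need not be finite). I would circumvent this by subtracting cube averages: set $a_k:=\int_{Q_k}u_n\,dx$ (the mean, since $|Q_k|=1$), so $|a_k|\le\eta_n^{1/2}$ is uniformly small by Cauchy--Schwarz, while the oscillation $\tilde u_n:=u_n-a_k$ on $Q_k$ satisfies the Sobolev--Poincar\'e inequality $\|\tilde u_n\|_{L^{2^*}(Q_k)}\le C\|\nabla u_n\|_{L^2(Q_k)}$ with \emph{no} $L^2$-term (note $\nabla\tilde u_n=\nabla u_n$). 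Interpolating $\int_{Q_k}|\tilde u_n|^q\le C\,\eta_n^{\,1-t}\big(\int_{Q_k}|\nabla u_n|^2\big)^{\frac{2^*}{2}t}$ with $t=\frac{q-2}{2^*-2}$, the choice $q\ge2+\frac4N$ forces $\frac{2^*}{2}t\ge1$, so summing over $k$ with the elementary inequality $\sum_kG_k^{\,s}\le(\sum_kG_k)^{s}$ ($s\ge1$, $G_k:=\int_{Q_k}|\nabla u_n|^2$) and $\sum_kG_k=\|\nabla u_n\|_{L^2}^2\le C$ gives $\int_{\R^N}|\tilde u_n|^q\le C\eta_n^{\,1-t}\to0$.

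It remains to transfer this to the two regimes. Since eventually $|a_k|\le\tfrac12$, on $\{|u_n|>1\}$ one has $|u_n|\le2|\tilde u_n|$, whence $\int_{\{|u_n|>1\}}|u_n|^q\le2^q\int_{\R^N}|\tilde u_n|^q\to0$. For the low part $\int_{\{|u_n|\le1\}}|u_n|^p$ with $p>2^*$, split $u_n=a_k+\tilde u_n$: the step-function contribution obeys $\sum_k|a_k|^p\le\eta_n^{(p-2^*)/2}\sum_k|a_k|^{2^*}\le C\eta_n^{(p-2^*)/2}\to0$ (using $|a_k|^{2^*}\le\int_{Q_k}|u_n|^{2^*}$ by Jensen and $\|u_n\|_{L^{2^*}}\le C$), while on the low set $|\tilde u_n|\le2$, so $|\tilde u_n|^p\le2^{p-q}|\tilde u_n|^q$ reduces this term to the already-controlled $\int|\tilde u_n|^q$. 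Adding the two regimes proves the claim. I expect the delicate point to be exactly this average/oscillation splitting together with the exponent choice $q\ge2+\frac4N$ that makes the gradient sum superadditive; the remaining ingredients are Rellich compactness, Fatou, and Hölder.
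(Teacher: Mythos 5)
Your proof is correct, and while its overall architecture (reduction to the vanishing of $\sup_{k\in\Z^N}\int_{Q_k}|u_n|^2$, a pointwise splitting of $\Psi$ into an $\eps|s|^{2^*}$ tail plus a controlled middle part, and a cube-by-cube estimate summed over $\Z^N$) matches the paper's, you resolve the key technical obstacle by a genuinely different device. The obstacle is the same in both arguments: a bounded sequence in $\cD^{1,2}(\R^N)$ has no global $L^2$ bound, so the classical $H^1$ interpolation-and-sum argument cannot be run directly. The paper fixes this by replacing $u_n$ with the auxiliary function $w_n$, equal to $|u_n|$ where $|u_n|>\delta$ and to $\delta^{1-2^*/2}|u_n|^{2^*/2}$ where $|u_n|\le\delta$; since $|w_n|^2\le\delta^{2-2^*}|u_n|^{2^*}$ everywhere, $(w_n)$ is bounded in $H^1(\R^N)$ and the standard local Gagliardo--Nirenberg inequality $\int_Q|w|^p\le C\|w\|^2_{H^1(Q)}(\int_Q|w|^p)^{1-2/p}$ can be summed over cubes with an arbitrary subcritical $p\in(2,2^*)$. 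You instead subtract the cube means $a_k$ and invoke the Sobolev--Poincar\'e inequality $\|u_n-a_k\|_{L^{2^*}(Q_k)}\le C\|\nabla u_n\|_{L^2(Q_k)}$, which has no $L^2$ term, then interpolate between $L^2$ and $L^{2^*}$; this forces the exponent restriction $q\ge 2+\tfrac{4}{N}$ so that the gradient powers are superadditive under summation, and it costs you the extra bookkeeping of the two regimes $\{|u_n|\le 1\}$, $\{|u_n|>1\}$ and of the step function $\sum_k|a_k|^p$ (all of which you handle correctly: $|a_k|\le\eta_n^{1/2}$ by Cauchy--Schwarz and $\sum_k|a_k|^{2^*}\le\|u_n\|_{2^*}^{2^*}$ by Jensen). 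The paper's truncation trick is shorter and keeps a general $\Psi$ throughout, whereas your mean-subtraction argument is perhaps more transparent about where the scale-invariance of $\cD^{1,2}$ enters and reduces everything to a single explicit model weight $\min\{|s|^p,|s|^q\}$; your soft direction via Fatou and translation invariance is also a touch cleaner than the paper's explicit construction $\Psi(s)=\min\{|s|^p,\eps^{p-q}|s|^q\}$. Both routes are sound.
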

 \begin{proof}
 	Let $(u_n)\subset \D^{1,2}(\R^N)$ be such that $u_n(\cdot+y_n)\weakto 0$ in $\D^{1,2}(\R^N)$ for any $(y_n)\subset \Z^N$.
 	Take any $\eps>0$ and $2<p<2^*$ and suppose that $\Psi$ satisfies \eqref{eq:Psi}. Then we find $0<\delta<M$ and $c_\eps>0$ such that 
 	\begin{eqnarray*}
 	\Psi(s)&\leq& \eps |s|^{2^*}\quad\hbox{ for }|s|\leq \delta,\\
 	\Psi(s)&\leq& \eps |s|^{2^*}\quad\hbox{ for }|s|>M,\\
 	\Psi(s)&\leq& c_\eps |s|^{p}\quad\hbox{ for }|s|\in (\delta,M].
 	\end{eqnarray*}
 	Let us define $w_n(x):=|u_n(x)|$ for $|u_n(x)|>\delta$ and $w_n(x):=|u_n(x)|^{2^*/2}\delta ^{1-2^*/2}$ for $|u_n(x)|\leq \delta$. Then
 	$(w_n)$ is bounded in $H^1(\R^N)$ and by the Sobolev inequality one has 
 	\begin{eqnarray*}
 		\int_{\Om+y}\Psi(u_n)\,dx&=&
 		\int_{(\Om+y)\cap\{\delta<|u_n|\leq M\}}\Psi(u_n)\,dx
 		+\int_{(\Om+y)\cap (\{|u_n|> M\}\cup \{|u_n|\leq \delta\})}\Psi(u_n)\,dx\\
 		&\leq &
 		c_\eps\int_{(\Om+y)\cap\{\delta<|u_n|\leq M\}}|w_n|^p\,dx
 		+\eps \int_{(\Om+y)\cap (\{|u_n|> M\}\cup \{|u_n|\leq \delta\})}|u_n|^{2^*}\,dx\\
 		&\leq &c_\eps C\Big(\int_{\Om+y}|\nabla w_n|^2+|w_n|^{2}\,dx \Big)\Big(\int_{\Om+y}|w_n|^{p}\,dx\Big)^{1-2/p}+\eps \int_{\Om+y}|u_n|^{2^*}\,dx,
 	\end{eqnarray*}
 	for every $y\in \R^N$, where $\Om=(0,1)^N$ and $C>0$ is a constant.
 	Then we sum the inequalities over $y\in\Z^N$ and we get  
 	$$\begin{aligned}
 	\int_{\R^N}\Psi(u_n)\,dx
 	&\leq c_\eps C\Big(\int_{\R^N}|\nabla w_n|^2+|w_n|^{2}\,dx \Big)\Big(\sup_{y\in\Z^N}\int_{\Om}|w_n(\cdot +y)|^{p}\,dx\Big)^{1-2/p}+\eps \int_{\R^N}|u_n|^{2^*}\,dx.
 	\end{aligned}$$
 	Let us take $(y_n)\subset\Z^N$ such that
 	$$\sup_{y\in\Z^N}\int_{\Om}|w_n(\cdot+y)|^{p}\,dx\leq 2\int_{\Om}|w_n(\cdot+y_n)|^{p}\,dx$$
 	for any $n\geq 1$.
 	Note that
 	$u_n(\cdot+y_n)\weakto 0$ in $\cD^{1,2}(\R^N)$ and passing to a subsequence  we obtain  $u_n(\cdot+y_n)\to 0$ in $L^p(\Om)$. Since $|w_n(x)|\leq |u_n(x)|$,  we infer that $w_n(\cdot+y_n)\to 0$ in $L^p(\Om)$. 
 	Therefore 
 	$$\limsup_{n\to\infty}\int_{\R^N}\Psi(u_n)\,dx\leq \eps \limsup_{n\to\infty}\int_{\R^N}|u_n|^{2^*}\,dx,$$
 	and since $\eps>0$ is arbitrary, we conclude \eqref{eq:PsiZero}.
 On the other hand, suppose that  $u_n(\cdot+y_n)$ does not converges to $0$ for some $(y_n)\subset\Z^N$ and \eqref{eq:PsiZero} holds. We may assume that $u_n(\cdot+y_n)\to u_0\neq 0$ in $L^p(\Om)$ for some bounded domain $\Om\subset\R^N$ and $2<p<2^*.$ Take any $\eps>0$, $q>2^*$ and $\Psi(s):=\min\{|s|^p,\eps^{p-q}|s|^q\}$ for $s\in\R$. Then
 \begin{eqnarray*}
 \int_{\R^N} \Psi(u_n)\, dx&\geq& \int_{\Om\cap \{|u_n|\geq \eps\}}|u_n|^p\, dx+\int_{\Om\cap \{|u_n|\leq \eps\}} \eps^{q-p}|u_n|^q\, dx\\
 &=& \int_{\Om} |u_n|^p\, dx+\int_{\Om\cap \{|u_n|\leq \eps\}} \eps^{p-q}|u_n|^q-|u_n|^p\, dx\\
 &\geq& \int_{\Om} |u_n|^p\, dx-2\eps^p |\Om|.\\
 \end{eqnarray*}
Thus we get $u_n\to 0$ in $L^p(\Om)$ and this contradicts $u_0\neq 0$.
 \end{proof}
 
 \begin{altproof}{Lemma \ref{lem:Lions}}
 	Suppose that there is $(y_n)\subset \Z^N$ such that $u_n(\cdot+y_n)$ does not converge weakly to $0$ in $\D^{1,2}(\R^N)$. Since  $u_n(\cdot+y_n)$ is bounded, then there is $u_0\neq 0$ such that, up to a subsequence,
 	$$u_n(\cdot+y_n)\weakto u_0$$
 	as $n\to\infty$. We find $y\in \R^N$ such that $u_0\chi_{B(y,r)}\neq 0$ in $L^2(B(y,r))$. 
 	Note that passing to a subsequence $u_n(\cdot+y_n)\to u_0$ in $L^2(B(y,r))$. Then, in view of \eqref{eq:LionsCond11}
 	$$\int_{B(y,r)} |u_n(\cdot+y_n)|^2\,dx=\int_{B(y_n+y,r)} |u_n|^2\,dx\to 0$$
 	as $n\to\infty$, which contradicts the fact $u_n(\cdot+y_n)\to u_0\neq 0$ in $L^2(B(y,r))$. Therefore $u_n(\cdot+y_n)\weakto 0$ in $\D^{1,2}(\R^N)$ for any $(y_n)\subset \Z^N$ and by Lemma \ref{lem:Conv} we conclude.
 \end{altproof}

Let us consider $x=(x^1,x^2,x^3)\in\R^N=\R^m\times\R^m\times \R^{N-2m}$ with $2\leq m\leq N/2$ 
such that $x^1,x^2\in\R^m$ and $x^3\in\R^{N-2m}$. Let $\cO_1=\cO(m)\times \cO(m)\times\id\subset \cO(N)$. Then for
$\cO_1$ invariant functions we get the following corollary, whose proof is postponed to Appendix and follows from Proposition \ref{PropLionsCompatib}.

\begin{Cor}\label{CorLions1}
	Suppose that $(u_n)\subset \cD^{1,2}_{\cO_1}(\R^N)$ is bounded, $r_0>0$ is such that for all $r\geq r_0$
	\begin{equation}\label{eq:LionsCond12_cor}
	\lim_{n\to\infty}\sup_{z\in \R^{N-2m}} \int_{B((0,0,z),r)} |u_n|^2\,dx=0.
	\end{equation}
	Then 
	$$\int_{\R^N} \Psi(u_n)\, dx\to 0\quad\hbox{as } n\to\infty$$
	for any continuous function $\Psi:\R\to [0,\infty)$ such that \eqref{eq:Psi} holds.
\end{Cor}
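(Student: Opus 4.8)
The plan is to reduce the statement to the non-symmetric Lions lemma, Lemma \ref{lem:Lions}, by promoting the localized vanishing \eqref{eq:LionsCond12_cor} near the axis $\{x^1=x^2=0\}$ to the full condition \eqref{eq:LionsCond11}. Writing $x=(x^1,x^2,x^3)\in\R^m\times\R^m\times\R^{N-2m}$, I would fix the radius $r=1$ and aim to show that
$$\lim_{n\to\infty}\sup_{y\in\R^N}\int_{B(y,1)}|u_n|^2\,dx=0,$$
after which Lemma \ref{lem:Lions} yields $\int_{\R^N}\Psi(u_n)\,dx\to 0$ for every $\Psi$ satisfying \eqref{eq:Psi}. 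The mechanism is that, since each $u_n$ is $\cO_1$-invariant, every $g\in\cO_1$ gives $\int_{B(gy,1)}|u_n|^2\,dx=\int_{B(y,1)}|u_n|^2\,dx$; and when $y$ is far from the axis its orbit under $\cO_1$ supports many pairwise disjoint unit balls, so a single one can carry only a vanishing share of the total mass.

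\emph{Far from the axis.} I would split the centers $y=(y^1,y^2,y^3)$ according to $\rho(y):=\max\{|y^1|,|y^2|\}$ and first treat $\rho(y)=|y^1|=:\rho_1$ large (the case $\rho(y)=|y^2|$ being symmetric). On the sphere $\{w\in\R^m:|w|=\rho_1\}$ one can place $k\gtrsim \rho_1^{\,m-1}$ points pairwise at distance at least $2$; rotating $y^1$ to these points by elements of $\cO_1$ acting only on the first factor produces pairwise disjoint balls $B(g_1y,1),\dots,B(g_ky,1)$, all carrying the same mass by invariance. Their union $A$ has measure $|A|=k\,|B(0,1)|$, so H\"older's inequality together with the Sobolev embedding $\cD^{1,2}(\R^N)\hookrightarrow L^{2^*}(\R^N)$ gives
$$\int_{B(y,1)}|u_n|^2\,dx=\frac1k\int_A|u_n|^2\,dx\leq \frac1k\,|A|^{2/N}\|u_n\|_{L^{2^*}}^2\leq C\,k^{2/N-1},$$
with $C$ independent of $n$ because $(u_n)$ is bounded in $\cD^{1,2}(\R^N)$. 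Since $m\geq 2$ forces $k\to\infty$ as $\rho_1\to\infty$, while $N\geq 3$ gives $2/N-1<0$, the right-hand side tends to $0$; hence for every $\eps>0$ there is $R$ with $\sup_n\sup_{\rho(y)\geq R}\int_{B(y,1)}|u_n|^2\,dx<\eps$.

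\emph{Near the axis and conclusion.} For the remaining centers, $\rho(y)<R$ gives $|(y^1,y^2,0)|<\sqrt2\,R$, so $B(y,1)\subset B((0,0,y^3),\sqrt2\,R+1)$. Enlarging $R$ if necessary so that also $r':=\sqrt2\,R+1\geq r_0$, the hypothesis \eqref{eq:LionsCond12_cor} applied with radius $r'$ yields
$$\sup_{\rho(y)<R}\int_{B(y,1)}|u_n|^2\,dx\leq \sup_{z\in\R^{N-2m}}\int_{B((0,0,z),r')}|u_n|^2\,dx\longrightarrow 0\quad(n\to\infty).$$
Combining the two regions gives $\limsup_{n\to\infty}\sup_{y}\int_{B(y,1)}|u_n|^2\,dx\leq\eps$, and letting $\eps\to0$ establishes \eqref{eq:LionsCond11}, whence Lemma \ref{lem:Lions} concludes. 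The main point to get right is the quantitative packing estimate on the orbit and the use of $L^{2^*}$ (rather than $L^2$, which is unavailable in $\cD^{1,2}(\R^N)$) to bound the mass on $A$: it is precisely the competition between the orbit cardinality $k\sim\rho_1^{\,m-1}$ and the measure growth $|A|^{2/N}\sim k^{2/N}$, won because $m\geq2$ and $N\geq3$, that drives the whole argument.
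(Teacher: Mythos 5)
Your proof is correct and follows essentially the same route as the paper: the paper proves Corollary \ref{CorLions1} via Proposition \ref{PropLionsCompatib} (a contradiction argument using the abstract ``compatibility'' of $\R^{2m}$ with $\cO(m)\times\cO(m)$, i.e.\ that the orbit of a point far from the axis supports arbitrarily many disjoint balls of equal mass, which is incompatible with boundedness in $L^{2^*}$), and then invokes Lemma \ref{lem:Lions} exactly as you do. Your version merely makes the packing count $k\gtrsim\rho_1^{m-1}$ explicit and runs the estimate directly rather than by contradiction.
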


\section{Proofs of Theorem \ref{ThMain1} and Corollary \ref{th:cubicquintic}}\label{sec:proof}

We prove the following Pohozaev type result using a truncation argument due to Kavain, cf. \cite{Struwe}[Lemma 3.5] and \cite{Willem}[Theorem B.3].
\begin{Prop}\label{PropPohozaev} Let $u\in\D^{1,2}(\R^N)$ be a weak solution of \eqref{eq}.
	Then $u\in W^{2,q}_{loc}(\R^N)$ for any $q<+\infty$,  and
	\begin{equation}
	\int_{\R^N}|\nabla u|^2\, dx=2^*\int_{\R^N}G(u)\,dx
	\end{equation}
	provided that $G_{-}(u)\in L^1(\R^N)$.
\end{Prop}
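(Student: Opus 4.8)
The plan is to first upgrade the regularity of $u$, and then to derive \eqref{eq:Poho} by testing \eqref{eq} against the infinitesimal generator of dilations $x\cdot\nabla u$, controlling the terms at infinity by a truncation in the spirit of Kavian.

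First I would show $u\in W^{2,q}_{\mathrm{loc}}(\R^N)$ for every $q<\infty$. Since $g$ is continuous with $g(0)=0$ and, by (g3), $\limsup_{|s|\to\infty}|g(s)|/|s|^{2^*-1}<\infty$, one has the global bound $|g(s)|\le C(1+|s|^{2^*-1})$, and hence $|g(u)|\le a(x)(1+|u|)$ with $a:=C(1+|u|^{2^*-2})$. Because $(2^*-2)\tfrac{N}{2}=2^*$ and $u\in L^{2^*}(\R^N)$, we have $a\in L^{N/2}_{\mathrm{loc}}(\R^N)$, so the Brezis--Kato theorem yields $u\in L^q_{\mathrm{loc}}$ for all $q<\infty$. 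Feeding this back, $g(u)\in L^q_{\mathrm{loc}}$ for all $q$, and Calder\'on--Zygmund estimates for $-\Delta u=g(u)$ give $u\in W^{2,q}_{\mathrm{loc}}$, so in particular the equation holds a.e. and $u$ is locally bounded. The point of invoking Brezis--Kato is that the direct estimate $g(u)\in L^{(2^*)'}_{\mathrm{loc}}$ returns only $u\in W^{2,(2^*)'}_{\mathrm{loc}}\hookrightarrow L^{2^*}_{\mathrm{loc}}$, i.e.\ no gain, so one must break the critical-exponent barrier.

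Next I would record the integrability making both sides of \eqref{eq:Poho} finite: from (g1) and (g3) one gets $G_+(s)\le C|s|^{2^*}$, whence $G_+(u)\in L^1(\R^N)$ since $u\in L^{2^*}$, while $G_-(u)\in L^1$ by hypothesis; thus $G(u)=G_+(u)-G_-(u)\in L^1(\R^N)$, and of course $|\nabla u|^2\in L^1$. For the identity itself I would multiply $-\Delta u=g(u)$ by $x\cdot\nabla u$ and integrate over a ball $B_R$ (legitimate, since on $B_R$ one has $u\in W^{2,q}\cap L^\infty$ and $x\cdot\nabla u\in W^{1,q}$). Integrating by parts and using the chain rule $\nabla G(u)=g(u)\nabla u$, the right side $\int_{B_R}g(u)\,x\cdot\nabla u=\int_{B_R}x\cdot\nabla G(u)$ becomes $R\int_{\partial B_R}G(u)\,dS-N\int_{B_R}G(u)$, while the left side becomes $(1-\tfrac{N}{2})\int_{B_R}|\nabla u|^2$ plus boundary contributions bounded by $R\int_{\partial B_R}|\nabla u|^2\,dS$.

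The main obstacle, and the heart of the matter, is that $x\cdot\nabla u$ is not an admissible global test function and $u$ need not decay, so the boundary terms on $\partial B_R$ must be shown to vanish along a suitable sequence. Here I would use the elementary selection principle: setting $h:=|\nabla u|^2+|G(u)|\in L^1(\R^N)$, one has $\int_0^\infty\big(\int_{\partial B_r}h\,dS\big)\,dr<\infty$, so $R\mapsto R\int_{\partial B_R}h\,dS$ cannot stay bounded below by a positive constant for large $R$ (otherwise $\int^\infty \tfrac{dR}{R}=\infty$ would force $h\notin L^1$); hence there is $R_n\to\infty$ with $R_n\int_{\partial B_{R_n}}h\,dS\to0$. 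Along $(R_n)$ every boundary term vanishes, and letting $n\to\infty$ yields $(\tfrac{N}{2}-1)\int_{\R^N}|\nabla u|^2=N\int_{\R^N}G(u)$, that is $\int_{\R^N}|\nabla u|^2=2^*\int_{\R^N}G(u)$. Equivalently one may test with $\psi(x/R_n)\,x\cdot\nabla u$ for a fixed cutoff $\psi$, which is precisely Kavian's truncation and produces the same vanishing tail integrals $\int_{R_n\le|x|\le 2R_n}(|\nabla u|^2+|G(u)|)\,dx\to0$.
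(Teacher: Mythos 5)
Your proof is correct and follows essentially the same route as the paper: Brezis--Kato to get $u\in W^{2,q}_{loc}(\R^N)$, then the Pohozaev computation with a Kavian-type truncation, using $|\nabla u|^2,\,G(u)\in L^1(\R^N)$ to kill the error terms. The only cosmetic difference is that the paper uses a smooth cutoff $\varphi(|x|^2/n^2)$ together with dominated convergence, whereas you integrate over sharp balls and select radii $R_n$ with $R_n\int_{\partial B_{R_n}}(|\nabla u|^2+|G(u)|)\,dS\to 0$; the two devices are standard and interchangeable here.
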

\begin{proof}
Since
	$$|g(u)|\leq c(1+|u|^{2^*-1})$$ 
	for $u\in\R$ and for some constant $c>0$, by Brezis and Kato theorem \cite{BrezisKato} we infer that $u\in W^{2,q}_{loc}(\R^N)$ for any $q<+\infty$.
	Let $\vp\in \cC^\infty_0(\R)$ be such that $0\leq \vp\leq 1$, $\vp(r)=1$ for $r\leq 1$ and $\vp(r)=0$ for $r\geq 2$. Similarly as in \cite{Willem}[Theorem B.3] we define $\vp_n\in \cC^\infty_0(\R^N)$ by the following formula
	$$\vp_n(x)=\vp\Big(\frac{|x|^2}{n^2}\Big).$$ 
	Then there exists $C>0$ such that
	$$\vp_n(x)\leq C,\hbox{ and }|x||\nabla \vp_n(x)|\leq C$$
	for every $n$ and $x\in\R^N$. Recall that 
	\begin{eqnarray*}
		\Delta u \vp_n\langle x, \nabla u\rangle &=& \div\Big(\vp_n(\nabla u\langle x,\nabla u\rangle-x\frac{|\nabla u|^2}{2})\Big)+\frac{N-2}{2}\vp_n|\nabla u|^2\\
		&&-\langle \nabla\vp_n,\nabla u\rangle\langle x,\nabla u\rangle+\langle\nabla\vp_n,x\rangle\frac{|\nabla u|^2}{2}.
	\end{eqnarray*}
	Then by the divergence theorem it is standard to show that
	\begin{eqnarray*}
		\frac{N-2}{2}\int_{\R^N}\vp_n|\nabla u|^2\,dx&=&
		\int_{\R^N}-\langle \nabla\vp_n,\nabla u\rangle\langle x,\nabla u\rangle+\langle\nabla\vp_n,x\rangle\frac{|\nabla u|^2}{2}\, dx\\
		&&+N\int_{\R^N}\vp_n G(u)\, dx 
	   +\int_{\R^N} \langle\nabla\vp_n,x\rangle G(u)\, dx.
	\end{eqnarray*}
	Since $\langle \nabla\vp_n, x\rangle$ is bounded,  $\langle \nabla\vp_n, x\rangle\to 0$ as $n\to\infty$ and $G(u)\in L^1(\R^N)$, then by the Lebesgue dominated convergence theorem  we get 
	$$\int_{\R^N}-\langle \nabla\vp_n,\nabla u\rangle\langle x,\nabla u\rangle+\langle\nabla\vp_n,x\rangle\frac{|\nabla u|^2}{2}\, dx
	+\int_{\R^N} \langle\nabla\vp_n,x\rangle G(u)\, dx\to 0$$
as $n\to\infty$. Since $\vp_n(x)\to 1$
	and we get the required equality. 
\end{proof}

Let $X:=\D^{1,2}(\R^N)$, $\eps>0$ and we set $G_\eps(s):=G_{+}(s)- G_{-}^\eps(s)$, $g_\eps(s):=G_\eps'(s)$ for $s\in\R$. Note that $J_\eps$ is  continuous and $J_\eps'(u)(v)$ exists for any $u\in X$ and $v\in\cC_0^\infty(\R^N)$. 
Moreover let
	\begin{eqnarray*}
		\cM_\eps&:=&\Big\{u\in X\setminus\{0\}: \int_{\R^N}|\nabla u|^2-2^*\int_{\R^N}G_\eps(u)\, dx=0\Big\},\\
		\cS&:=&\Big\{u\in X: \|u\|=1\Big\},\;
		\cP:=\Big\{u\in X: \int_{\R^N}G_\eps(u)\, dx>0\Big\},\\
		\cU&:=&\cS\cap\cP.
	\end{eqnarray*}
\begin{Prop}\label{prop:defMPSU}
The following holds for $\eps>0$.\\
	(i) $\cP$ is open and nonempty. Moreover there is a map $m_\cP:\cP\to \cM_\eps$ such that $m_\cP(u)=u(r\cdot)\in\cM_\eps$ with 
	\begin{equation}\label{eq:defOfR}
	r=r(u)=\frac{\Big(2^*\int_{\R^N}G_\eps(u)\, dx\Big)^{1/2}}{\|u\|}>0.
	\end{equation}
	(ii)  $m:=m_\cP|_{\cU}:\cU\to \cM_\eps$ is a homeomorphism with the inverse $m^{-1}(u)=u(\|u\|^{\frac{2}{N-2}} \cdot)$, $J_\eps\circ m_{\cP}:\cP\to \R$ is continuous and
	\begin{eqnarray*}
	(J_\eps\circ m_\cP)'(u)(v)&=&J'_\eps(m_\cP(u))(v(r(u)\cdot)\\
	&=&r(u)^{2-N}\int_{\R^N}\langle \nabla u,  \nabla v\rangle\,dx-r(u)^{-N}\int_{\R^N}g_\eps(u)v\, dx
	\end{eqnarray*}
	for $u\in\cP$ and $v\in \cC_0^{\infty}(\R^N)$.\\
	(iii) $J_\eps$ is coercive on $\cM_\eps$, i.e. for $(u_n)\subset \cM_\eps$, $J_\eps(u_n)\to\infty$ as $\|u_n\|\to\infty$,  and
	\begin{equation}\label{eq:infJM}
	c_\eps:=\inf_{\cM_\eps} J_\eps=\inf_{\cU} J_\eps\circ m>0.
	\end{equation}
	(iv) If $u_n\to u$, $u_n\in \cU$ and $u\in\partial\cU$, where the boundary of $\cU$ is taken in $\cS$, then $(J_\eps\circ m)(u)\to\infty$ as $n\to\infty$.
\end{Prop}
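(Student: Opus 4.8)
The plan is to reduce all four assertions to the behaviour of $J_\eps$ and of the functional $\Phi(u):=\int_{\R^N}G_\eps(u)\,dx$ under the dilation action $u\mapsto u_\la:=u(\la\cdot)$, $\la>0$. First I record the scaling identities $\|u_\la\|^2=\la^{2-N}\|u\|^2$ and $\Phi(u_\la)=\la^{-N}\Phi(u)$, so that along each dilation fibre
$$h_u(\la):=J_\eps(u_\la)=\tfrac12\la^{2-N}\|u\|^2-\la^{-N}\Phi(u).$$
Conditions (g1), (g3) force $G_+(s)\le C|s|^{2^*}$, while $G_-^\eps(s)\le c_\eps|s|^{2^*}$ by construction, hence $|G_\eps(s)|\le C_\eps|s|^{2^*}$; together with $X\hookrightarrow L^{2^*}(\R^N)$ this makes $\Phi$ continuous on $X$ by the standard continuity of the Nemytskii operator $L^{2^*}\to L^1$, and (differentiating under the integral against test functions, using $|g_\eps(s)|\le C_\eps(1+|s|^{2^*-1})$) Gâteaux differentiable in each direction $v\in\cC_0^\infty(\R^N)$ with $\Phi'(u)(v)=\int_{\R^N}g_\eps(u)v\,dx$. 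This continuity is the backbone of the whole proof.

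For (i), $\cP=\Phi^{-1}((0,\infty))$ is open by continuity of $\Phi$; it is nonempty because $G_\eps\ge G$ (since $G_-^\eps\le G_-$), so (g2) gives $G_\eps(\xi_0)\ge G(\xi_0)>0$ and a plateau function equal to $\xi_0$ on a large ball has $\Phi>0$. The fibre map $h_u$ has a unique critical point on $(0,\infty)$, a maximum, at the positive root $r(u)$ of $\la^2\|u\|^2=2^*\Phi(u)$, which is exactly the stated $r$; a direct check shows $u_{r(u)}\in\cM_\eps$, so $m_\cP$ is well defined. For (ii) I verify the inverse formula by a normalisation computation: for $v\in\cM_\eps$ one has $\Phi(v)=\|v\|^2/2^*$, and with $\mu:=\|v\|^{2/(N-2)}$ the function $w:=v(\mu\cdot)$ satisfies $\|w\|=1$ and $\Phi(w)>0$, so $w\in\cU$, while $r(w)=\mu^{-1}$ yields $m(w)=v$; conversely $m^{-1}(m(u))=u$. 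Continuity of $m$ and $m^{-1}$ then follows from continuity of $u\mapsto r(u)$ on $\cU$ and joint continuity of the dilation action $(u,\la)\mapsto u_\la$ on $X\times(0,\infty)$ (clear on $\cC_0^\infty$, extended by density using $\|u_\la\|=\la^{(2-N)/2}\|u\|$).

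The core of the proposition, and the step I expect to be the main obstacle, is the derivative formula. I would write $J_\eps(m_\cP(u))=F(u,r(u))$ with $F(u,\la):=\tfrac12\la^{2-N}\|u\|^2-\la^{-N}\Phi(u)$ and differentiate by the chain rule in direction $v\in\cC_0^\infty(\R^N)$, obtaining $\partial_uF(u,r(u))[v]+\partial_\la F(u,r(u))\,r'(u)[v]$. The decisive point is that the second term vanishes: $u_{r(u)}\in\cM_\eps$ means precisely that $h_u'(r(u))=\partial_\la F(u,r(u))=0$, i.e. the Pohozaev manifold is the set of fibrewise critical points. Hence only
$$\partial_uF(u,r(u))[v]=r(u)^{2-N}\int_{\R^N}\langle\nabla u,\nabla v\rangle\,dx-r(u)^{-N}\int_{\R^N}g_\eps(u)v\,dx$$
survives, and a change of variables identifies this with $J_\eps'(m_\cP(u))(v(r(u)\cdot))$. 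The delicate issues are the Gâteaux differentiability of $r(\cdot)$ (finiteness of $r'(u)[v]$, which holds since $\Phi$ is differentiable and $\Phi(u)>0$ on $\cP$) and differentiation under the integral sign; once these are secured, the vanishing of $\partial_\la F$ makes the formula clean.

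Finally, (iii) and (iv) are immediate consequences. On $\cM_\eps$ one has $\Phi(u)=\|u\|^2/2^*$, whence $J_\eps(u)=(\tfrac12-\tfrac1{2^*})\|u\|^2=\tfrac1N\|u\|^2$, giving coercivity at once; the bijection $m$ yields $c_\eps=\inf_{\cM_\eps}J_\eps=\inf_\cU J_\eps\circ m$, and $c_\eps>0$ because $\|u\|^2=2^*\Phi(u)\le 2^*\!\int_{\R^N}G_+(u)\,dx\le C\|u\|^{2^*}$ forces $\|u\|\ge\text{const}>0$ on $\cM_\eps$ via the Sobolev inequality. For (iv) (reading the conclusion as $(J_\eps\circ m)(u_n)\to\infty$), the explicit formula $(J_\eps\circ m)(u)=\tfrac1N r(u)^{2-N}=\tfrac1N(2^*\Phi(u))^{-(N-2)/2}$, valid on $\cU$, does everything: if $u_n\to u$ in $\cS$ with $u\in\partial\cU$, then $\Phi(u_n)\to\Phi(u)=0$ with $\Phi(u_n)>0$, and since the exponent $-(N-2)/2$ is negative the right-hand side blows up.
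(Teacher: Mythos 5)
Your proof is correct and follows exactly the route the paper intends: the paper itself only cites \cite{BerLions} and \cite{MederskiBL} for these verifications, and the standard argument there is precisely your dilation-fibering computation (the map $m_\cP$ as the fibrewise maximizer of $h_u$, the vanishing of $\partial_\la F$ on $\cM_\eps$ giving the derivative formula, and $J_\eps=\frac1N\|\cdot\|^2$ on $\cM_\eps$ giving coercivity and $c_\eps>0$ via Sobolev). No gaps.
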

\begin{proof}
Similarly as in \cite{BerLions}[page 325] or in \cite{MederskiBL}[Remark 4.2] we check that $\cP\neq\emptyset$. Next, we easily verify  (i)--(iv), e.g.  arguing as in the positive mass case in \cite{MederskiBL}[Proposition 4.1].
\end{proof}

The following lemma is crucial and allows to avoid  the analysis of decompositions of Palais-Smale sequences required in \cite{MederskiBL,JeanjeanLu}.

\begin{Lem}\label{lem:theta}
	Suppose that $(u_n)\subset \cM_\eps$, $J_\eps(u_n)\to c_\eps$ and
	$$u_n\weakto\tu\neq 0\hbox{ in } \cD^{1,2}(\R^N),\;u_n(x)\to\tu(x)\hbox{ for a.e. }x\in\R^N$$ for some  $\tu\in X$. Then $u_n\to\tu$, $\tu$  is a critical point of $J_\eps$ and $J_\eps(\tu)=c_\eps$.
\end{Lem}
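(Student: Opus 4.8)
The plan is to exploit that on $\cM_\eps$ the energy reduces to a multiple of the squared norm, so that boundedness of a minimizing sequence is automatic and the only real issue is to prove that no mass escapes, i.e. that the weak limit $\tu$ already lies on $\cM_\eps$ and inherits the full norm. Throughout set $v_n:=u_n-\tu\weakto 0$ (with $v_n\to 0$ a.e.\ after passing to a subsequence) and $P(u):=\|u\|^2-2^*\int_{\R^N}G_\eps(u)\,dx$, so that $\cM_\eps=\{u\neq 0:P(u)=0\}$ and, by Proposition \ref{prop:defMPSU}(iii), $J_\eps(u)=(\tfrac12-\tfrac1{2^*})\|u\|^2$ on $\cM_\eps$. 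From $J_\eps(u_n)\to c_\eps$ I first obtain $\|u_n\|^2\to A:=c_\eps(\tfrac12-\tfrac1{2^*})^{-1}>0$, so $(u_n)$ is bounded. The Hilbert-space identity gives $\|u_n\|^2=\|\tu\|^2+\|v_n\|^2+o(1)$, and together with the Brezis--Lieb splitting
\[
\int_{\R^N}G_\eps(u_n)\,dx=\int_{\R^N}G_\eps(\tu)\,dx+\int_{\R^N}G_\eps(v_n)\,dx+o(1)
\]
and $P(u_n)=0$ this yields the basic relation $P(\tu)+P(v_n)=o(1)$.

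Establishing this splitting is the \emph{main obstacle}. I would prove it for $G_+$ and $G_-^\eps$ separately. For $G_-^\eps$ the truncation $\vp_\eps$ is designed so that $g_-^\eps(s)=\vp_\eps(s)g_-(s)$ satisfies $|g_-^\eps(s)|\le C_\eps|s|^{2^*-1}$ for all $s$ (near $0$ the factor $|s/\eps|^{2^*-1}$ absorbs the bounded $g_-$, and at infinity one uses (g3)); the mean value theorem and Young's inequality then give $|G_-^\eps(a+b)-G_-^\eps(a)|\le\eta|a|^{2^*}+C_{\eta,\eps}|b|^{2^*}$ for every $\eta>0$, which is exactly the hypothesis of the generalized Brezis--Lieb lemma for sequences bounded in $L^{2^*}(\R^N)$ and convergent a.e. For $G_+$ one has only the critical bound together with (g1), (g3), i.e.\ condition \eqref{eq:Psi}, and no control of $v_n$ in any $L^p$ with $p<2^*$; here the required splitting is the Brezis--Lieb counterpart of Lemma \ref{lem:Conv}, proved by the same three-region ($|s|\le\delta$, $\delta<|s|\le M$, $|s|>M$) truncation together with a.e.\ convergence and dominated convergence on the middle, finite-measure region. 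Summing the two gives the displayed identity for $G_\eps$.

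Given $P(\tu)+P(v_n)=o(1)$, I rule out $P(\tu)\neq 0$ by a scaling comparison with $c_\eps$. If $P(\tu)<0$, then $\int G_\eps(\tu)>0$, so $\tu\in\cP$ and $m_\cP(\tu)=\tu(r\cdot)\in\cM_\eps$ with $r=r(\tu)>1$; hence $c_\eps\le J_\eps(m_\cP(\tu))=(\tfrac12-\tfrac1{2^*})r^{2-N}\|\tu\|^2<(\tfrac12-\tfrac1{2^*})\|\tu\|^2\le(\tfrac12-\tfrac1{2^*})A=c_\eps$, a contradiction. If $P(\tu)>0$, then $P(v_n)<0$ for large $n$, so $v_n\in\cP$ with $r(v_n)>1$ and $\|v_n\|^2=A-\|\tu\|^2+o(1)$; the same scaling gives $c_\eps\le J_\eps(m_\cP(v_n))<(\tfrac12-\tfrac1{2^*})\|v_n\|^2\to c_\eps-(\tfrac12-\tfrac1{2^*})\|\tu\|^2<c_\eps$, again a contradiction since $\|\tu\|>0$. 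Hence $P(\tu)=0$, i.e.\ $\tu\in\cM_\eps$, and then $c_\eps\le J_\eps(\tu)=(\tfrac12-\tfrac1{2^*})\|\tu\|^2\le(\tfrac12-\tfrac1{2^*})A=c_\eps$. Thus $\|\tu\|^2=A=\lim\|u_n\|^2$, which with $u_n\weakto\tu$ forces $u_n\to\tu$ in $\cD^{1,2}(\R^N)$, and $J_\eps(\tu)=c_\eps$.

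It remains to see that $\tu$ is a critical point. Since $\tu$ minimizes $J_\eps$ on $\cM_\eps$, the point $w:=m^{-1}(\tu)\in\cU$ minimizes the $\cC^1$ functional $J_\eps\circ m$ over the open subset $\cU$ of the sphere $\cS$; Proposition \ref{prop:defMPSU}(iv) excludes the boundary, so $w$ is an interior constrained critical point and there is a Lagrange multiplier $\mu$ with $(J_\eps\circ m)'(w)=\mu\langle w,\cdot\rangle$. Inserting the derivative formula of Proposition \ref{prop:defMPSU}(ii) into this identity shows that $\tu$ solves $-\Delta\tu=\lambda g_\eps(\tu)$ for some $\lambda=(1+\mu r^{N-2})^{-1}$; applying the Pohozaev identity of Proposition \ref{PropPohozaev} (applicable since the negative part $G_-^\eps(\tu)\le c_\eps|\tu|^{2^*}$ is integrable) to this equation gives $\|\tu\|^2=2^*\lambda\int_{\R^N}G_\eps(\tu)\,dx$, while $\tu\in\cM_\eps$ gives $\|\tu\|^2=2^*\int_{\R^N}G_\eps(\tu)\,dx$; since $\int_{\R^N}G_\eps(\tu)\,dx=\tfrac1{2^*}\|\tu\|^2>0$ this forces $\lambda=1$, hence $\mu=0$ and $J_\eps'(\tu)=0$.
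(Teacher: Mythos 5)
Your first half is correct but follows a genuinely different route from the paper, and your criticality step has a real gap. The paper works in the opposite order: it first shows $\tu$ is critical, by differentiating $t\mapsto\int_{\R^N}G_\eps(u_n+tv)\,dx$ for $v\in\cC_0^\infty(\R^N)$ (Vitali applies because $G_\eps(u+v)-G_\eps(u)\le\delta|u|^{2^*}+c_\delta|v|^{2^*}$ and $v$ has compact support) and passing to the limit in $J_\eps(m_\cP(u_n+tv))\ge c_\eps$; only afterwards does Pohozaev give $\tu\in\cM_\eps$, and weak lower semicontinuity of the norm gives $J_\eps(\tu)=c_\eps$ and $u_n\to\tu$. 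Your route instead settles $\tu\in\cM_\eps$ first via a Brezis--Lieb dichotomy; the scaling comparisons with $c_\eps$ are fine, but the whole step hinges on the splitting $\int G_+(u_n)=\int G_+(\tu)+\int G_+(v_n)+o(1)$ for a $G_+$ satisfying only \eqref{eq:Psi}. This is true but is \emph{not} covered by Lemma \ref{lem:BrezisLieb}(a) (which needs $|\Psi'(s)|\le C|s|^{2^*-1}$, false for $g_+$ near $0$ under (g0)--(g3)), and your sketch skips the one delicate point: $\{|\tu|<\delta\}$ has infinite measure, so uniform continuity of $G_+$ on compacta does not by itself control $\int_{\{|\tu|<\delta\}}|G_+(\tu+v_n)-G_+(v_n)|\,dx$; one must split further according to the size of $|v_n|$ and use that $|\{|v_n|\ge\delta'\}|\le\|v_n\|_{L^{2^*}}^{2^*}/(\delta')^{2^*}$ uniformly in $n$. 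This is fixable, but it is the real content of that step, and the paper's Vitali argument is designed precisely to avoid needing it.

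The genuine gap is in the Lagrange multiplier argument. Proposition \ref{prop:defMPSU}(ii) only provides directional derivatives of $J_\eps\circ m_\cP$ along $v\in\cC_0^\infty(\R^N)$; differentiating in the normal direction $w$ itself would require $\int_{\R^N}g_+(w)w\,dx$ to converge, which is not guaranteed ($|g_+(s)s|$ can behave like $|s|$ near $0$, not integrable for a general $\cD^{1,2}$ function), so the abstract constrained critical point theorem on $\cS$ is not available. Moreover, even granting a multiplier $\mu$, your formula $\lambda=(1+\mu r^{N-2})^{-1}$ breaks down when $1+\mu r^{N-2}=0$: the Euler equation then degenerates to $g_\eps(\tu)=0$ a.e.\ and Pohozaev yields nothing; excluding exactly this degeneracy is why the paper introduces the extra perturbation $G_{(\eps,\lambda)}$ and the $\theta\neq 0$ analysis in Section \ref{sec:proof2}. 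The repair is simple and needs no multiplier at all: once you know $\tu\in\cM_\eps$ and $J_\eps(\tu)=c_\eps$, you have $m_\cP(\tu)=\tu$ and $J_\eps\circ m_\cP\ge c_\eps$ on the \emph{open} set $\cP\subset\cD^{1,2}(\R^N)$ with equality at $\tu$, so $(J_\eps\circ m_\cP)'(\tu)(v)=0$ for all $v\in\cC_0^\infty(\R^N)$, and Proposition \ref{prop:defMPSU}(ii) with $r(\tu)=1$ gives exactly $J_\eps'(\tu)(v)=0$. Alternatively, adopt the paper's difference-quotient computation, which has the further advantage of establishing criticality before knowing $\tu\in\cM_\eps$ and hence of dispensing with the Brezis--Lieb splitting entirely.
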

\begin{proof}
Note that for any $\delta>0$ there is $c_\delta>0$ such that
$$G_\eps(u+v)-G_\eps(u)-\delta |u|^{2^*}\leq c_\delta |v|^{2^*}$$
for any $u,v\in\R^N$. Hence, taking any $v\in \cC_0^{\infty}(\R^N)$ and $t\in\R$ we observe that $(G_\eps(u_n+tv)-G_\eps(u_n))$ is uniformly integrable and tight. In view of Vitali's convergence theorem and passing to a subsequence
$$\lim_{n\to\infty}\int_{\R^N}G_\eps(u_n+tv)\,dx
=\lim_{n\to\infty}\int_{\R^N}G_\eps(u_n)\,dx+\int_{\R^N}G_\eps(\tu+tv)\,dx-\int_{\R^N}G_\eps(\tu)\,dx.$$
Since $u_n\in\cM_\eps$ and
$$J_\eps(u_n)=\Big(\frac{1}{2}-\frac{1}{2^*}\Big)2^*\int_{\R^N}G_\eps(u_n)\, dx\to c_\eps$$
we get $A:=\lim_{n\to\infty}\int_{\R^N}G_\eps(u_n)\,dx=\Big(\frac{1}{2}-\frac{1}{2^*}\Big)^{-1}(2^*)^{-1}c_\eps>0$.  Note also that  $u_n+tv\in\cP$ for a.e. $n$ and for sufficiently small $|t|$.
Then
	\begin{eqnarray}\label{eq:thetain1}
		&&\lim_{t\to 0}\lim_{n\to \infty}\frac{1}{t}\Big(\Big(\int_{\R^N}G_\eps(u_n+tv)\, dx\Big)^{\frac{N-2}{N}}-
		\Big(\int_{\R^N}G_\eps(u_n)\, dx\Big)^{\frac{N-2}{N}}\Big)\\\nonumber
			&&=\lim_{t\to 0}\Big(\Big(A+\int_{\R^N}G_\eps(\tu+tv)\,dx-\int_{\R^N}G_\eps(\tu)\,dx\Big)^{\frac{N-2}{N}}-
			A^{\frac{N-2}{N}}\Big)\\\nonumber
		&&=\frac{N-2}{N}
		A^{-\frac{2}{N}}
		\Big(\int_{\R^N}g_\eps(\tu)(v)\, dx\Big)\\\nonumber
		&&=\frac{N-2}{N}
		\Big(\frac{1}{2}-\frac{1}{2^*}\Big)^{\frac{2}{N}}(2^*)^{\frac{2}{N}}c_\eps^{-\frac{2}{N}}
		\Big(\int_{\R^N}g_\eps(\tu)v\, dx\Big).
	\end{eqnarray}
%	where the last equality holds, since $u_n\in\cM_\eps$ and
%	$$J_\eps(u_n)=\Big(\frac{1}{2}-\frac{1}{2^*}\Big)\int_{\R^N}|\nabla u_n|^2\, dx\to c_\eps.$$
%	Moreover	
%	\begin{equation}\label{eq:betacond1}  
%	c_\eps^{\frac2N}\Big(2^*\int_{\R^N}G(u_n)\, dx\Big)^{\frac{N-2}{N}}=c_\eps^{\frac2N}\Big(\int_{\R^N}|\nabla u_n|^2\, dx\Big)^{\frac{N-2}{N}}=\Big(\frac{1}{2}-\frac{1}{2^*}\Big)^{\frac2N}
%	\int_{\R^N}|\nabla u_n|^2\, dx+o(1).
%	\end{equation}
Observe that if $u_n+tv\in\cP$ then $J_\eps(m_{\cP}(u_n+tv))\geq c_\eps$, that is
	$$r(u_n+tv)^{2-N}\Big(\frac{1}{2}-\frac{1}{2^*}\Big)\int_{\R^N}|\nabla (u_n+tv)|^2\, dx\geq c_\eps.$$
	Hence
	\begin{eqnarray*}
		\Big(\frac{1}{2}-\frac{1}{2^*}\Big)^{\frac2N}
		\int_{\R^N}|\nabla (u_n+tv)|^2\, dx&\geq&	c_\eps^{\frac2N}\Big(2^*\int_{\R^N}G_\eps(u_n+tv)\, dx\Big)^{\frac{N-2}{N}}
	\end{eqnarray*}
	and by \eqref{eq:thetain1} we obtain
	\begin{eqnarray*}
		&&\lim_{n\to\infty}\int_{\R^N}\langle \nabla u_n,\nabla v \rangle\, dx
		=\lim_{t\to 0}\lim_{n\to\infty}\frac{1}{2t}\Big(\int_{\R^N}|\nabla (u_n+tv)|^2\, dx-\int_{\R^N}|\nabla u_n|^2\, dx\Big)\\
		&&\geq \lim_{t\to 0}\lim_{n\to\infty}\Big(\frac{1}{2}-\frac{1}{2^*}\Big)^{-\frac{2}{N}}c_\eps^{\frac{2}{N}}\frac{1}{2t}\Big(
		 \Big(2^*\int_{\R^N}G(u_n+tv)\, dx\Big)^{\frac{N-2}{N}}
		-\Big(2^*\int_{\R^N}G(u_n)\, dx\Big)^{\frac{N-2}{N}}\Big)\\
		&&=\int_{\R^N}g_\eps(\tu)v\, dx.
	\end{eqnarray*}
	Thus 
	$$\int_{\R^N}\langle \nabla \tu,\nabla v \rangle\, dx \geq \int_{\R^N}g_\eps(\tu)v\, dx$$
	for any $v\in \cC_0^{\infty}(\R^N)$ and we infer that $\tu$ is a critical point of $J_\eps$. 
In view of the Pohozaev identity (cf. Proposition \ref{PropPohozaev}), $\tu\in\cM_\eps$, $m_{\cP}(\tu)=\tu$ and
$$c_\eps\leq J(\tu)=\Big(\frac{1}{2}-\frac{1}{2^*}\Big)\int_{\R^N}|\nabla \tu|^2\,dx\leq 
\liminf_{n\to\infty}\Big(\frac{1}{2}-\frac{1}{2^*}\Big)\int_{\R^N}|\nabla u_n|^2\,dx=c_\eps.$$
Therefore $\|u_n\|\to \|\tu\|$ and $u_n\to\tu$.
\end{proof}

\begin{altproof}{Theorem \ref{ThMain1}}
(a)	Let $(u_n)\subset \cM_\eps$ be a minimizing sequence of $J_\eps$. i.e. $J_\eps(u_n)\to c_\eps$. 
	Since $J_\eps$ is coercive on $\cM_\eps$, $(u_n)$ is bounded. Observe that
	\begin{equation}\label{eqineq123}
	2^*\int_{\R^N}G_{+}(u_n)\, dx\geq \int_{\R^N}|\nabla u_n|^2\, dx= \Big(\frac12-\frac{1}{2^*}\Big)^{-1}c_\eps+o(1),
	\end{equation}
	\begin{equation*}
	\lim_{s\to 0}G_{+}(s)/s^{2^*}=\lim_{|s|\to \infty}G_{+}(s)/s^{2^*}=0,
	\end{equation*}
	and in view of Lemma \ref{lem:Lions}, \eqref{eq:LionsCond11} is not satisfied. Therefore, passing to a subsequence, we find $u_\eps\in \cD^{1,2}(\R^N)$ and $(y_n)\subset \R^N$ such that  $$u_n(\cdot+y_n)\weakto u_\eps\neq 0\;\hbox{ and }u_n(x+y_n)\to u_\eps(x)$$ for a.e. $x\in\R^N$ as $n\to \infty$. By Lemma \ref{lem:theta} we infer that $u_\eps\in\cM_\eps$ is a critical point of $J_\eps$ at level $c_\eps$.
	Now we let $\eps\to 0$ and in order to avoid confusion with notation, we denote the dependence of $\cP$ and $m_\cP$ on $\eps$  by $\cP_\eps$ and $m_{\cP_\eps}$ respectively.
	Take any $u\in\cM$ and
	observe that
	\begin{eqnarray}
		J_\eps(u_\eps)&\leq&\nonumber
		J_\eps(m_{\cP_\eps}(u))=\Big(\frac12-\frac{1}{2^*}\Big)\Big(\int_{\R^N}|\nabla u|^2\, dx\Big)^{\frac{N}{2}}\Big(2^*\int_{\R^N}G_{+}(u)-G_-^\eps(u)\, dx\Big)^{-\frac{N-2}{2}}\\
		&\leq&\Big(\frac12-\frac{1}{2^*}\Big)\Big(\int_{\R^N}|\nabla u|^2\,\nonumber dx\Big)^{\frac{N}{2}}\Big(2^*\int_{\R^N}G(u)\, dx\Big)^{-\frac{N-2}{2}}\\
		&=& J(u)\label{ineqJepsJ}
	\end{eqnarray}
	Hence $$J_\eps(u_\eps)\leq \inf_{\cM}J$$ 
	and 
	\begin{equation}\label{eq:ineqinfm}
	\int_{\R^N}|\nabla u_\eps|^2\, dx\leq \Big(\frac12-\frac{1}{2^*}\Big)^{-1}\inf_{\cM}J.
	\end{equation}
	Moreover, 
	$$J_\eps(u_\eps)=J_\eps(m_{\cP_\eps}(u_\eps))\geq J_{1/2}(m_{\cP_{1/2}}(u_\eps))\geq J_{1/2}(u_{1/2})$$
	and we obtain
	$$2^*\int_{\R^N}G_{+}(u_\eps)\, dx\geq \int_{\R^N}|\nabla u_\eps|^2\, dx\geq \Big(\frac12-\frac{1}{2^*}\Big)^{-1}J_{1/2}(u_{1/2})$$
	for $\eps\in (0,1/2]$.
	Since $u_\eps$ is bounded in $\cD^{1,2}(\R^N)$ and $\int_{\R^N}G_{+}(u_\eps)\, dx$ is bounded away from $0$, 
	in view of Lemma \ref{lem:Lions} we infer that \eqref{eq:LionsCond11} does not hold. Therefore,
	passing to a subsequence and up to a translation, we may assume that $u_\eps\weakto u_0\neq 0$ and $u_\eps(x)\to u_0(x)$ for a.e. $x\in\R^N$ as $\eps\to 0$. Observe that for any $v\in\cC_0^{\infty}(\R^N)$ we easily see that 
	$$\Big|\frac{1}{\eps^{2^*-1}}|u_\eps|^{2^*-1}\chi_{\{|u_\eps|\leq \eps\}}g_-(u_\eps)v\Big|\leq 
	\Big|\chi_{\{|u_\eps|\leq \eps\}}g_-(u_\eps)v\Big|\to 0\hbox{ a.e. on }\R^N$$
	and the family $\big\{g_-(u_\eps)v\big\}$ is uniformly integrable, since $|g_-(u_\eps)v|\leq c(1+|u_\eps|^{2^*-1})|v|$ for a constant $c>0$. Since
	$$2^*\int_{\R^N}G^\eps_{-}(u_\eps)\,dx=2^*\int_{\R^N}G_{+}(u_\eps)\,dx-\int_{\R^N}|\nabla u_\eps|^2\,dx$$
	is bounded, by Fatou's lemma we infer that $G_{-}(u_0)\in L^1(\R^N)$.
	Then, in view of the Vitali convergence theorem
	\begin{eqnarray*}
		J_\eps'(u_\eps)(v)&=&\int_{\R^N}\langle \nabla u_\eps,\nabla v\rangle\,dx-\int_{\R^N}g_{+}(u_\eps)v\,dx+\int_{\R^N}\vp_{\eps}(u_\eps)g_{-}(u_\eps)v\,dx\\
		&\to& J'(u_0)(v)
	\end{eqnarray*}
	and $u_0$ is a nontrivial weak solution of \eqref{eq}, 
	 and
	by the Pohozaev identity in Proposition \ref{PropPohozaev}, $u_0\in\cM$.  Taking into account
	\eqref{eq:ineqinfm},
	\begin{eqnarray*}
		J(u_0)&=& \Big(\frac12-\frac{1}{2^*}\Big)\int_{\R^N}|\nabla u_0|^2\leq \Big(\frac12-\frac{1}{2^*}\Big)\liminf_{\eps\to 0}\int_{\R^N}|\nabla u_\eps|^2\,dx\\
		&\leq&  \inf_{\cM} J,
	\end{eqnarray*}
	hence $J(u_0)=\inf_{\cM} J$. Now suppose that $g$ is odd. Then $G_{+}$ and $G_{-}$ are even. Observe that for the minimizing sequence  $(u_n)\subset \cM_\eps$ we can consider $(|u_n|)\subset \cM_\eps$ and 
	$$J_\eps(|u_n|)=\Big(\frac12-\frac{1}{2^*}\Big)\int_{\R^N}|\nabla |u_n||^2\,dx=J_\eps(u_n).$$
	Hence $(|u_n|)$ is a minimizing sequence of $J_\eps$ and therefore we can assume that $u_\eps\geq 0$. Hence $u_0\geq 0$ and
	in view of  the strong maximum principle $u_0>0$.\\
(b) Suppose that $J(u)=c:=\inf_{\cM}J$. Note that $G(u+v)\in L^1(\R^N)$ for any $v\in\cC_0^{\infty}(\R^N)$. Let us fix $v\in\cC_0^{\infty}(\R^N)$ and similarly as in proof of Lemma \ref{lem:theta} we show that by the Vitaly convergence theorem
\begin{eqnarray*}
&&\lim_{t\to 0}\frac1t\Big(\Big(2^*\int_{\R^N}G(u+tv)\, dx\Big)^{\frac{N-2}{N}}-
\Big(2^*\int_{\R^N}G(u)\, dx\Big)^{\frac{N-2}{N}}\Big)\\\nonumber
&&=\frac{N-2}{N}
\Big(\frac{1}{2}-\frac{1}{2^*}\Big)^{\frac{2}{N}}c^{-\frac{2}{N}}
\Big(2^*\int_{\R^N}g(u)v\, dx\Big).
\end{eqnarray*} 
Note that 
$$\int_{\R^N}G(u+tv)\, dx >0$$
if $|t|$ is sufficiently small. Hence $(u+tv)(r\cdot)\in\cM$ for 
$r=\Big(2^*\int_{\R^N}G(u+tv)\, dx\Big)^{1/2}/\|u\|$,
$J\big((u+tv)(r\cdot)\big)\geq c$, i.e.
\begin{eqnarray*}
	\Big(\frac{1}{2}-\frac{1}{2^*}\Big)^{\frac2N}
	\int_{\R^N}|\nabla (u+tv)|^2\, dx&\geq&	c^{\frac2N}\Big(2^*\int_{\R^N}G(u+tv)\, dx\Big)^{\frac{N-2}{N}}.
\end{eqnarray*}
Similarly as in proof of Lemma \ref{lem:theta} we show that $J'(u)(v)=0$. Therefore $u$ is a weak solution of \eqref{eq}. Take $\lambda:=\int_{\R^N} G(u)\, dx=\frac{1}{2^*}\|u\|^2>0$. Then, for any $v\in\cD^{1,2}(\R^N)$ such that  
\begin{equation}\label{eq:constaint}
\int_{\R^N} G(v)\, dx=\lambda
\end{equation}
we get $v(r\cdot)\in\cM$ for $r:=(2^*\lambda)^{1/2}/\|v\|$. Hence
$J(v(r\cdot))\geq J(u)$, 
$$\Big(\frac12-\frac{1}{2^*}\Big)r^{2-N}\|v\|^2\geq \Big(\frac12-\frac{1}{2^*}\Big)\|u\|^2,$$
and we get
$$\|v\|^2\geq \|u\|^2.$$
Therefore $u$ is a minimizer of the functional $\cD^{1,2}(\R^N)\ni v\mapsto \|v\|^2\in\R$ under the constraint \eqref{eq:constaint}. In view of Mariş \cite{Maris}[Theorem 2], $u$ is radial up to a translation.\\
(c) Take any $u\in\cD^{1,2}(\R^N)$ such that $\int_{\R^N}G(u)\,dx>0$. Then $u(r\cdot )\in \cM$ for some $r>0$ and the inequality $J(u(r\cdot))\geq \inf_{\cN}J$ is equivalent to \eqref{eq:ineq} with 
$C_{N,G}=2^*\Big(\frac12-\frac{1}{2^*}\Big)^{-\frac{2}{N-2}}(\inf_{\cM} J)^{\frac{2}{N-2}}$. Clearly, if $u\in\cM$ and $J(u)=\inf_{\cM} J$, then
$u$ is a minimizer of \eqref{eq:ineq}.\\ 
\indent Now let $u$ be a minimizer of \eqref{eq:ineq}. 
Then $\int_{\R^N}G(u)\,dx>0$ and $u (\lambda\cdot)\in\cM$ for a unique $\lambda>0$ and $J(u(\lambda\cdot))=\inf_{\cM} J$.
\end{altproof}

\begin{altproof}{Corollary \ref{th:cubicquintic}}
(a) follows from Theorem \ref{ThMain1} (a).\\
(b) Observe that $G(s)$ has nonpositive values for $m\geq m_0$ and in view of \eqref{eq:Poho}, \eqref{eq} does not have any nontrivial solutions. Similarly combining \eqref{eq:Poho} with $J'(u)(u)=0$ we infer that there are nontrivial solutions also for $m\leq 0$. For the uniqueness see \cite{Lewin} and references therein.
\end{altproof}

\section{Proofs of Theorem  \ref{ThMain2} and  Theorem \ref{ThMain3}}\label{sec:proof2}

Now, let us consider $\cO_1$-invariant functions.

\begin{altproof}{Theorem \ref{ThMain2}}
Assume that $X:=\cD^{1,2}_{\cO_1}(\R^N)\cap X_\tau$ and $2\leq m< N/2$.
Let $(u_n)\subset \cM_\eps\cap X$ be a sequence such that 
$J_\eps(u_n)\to \beta$ with 
$$\beta:=\inf_{\cM_\eps\cap X}J_\eps.$$
Since $J_\eps$ is coercive on $\cM_\eps$, $(u_n)$ is bounded. Observe that
$$2^*\int_{\R^N}G_{+}(u_n)\, dx\geq \int_{\R^N}|\nabla u_n|^2\, dx= \Big(\frac12-\frac{1}{2^*}\Big)^{-1}\beta+o(1)$$
and in view of Corollary \ref{CorLions1}, passing to a subsequence, we find $(y_n)\subset \{0\}\times\{0\}\times \R^{N-2m}$ such that  $$u_n(\cdot+y_n)\weakto u_\eps\neq 0\;\hbox{ and }u_n(x+y_n)\to u_\eps(x)$$ for a.e. $x\in\R^N$ as $n\to \infty$. 
Similarly as in proof of Lemma \ref{lem:theta} we show that $u_\eps$ is a critical point of $J_\eps|_X$ and by the Palais principle of symmetric criticality \cite{Palais}, $J_\eps'(u_\eps)=0$. By the Pohozaev identity (cf. Proposition \ref{PropPohozaev}), $u_\eps\in\cM_\eps\cap X$, $m_{\cP}(u_\eps)=u_\eps$ and
$$\beta\leq J_\eps(u_\eps)=\Big(\frac{1}{2}-\frac{1}{2^*}\Big)\int_{\R^N}|\nabla u_\eps|^2\,dx\leq 
\liminf_{n\to\infty}\Big(\frac{1}{2}-\frac{1}{2^*}\Big)\int_{\R^N}|\nabla u_n(\cdot+y_n)|^2\,dx=\beta.$$
Letting $\eps\to 0$ as in proof of Theorem \ref{ThMain1}, we find 
	 a critical point $u\in \cM\cap X$ of $J$ such that 
	$$J(u)=\inf_{\cM\cap X}J.$$
	In view of the Palais principle of symmetric criticality \cite{Palais}, $u$ solves \eqref{eq}.
	Let
	\begin{eqnarray*}
		\Omega_1&:=&\{x\in\R^N: |x_1|>|x_2|\},\\
		\Omega_2&:=&\{x\in\R^N: |x_1|<|x_2|\}.\\
	\end{eqnarray*}
	Since $u\in X_\tau\cap \cD^{1,2}_{\cO_1}(\R^N)$, we get $\chi_{\Omega_1} u\in \cD^{1,2}(\R^N)$ and 
	$\chi_{\Omega_2} u\in \cD^{1,2}(\R^N)$. Moreover $\chi_{\Omega_1} u\in \cM$ and
	$$J(u)=J(\chi_{\Omega_1} u)+J(\chi_{\Omega_2} u)=2J(\chi_{\Omega_1} u)\geq 2 \inf_{\cM} J.$$
	Suppose that $J(u)=2 \inf_{\cM} J$. Then 
	$$J(\chi_{\Omega_1} u)=\inf_{\cM} J$$
	and in view of Theorem \ref{ThMain1} (b), $\chi_{\Omega_1} u$ is radial (up to a translation), which is a contradiction.
This completes proof of \eqref{eq:thmain2}. The remaining case $2\leq m=N/2$ is contained in Theorem \ref{ThMain3}.
\end{altproof}

Now let us consider $\cO_2$-invariant functions. In order to the get the multiplicity of critical points, we need to modify $J_\eps$ in order to ensure that \eqref{eq:gelam} and \eqref{eq:gepslambda} below are satisfied. Take any even function $\psi_\lambda:\R\to [0,1]$ of class $\cC^1$ such that $\psi_\lambda(s)=1$ for $\lambda\leq |s|\leq 1/\lambda$ and $\supp(\psi_\lambda)$ is compact and does not contain $0$ for $\lambda \in (0,1]$. We set $\psi_0\equiv 1$. Let $G_{+,\lambda}(s):=\psi_\lambda(s)G_{+}(s)$ and
instead of $G_\eps$ we consider now
$$G_{(\eps,\lambda)}(s):=G_{+,\lambda}(s)-\lambda|s|^{2^*}-G^\eps_{-}(s).$$
Take $g_{+,\lambda}(s):=(\psi_\lambda(s)G_{+}(s))'$ and we check that
\begin{equation}\label{eq:gelam}
\lim_{s\to 0} g_{+,\lambda}(s)/|s|^{2^*-1}=\lim_{|s|\to \infty}g_{+,\lambda}(s)/|s|^{2^*-1}=0.
\end{equation}
Let us introduce the following functional $$J_{(\eps,\lambda)}(u):=\frac12\|u\|^2-\int_{\R^N}G_{(\eps,\lambda)}(u)\,dx$$
for $\eps\in (0,1/2]$ and $\lambda\in [0,1]$. Since  \eqref{eq:gelam} holds, $J_{(\eps,\lambda)}$ is of class $\cC^1$. Clearly, Proposition \ref{prop:defMPSU} holds if we replace $J_\eps$, $g_\eps$ and $G_\eps$ by $J_{(\eps,\lambda)}$, $g_{(\eps,\lambda)}:=G_{(\eps,\lambda)}'$ and $G_{(\eps,\lambda)}$ respectively and 
$\lambda>0$ is sufficiently small, i.e. there is $\lambda_0\in (0,1]$ such that  $G_{(0,\lambda)}(\xi_0)>0$ for $\lambda\in [0,\lambda_0]$. We may also assume that $\psi_{\lambda}(s)\geq \psi_{\lambda_0}(s)$, hence  $G_{(0,\lambda)}(s)\geq G_{(0,\lambda_0)}(s)$ for any $s\in\R$ and $\lambda\in [0,\lambda_0]$.
Here and what follows $\cP$, $\cU$, $m$ depend on $\eps$ and $\lambda$, and are given in Proposition \ref{prop:defMPSU}, where $J_\eps$, $g_\eps$ and $G_\eps$ are replaced by $J_{(\eps,\lambda)}$, $g_{(\eps,\lambda)}$ and $G_{(\eps,\lambda)}$ respectively. $\cM_{(\eps,\lambda)}$ stands for the Pohozaev manifold for $J_{(\eps,\lambda)}$.

\begin{Lem}\label{lem:Mcond3}
	Suppose that $X:=\cD^{1,2}_{\cO_2}(\R^N)\cap X_\tau$ and $(u_n)\subset\cU\cap X$ is a $(PS)_\beta$-sequence of $(J_{(\eps,\lambda)}|_X\circ m|_{\U\cap X})$ at level $\beta\in\R$, i.e. $$(J_{(\eps,\lambda)}|_X\circ m|_{\U\cap X})'(u_n)\to 0\hbox{ and }(J_{(\eps,\lambda)}|_X\circ m|_{\U\cap X})(u_n)\to\beta.$$
	(i) Then, passing to a subsequence, $u_n\to u_0$ for some $u_0\in \cU\cap X$.\\
	(ii) $J_{(\eps,\lambda)}'(m(u_0))=0$ provided that $\lambda\in (0,\lambda_0]$. 
\end{Lem}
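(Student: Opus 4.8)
The plan is to transfer the constrained Palais--Smale sequence onto the Pohozaev manifold and then run a concentration--compactness argument tailored to the symmetry group $\cO_2$. Write $v_n:=m(u_n)\in\cM_{(\eps,\lambda)}\cap X$. Since $J_{(\eps,\lambda)}(v_n)=(J_{(\eps,\lambda)}\circ m)(u_n)\to\beta$ and $J_{(\eps,\lambda)}$ is coercive on $\cM_{(\eps,\lambda)}$ (the analogue of Proposition \ref{prop:defMPSU}(iii)), the sequence $(v_n)$ is bounded in $\cD^{1,2}(\R^N)$ and $\|v_n\|^2=(\tfrac12-\tfrac1{2^*})^{-1}J_{(\eps,\lambda)}(v_n)\to(\tfrac12-\tfrac1{2^*})^{-1}\beta$, where $\beta\ge c_{(\eps,\lambda)}>0$. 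Using the critical point theory of \cite{MederskiBL}[Section 2] and the fact that $J_{(\eps,\lambda)}$ is of class $\cC^1$ (this is exactly why the modification \eqref{eq:gelam} was introduced), the hypothesis $(J_{(\eps,\lambda)}|_X\circ m|_{\U\cap X})'(u_n)\to0$ yields $J_{(\eps,\lambda)}'(v_n)\to0$ in $X^*$: the point is that for $v\in\cM_{(\eps,\lambda)}$ the derivative $J_{(\eps,\lambda)}'(v)$ already annihilates the dilation direction $-\langle x,\nabla v\rangle$ (this is precisely $h'(1)=0$ for $h(t)=J_{(\eps,\lambda)}(v(\cdot/t))$, i.e. the Pohozaev identity), a direction transversal to $v^\perp$ since $\langle v,-\langle x,\nabla v\rangle\rangle=\tfrac{N-2}{2}\|v\|^2$ is bounded away from $0$. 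By the principle of symmetric criticality \cite{Palais}, $(v_n)$ is a Palais--Smale sequence for $J_{(\eps,\lambda)}$ on all of $\cD^{1,2}(\R^N)$.

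Next I would locate the weak limit. From $v_n\in\cM_{(\eps,\lambda)}$ we get $\|v_n\|^2\le 2^*\int_{\R^N}G_{+,\lambda}(v_n)\,dx$, so $\int_{\R^N}G_{+,\lambda}(v_n)\,dx$ stays bounded away from $0$; as $G_{+,\lambda}=\psi_\lambda G_{+}$ satisfies \eqref{eq:Psi}, Corollary \ref{CorLions1} forces the vanishing condition \eqref{eq:LionsCond12_cor} to fail, i.e. there are $r\ge r_0$, $\delta>0$ and $z_n\in\R^{N-2m}$ with $\int_{B((0,0,z_n),r)}|v_n|^2\,dx\ge\delta$. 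Here the extra $\cO(N-2m)$-invariance (available because $N\neq5$ forces $N-2m\ge2$) is decisive: rotating $(0,0,z_n)$ over the sphere $\{|x^3|=|z_n|\}$ produces, for $|z_n|$ large, of order $|z_n|^{N-2m-1}$ pairwise disjoint balls, each carrying (by H\"older on a ball) an $L^{2^*}$-mass bounded below, so that the Sobolev bound on $\|v_n\|_{L^{2^*}}$ forces $(z_n)$ to remain bounded. Hence no mass escapes to spatial infinity and, up to a subsequence and \emph{without any translation}, $v_n\weakto v_0$ with $v_0\neq0$ and $v_n\to v_0$ a.e.

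The main difficulty is upgrading this to strong convergence $v_n\to v_0$. Passing to the limit in $J_{(\eps,\lambda)}'(v_n)\to0$ gives $J_{(\eps,\lambda)}'(v_0)=0$; set $w_n:=v_n-v_0\weakto0$. As $w_n$ is again $\cO_2$-invariant and weakly null, the argument of the previous paragraph shows it cannot concentrate at any bounded point, so $w_n$ satisfies \eqref{eq:LionsCond12_cor} and Corollary \ref{CorLions1} gives $\int_{\R^N}\Psi(w_n)\,dx\to0$ for every $\Psi$ obeying \eqref{eq:Psi}; in particular the subcritical contribution of $g_{+,\lambda}$ disappears. Using $J_{(\eps,\lambda)}'(v_n)(v_n)\to0$ and $J_{(\eps,\lambda)}'(v_0)(v_0)=0$ together with the Brezis--Lieb lemma, everything cancels except the defocusing critical term, leaving
$$o(1)=\|w_n\|^2+2^*\lambda\int_{\R^N}|w_n|^{2^*}\,dx+\int_{\R^N}\vp_\eps(w_n)g_{-}(w_n)w_n\,dx.$$
All three summands are nonnegative (recall $g_-(s)s\ge0$), and this is exactly where $\lambda>0$ is needed: the subtracted term $-\lambda|s|^{2^*}$ renders the problem defocusing at the critical scale and excludes bubbling. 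I expect this to be the crux, since without this modification an $\cO(N)$-symmetric Aubin--Talenti bubble concentrating at the origin would be compatible with the symmetry and would destroy compactness. We conclude $\|w_n\|\to0$, i.e. $v_n\to v_0$ in $X$.

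Finally, since $m^{-1}(v)=v(\|v\|^{2/(N-2)}\cdot)$ is continuous (Proposition \ref{prop:defMPSU}(ii)), we obtain $u_n=m^{-1}(v_n)\to u_0:=m^{-1}(v_0)$ in $\cS\cap X$; as $v_0\in\cM_{(\eps,\lambda)}$ is nontrivial we have $u_0\in\cP$, hence $u_0\in\cU\cap X$ (equivalently $u_0$ cannot lie on $\partial\cU$, since $(J_{(\eps,\lambda)}\circ m)(u_0)=\beta<\infty$ contradicts Proposition \ref{prop:defMPSU}(iv)), which proves (i). For (ii), strong convergence and continuity of $J_{(\eps,\lambda)}'$ yield $J_{(\eps,\lambda)}'(m(u_0))=J_{(\eps,\lambda)}'(v_0)=0$; the restriction $\lambda\in(0,\lambda_0]$ guarantees $G_{(0,\lambda)}(\xi_0)>0$, hence $\cP\neq\emptyset$ and the whole scheme—most notably the $\cC^1$-regularity invoked above—remains valid.
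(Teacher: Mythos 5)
Your overall architecture (transfer to $v_n=m(u_n)$, rule out vanishing via the $\cO_2$-symmetry, Brezis--Lieb to get strong convergence, pull back through $m^{-1}$) matches the paper's, but there is a genuine gap at the very first step, and it is exactly the step where the real difficulty of this lemma lives. You assert that $(J_{(\eps,\lambda)}|_X\circ m)'(u_n)\to 0$ yields $J_{(\eps,\lambda)}'(v_n)\to 0$ in $X^*$, on the grounds that $J_{(\eps,\lambda)}'(v_n)$ ``already annihilates the dilation direction'' because $v_n\in\cM_{(\eps,\lambda)}$, and that this direction is transversal to $v_n^{\perp}$. This does not work as stated. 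First, $-\langle x,\nabla v_n\rangle$ need not belong to $\cD^{1,2}(\R^N)$, and even when it does its norm is not controlled along the sequence; the identity $h'(1)=0$ for $h(t)=J_{(\eps,\lambda)}(v_n(\cdot/t))$ is an integrated (Pohozaev-type) statement, not the value of the Fr\'echet derivative on an admissible direction. Second, even granting membership in $X$, to conclude $J'(v_n)(z)\to 0$ uniformly for $\|z\|\le 1$ you must decompose $z=z^{\perp}+c\,d_n$ with $\|z^{\perp}\|$ uniformly bounded, which fails when $\|d_n\|$ is unbounded. The hypothesis only controls $J_{(\eps,\lambda)}'(v_n)$ on the codimension-one subspace $\{v\in X:\int\langle\nabla v_n,\nabla v\rangle\,dx=0\}$ (after rescaling by $r(u_n)$); the component along $v_n$ itself, i.e.\ $J_{(\eps,\lambda)}'(v_n)(v_n)$, is \emph{not} controlled by the $(PS)$ assumption. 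Consequently your displayed identity $o(1)=\|w_n\|^2+2^*\lambda\int|w_n|^{2^*}+\int\vp_\eps(w_n)g_-(w_n)w_n$, which rests on $J'(v_n)(v_n)\to 0$, and your one-line proof of (ii), are both unjustified.

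The paper's proof confronts precisely this: it only deduces that the weak limit $\tu$ solves $-\theta\Delta\tu=g_{(\eps,\lambda)}(\tu)$ for a Lagrange-type multiplier
$\theta=\|\tu\|^{-2}\int_{\R^N}g_{(\eps,\lambda)}(\tu)\tu\,dx$, obtains strong convergence from $\theta_n\to\theta$ together with Lemma \ref{lem:BrezisLieb} and Fatou (this is where \eqref{eq:gelam}, i.e.\ the $\psi_\lambda$-truncation, is used), and then must exclude $\theta=0$. That exclusion is the actual role of $\lambda>0$: it is carried out via \eqref{eq:gepslambda}, $\limsup_{s\to0^+}g_{(\eps,\lambda)}(s)/s^{2^*-1}\le-\lambda<0$, combined with a measure-theoretic argument (characteristic functions cannot lie in $H^1$) showing $g_{(\eps,\lambda)}(\tu)$ cannot vanish a.e.; finally the Pohozaev identity forces $\theta=1$. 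Your diagnosis that $\lambda>0$ serves to ``exclude bubbling at the critical scale'' misidentifies where the parameter enters. To repair your proposal you would need either to justify a uniform bound on the dilation directions $\langle x,\nabla v_n\rangle$ in $\cD^{1,2}$ (which is not available) or to adopt the multiplier argument; the latter is not a cosmetic addition but the core of part (ii).
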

\begin{proof}
Note that, if $\beta=\inf_{\cM_\eps\cap X} J_{(\eps,\lambda)}$, then we can argue as in Lemma \ref{lem:theta}. 
	Let $(u_n)\subset \cU\cap X$ be a sequence such that $(J_{(\eps,\lambda)}|_X\circ m|_{\U\cap X})'(u_n)\to 0$ and $(J_{(\eps,\lambda)}|_X\circ m|_{\U\cap X})(u_n)\to\beta$.  Observe that $\beta\geq \inf_{\cM_\eps\cap X} J_{(\eps,\lambda)}>0$.
	Since $J_{(\eps,\lambda)}$ is coercive on $\cM_{(\eps,\lambda)}$, $(m(u_n))$ is bounded and, passing to subsequence,  we may assume that $m(u_n)\weakto\tu$ and
	$m(u_n)(x)\weakto\tu(x)$ for a.e. $x\in\R^N$.  
	In view of  Lemma \ref{lem:BrezisLieb} (b)
we infer that
	\begin{equation}\label{eq:G_1conv}
	\int_{\R^N}G_{+,\lambda}(m(u_n))\, dx\to \int_{\R^N}G_{+,\lambda}(\tu)\, dx
	\end{equation}
	as $n\to\infty$. If $\tu=0$, then we get a contradiction with the following inequality
	$$2^*\int_{\R^N}G_{+,\lambda}(m(u_n))\, dx\geq \int_{\R^N}|\nabla m(u_n)|^2\, dx= \Big(\frac12-\frac{1}{2^*}\Big)^{-1}\beta+o(1).$$
	 Therefore $\tu\neq 0$ and we easily check  that $r(u_n)$ given by \eqref{eq:defOfR} is bounded and bounded away from $0$. 
	 For any $v\in X$ we set $v_n:=v(r(u_n)^{-1}\cdot)$ and
	 we find the following decomposition
	 $$v_n=\Big(\int_{\R^N}\langle \nabla u_n,\nabla v_n\rangle\,dx\Big)u_n+\tv_n$$
	 with 
	 $$\tv_n\in T_{u_n}\cS:=\Big\{u\in\cD^{1,2}(\R^N): \int_{\R^N}\langle\nabla u_n,\nabla u\rangle\,dx=0\Big\}.$$
	 Clearly $(\tv_n)\subset X$ is bounded and  $(J_{(\eps,\lambda)}|_X\circ m|_{\U\cap X})'(u_n)(\tv_n)\to 0$ as $n\to\infty$. Since 
	 $$\int_{\R^N}\langle \nabla u_n,\nabla v_n\rangle\,dx=r(u_n)^{N-2}
	 \int_{\R^N}\langle \nabla m(u_n),\nabla v\rangle\,dx\to 0$$
	 for any $v\in X$ such that $\int_{\R^N}\langle \nabla\tu,\nabla v\rangle\,dx=0$,
we get
	 \begin{eqnarray*}
	\big(J_{(\eps,\lambda)}|_X\circ m|_{\U\cap X}\big)'(u_n)(v(r(u_n)^{-1}\cdot))&=&\Big(\int_{\R^N}\langle \nabla u_n,\nabla v_n\rangle\,dx\Big)\big(J_{(\eps,\lambda)}|_X\circ m|_{\U\cap X}\big)'(u_n)(u_n)\\
	&&+\big(J_{(\eps,\lambda)}|_X\circ m|_{\U\cap X}\big)'(u_n)(\tv_n)\\
	&\to& 0.
	 \end{eqnarray*}
	 By Proposition \eqref{prop:defMPSU} (ii) we obtain
	 \begin{equation}\label{eq:Jepslem}
	 J_{(\eps,\lambda)}'(\tu)(v)=\lim_{n\to\infty}J_{(\eps,\lambda)}'(m(u_n))(v)=\lim_{n\to\infty}\big(J_{(\eps,\lambda)}|_X\circ m|_{\U\cap X}\big)'(u_n)(v(r(u_n)^{-1}\cdot))=0
	 \end{equation}
	for $v\in X$ such that $\int_{\R^N}\langle \nabla\tu,\nabla v\rangle\,dx=0$.
Now	we define a linear map
$\xi:X\to \R$ by the following formula
\begin{eqnarray*}
	\xi(v)&=&\int_{\R^N}\langle \nabla \tu,\nabla v\rangle\, dx -\int_{\R^N}g_{(\eps,\lambda)}(\tu)v\,dx\\
	&&-\Big(\int_{\R^N}|\nabla \tu|^2\, dx -\int_{\R^N}g_{(\eps,\lambda)}(\tu)\tu\,dx\Big)
	\|\tu\|^{-2}\int_{\R^N}\langle\nabla \tu,\nabla v\rangle\, dx
\end{eqnarray*}
and observe that $\xi(\tu)=0$. Since any $v\in X$ has the following decomposition
$$v=\Big(\int_{\R^N}\langle \nabla \tu,\nabla v\rangle\,dx\Big)\|\tu\|^{-2}\tu+\tv,\hbox{ where }\int_{\R^N}\langle \nabla\tu,\nabla\tv\rangle\,dx=0,$$
in view of \eqref{eq:Jepslem} we infer that $\xi\equiv 0$. 
Hence by the Palais principle of symmetric criticality \cite{Palais},
$\tu$ is a weak solution of the problem 
\begin{equation}\label{eq:theta_eq}
-\theta\Delta\tu=g_{(\eps,\lambda)}(\tu)
\end{equation}
with 
$$\theta=1-\Big(\int_{\R^N}|\nabla \tu|^2\, dx -\int_{\R^N}g_{(\eps,\lambda)}(\tu)\tu\,dx\Big)\|\tu\|^{-2}
=\|\tu\|^{-2}\int_{\R^N}g_{(\eps,\lambda)}(\tu)\tu\,dx.$$
Moreover, similarly as above we define linear maps
$\xi_n:X\to \R$ by the following formula
\begin{eqnarray*}
	\xi_n(v)&=&\int_{\R^N}\langle \nabla m(u_n),\nabla v\rangle\, dx -\int_{\R^N}g_{(\eps,\lambda)}(m(u_n))v\,dx\\
	&&-\Big(\int_{\R^N}|\nabla m(u_n)|^2\, dx -\int_{\R^N}g_{(\eps,\lambda)}(m(u_n))m(u_n)\,dx\Big)
	\|m(u_n)\|^{-2}\int_{\R^N}\langle\nabla m(u_n),\nabla v\rangle\, dx,
\end{eqnarray*}
and we show that $\xi_n\to 0$ in $X^*$. Hence, passing to a subsequence 
\begin{eqnarray*}
\theta_n&:=&1-\Big(\int_{\R^N}|\nabla m(u_n)|^2\, dx -\int_{\R^N}g_{(\eps,\lambda)}(m(u_n))m(u_n)\,dx\Big)\|m(u_n)\|^{-2}\\
&=&\|m(u_n)\|^{-2}\int_{\R^N}g_{(\eps,\lambda)}(m(u_n))m(u_n)\,dx
\end{eqnarray*}
converges to $\theta$. Since \eqref{eq:gelam} holds, in view of Lemma \ref{lem:BrezisLieb} and \eqref{eq:BL3} we infer that 
$$\int_{\R^N}g_{+,\lambda}(m(u_n))m(u_n)\,dx\to \int_{\R^N}g_{+,\lambda}(\tu)\tu\,dx$$
and  by the Fatou's lemma
$$\limsup_{n\to\infty}\int_{\R^N}g_{(\eps,\lambda)}(m(u_n))m(u_n)\,dx\leq \int_{\R^N}g_{(\eps,\lambda)}(\tu)\tu\,dx.$$
Since $\theta_n\to\theta$, we conclude that 
 $\|m(u_n)\|\to\|\tu\|$ and therefore $m(u_n)\to\tu$ and  $\tu\in \cM_{(\eps,\lambda)}$. By Proposition \ref{prop:defMPSU} (ii), $u_n\to u_0:=m^{-1}(\tu)$. 
We show that $\theta\neq 0$ provided that $\lambda>0$. By a contradiction, suppose that $\theta=0$, then $g_{(\eps,\lambda)}(\tu(x))=0$ for a.e. $x\in\R^N$. 
Take 
$\Sigma:=\{x\in\R^N: g_{(\eps,\lambda)}(\tu(x))=0\}$ 
and clearly $\R^N\setminus \Sigma$ has measure zero and let
$\Om:=\{x\in \Sigma: \tu(x)\neq 0\}$.
Suppose that
$\delta:=\inf_{x\in\Om}|\tu(x)|>0$. Since $\tu\in L^6(\R^N)\setminus\{0\}$, we infer that $\Om$ has finite positive measure, $\tu\in H^1(\R^N)$
and note that
$$\int_{\R^N}|\tu(x+h)-\tu(x)|^2\,dx\geq \delta \int_{\R^N}|\chi_\Om(x+h)-\chi_\Om(x)|^2\,dx\quad\hbox{ for any }h\in\R^N,$$
where $\chi_\Om$ is the characteristic function of $\Om$. In view of \cite{Ziemer}[Theorem 2.1.6] we infer that $\chi_\Om\in H^1(\R^N)$, hence  we get a contradiction.
Therefore we find a sequence $(x_n)\subset\R^N$ such that $\tu(x_n)\to 0$, $\tu(x_n)\neq 0$ and $g_{(\eps,\lambda)}(\tu(x_n))=0$. Again we get a contradiction, since
\begin{equation}\label{eq:gepslambda}
\limsup_{s\to 0^+}g_{(\eps,\lambda)}(s)/s^{2^*-1}\leq-\lambda<0.
\end{equation}
Therefore $\theta\neq 0$ and in view of the Pohozaev identity (cf. Proposition \ref{PropPohozaev}) we obtain that $\theta=1$,  since $\tu\in\cM_{(\eps,\lambda)}$. Hence (ii) holds.
\end{proof}

\begin{altproof}{Theorem \ref{ThMain3}}\\
(a)
Assume that $X:=\cD^{1,2}_{\cO_2}(\R^N)\cap X_\tau$.
Similarly as in proof of Theorem \ref{ThMain1} we find 
a critical point $u\in \cM\cap X$ of $J|_X$ such that 
$$J(u)=\inf_{\cM\cap X}J$$
and by the Palais principle of symmetric criticality \cite{Palais}, $u$ solves \eqref{eq}.\\
(b) {\em Step 1}. For any $\eps\in (0,1/2]$ and $\lambda\in (0,\lambda_0]$,  we show the existence of a sequence $(u^k_{(\eps,\lambda)})$ of critical points of $J_{(\eps,\lambda)}$ such that $J_{(\eps,\lambda)}(u^k_{(\eps,\lambda)})$ as $k\to\infty$. Let us fix $\lambda\in [0,\lambda_0]$.
In view of \cite{BerLionsII}[Theorem 10], for any $k\geq 1$ we find an odd continuous map
$$\tau:S^{k-1}\to H_0^1(B(0,R))\cap L^{\infty}(B(0,R))$$ 
such that $\tau(\sigma)$ is a radial function and $\tau(\sigma)\neq 0$ for all $\sigma\in S^{k-1}$, where $S^{k-1}$ is the unit sphere in $\R^k$. Moreover,  since $G_{(0,\lambda)}(\xi_0)>0$,  we may find some constants $c_2,c_3>0$ independent on $R$ such that
$$\int_{B(0,R)}G_{(0,\lambda)}(\tau(\sigma))\,dx\geq c_2R^N-c_3R^{N-1}$$
for any $\sigma\in S^{k-1}$. 
 As in \cite{MederskiBL}[Remark 4.2] we define a map 
$$\tilde{\tau}:S^{k-1}\to H_0^1(B(0,R))\cap L^{\infty}(B(0,R))$$ such that $\tilde{\tau}(\sigma)(x_1,x_2,x_3)=\tau(\sigma)(x_1,x_2,x_3)\vp(|x_1|-|x_2|)$ and   $\vp:\R\to [0,1]$ is an odd and smooth function such that $\vp(x)=1$ for $x\geq 1$,  $\vp(x)=-1$ for $x\leq -1$.  If $\lambda=\lambda_0$, then we denote this map by $\tilde{\tau_{\lambda_0}}$.  Observe that $\tilde{\tau}(\sigma)\in X$ and, again as in \cite{MederskiBL}[Remark 4.2], we show that
$$
\int_{B(0,R)}G_{(0,\lambda)}(\tilde{\tau}(\sigma))\,dx\geq \int_{B(0,R)}G_{(0,\lambda)}(\tau(\sigma))\,dx-c_1R^{N-1}$$ for $\sigma\in S^{k-1}$ and some constant $c_1>0$. Therefore, for sufficiently large $R=R(\lambda)$
\begin{equation}\label{eq:Rtau}
\int_{B(0,R)}G_{(\eps,\lambda)}(\tilde{\tau}(\sigma))\,dx\geq \int_{B(0,R)}G_{(0,\lambda)}(\tilde{\tau}(\sigma))\,dx>0
\end{equation}
for any $\eps\in[0,1/2]$ and $\lambda\in [0,\lambda_0]$.
Hence
$\tilde{\tau}(\sigma)\in\cP\cap X$ if $\eps>0$.
Taking $p(u):=u/\|u\|$ we obtain that
\begin{equation}\label{eq:gammaset}
\gamma\Big(p\big(\tilde\tau(S^{k-1})\big)\Big)\geq k,
\end{equation}
where $\gamma$ stands for the Krasnoselskii genus for closed and symmetric subsets of $X$. Therefore the Lusternik-Schnirelman values
\begin{equation}\label{eq:LSvalue}
\beta^k_{(\eps,\lambda)}:= \inf\big\{\beta\in\R:  \gamma\big(\Phi^{\beta}_{(\eps,\lambda)}\big)\geq k\big\}
\end{equation}
are finite, where $\Phi_{(\eps,\lambda)}:=J_{(\eps,\lambda)}\circ m|_X:\cU\cap X\to\R$ and
$\Phi^\beta_{(\eps,\lambda)}:=\big\{u\in \cU\cap X: \Phi_{(\eps,\lambda)}(u)\leq \beta\big\}$  for any $\eps\in (0,1/2]$ and $\lambda\in [0,\lambda_0]$. Recall that
$\cP$, $\cU$, $m$ depend on $\eps$ and $\lambda$. Moreover, observe that
\begin{equation*}
\Phi_{(\eps,\lambda)}(u)=J_{(\eps,\lambda)}(m(u))=\Big(\frac12-\frac{1}{2^*}\Big)\Big(2^*\int_{\R^N}\psi_\lambda(u)G_{+}(u)-G_{-}^\eps(u)-\frac{\lambda}{2^*}|u|^{2^*}\, dx\Big)^{-\frac{N-2}{2}},
\end{equation*}
and in view of \eqref{eq:Rtau} we obtain the following estimates
\begin{eqnarray}\label{eq:betaestimate}
\beta^k_{(1/2,0)}&\leq& \beta^k_{(\eps,0)}\leq \beta^k_{(\eps,\lambda)} \leq \beta^k_{(\eps,\lambda_0)}\\
&\leq& M^k:=\sup_{u\in p(\tilde\tau_{\lambda_0}(S^{k-1}))}\Big(\frac12-\frac{\lambda}{2^*}\Big)\Big(2^*\int_{B(0,R(\lambda_0))}G_{(0,\lambda_0)}(u)\, dx\Big)^{-\frac{N-2}{2}},\nonumber
\end{eqnarray}
for any $\eps\in (0,1/2]$ and $\lambda\in [0,\lambda_0]$. 
Since Lemma \ref{lem:Mcond3} holds, in view of
 \cite{MederskiBL}[Theorem 2.2 (c)] we get an infinite sequence of critical points, namely $(\beta^k_{(\eps,\lambda)})_{k\geq 1}$ are critical values
 provided that $\eps\in (0,1/2]$ and $\lambda\in (0,\lambda_0]$.
It is standard to show that the sequence is unbounded.
Indeed, 
as in \cite{MederskiBL,Rabinowitz:1986} we show that $\beta^1_{(\eps,\lambda)}<\beta^2_{(\eps,\lambda)}<...<\beta^k_{(\eps,\lambda)}<...$ is an increasing sequence of critical values, due to Lemma \ref{lem:Mcond3} and $\Phi_{(\eps,\lambda)}(u)\to\infty$ as $u\to u_0$ for some $u_0\in\partial (\cU\cap X)$. Suppose that  $\bar\beta:=\lim_{k\to\infty}\beta^k_{(\eps,\lambda)}<\infty$. 
Note that
$$\cK^{\bar{\beta}}:=\big\{u\in \cU\cap X: \Phi'_{(\eps,\lambda)}(u)=0\hbox{ and }\Phi_{(\eps,\lambda)}(u)=\bar{\beta}\big\}$$
is compact and $\gamma\big(\cl B(\cK^{\bar{\beta}},\delta)\big)=\gamma\big(\cK^{\bar{\beta}}\big)<\infty$ for some small $\delta>0$.
Similarly as in proof of \cite{MederskiBL}[Theorem 2.2] we construct a continuous and odd map 
$h:\Phi^{\bar{\beta}+\eta}_{(\eps,\lambda)}\setminus B(\cK^{\bar{\beta}},\delta)\to \Phi^{\bar{\beta}-\eta}_{(\eps,\lambda)}$ for sufficiently small $\eta>0$ 
such that 
$$\Phi^{\bar{\beta}+\eta}_{(\eps,\lambda)}\setminus \big(B(\cK^{\bar{\beta}},\delta)\cup \Phi^{\bar{\beta}-\eta}_{(\eps,\lambda)}\big)$$
does not contain any critical point.
Hence
\begin{eqnarray*}
\gamma\big(\Phi^{\bar\beta+\eta}_{(\eps,\lambda)}\big)&\leq& \gamma \big((\cl B(\cK^{\bar\beta},\delta)\big)+
\gamma\big(\Phi^{\bar\beta+\eta}_{(\eps,\lambda)}\setminus B(\cK^{\bar\beta},\delta)\big)\\
&\leq &\gamma \big(\cl B(\cK^{\bar\beta},\delta)\big)+
\gamma\big(\Phi^{\bar\beta-\eta}_{(\eps,\lambda)}\big)=:l<\infty.
\end{eqnarray*}
We obtain a contradiction with $\gamma\big(\Phi^{\bar\beta+\eta}_{(\eps,\lambda)}\big)\geq \gamma\big(\Phi^{\beta^{l+2}_{(\eps,\lambda)}}_{(\eps,\lambda)}\big)\geq l+1$. Therefore $\Phi_{(\eps,\lambda)}$ has a sequence of critical points $(u^k_{(\eps,\lambda)})\subset \cS$ with $$\Phi_{(\eps,\lambda)}\big((u^k_{(\eps,\lambda)})\big)=\beta^k_{(\eps,\lambda)}\to\infty$$
as $k\to\infty$, for $\eps\in(0,1/2]$ and $\lambda\in (0,\lambda_0]$.
Hence, by Lemma \ref{lem:Mcond3} (ii), $J_{(\eps,\lambda)}$ has an unbounded sequence of critical points $(m(u^k_{(\eps,\lambda)}))$ for $\eps\in(0,1/2]$ and $\lambda\in (0,\lambda_0]$.\\
{\em Step 2}. We show the existence a sequence $(u_\eps^k)$ of critical points of $J_\eps$  for any $\eps\in (0,1/2]$ such that $J_\eps(u_\eps^k)\to\infty$ as $k\to\infty$. Indeed, take $\lambda_n\in (0,\lambda_0]$ such that $\lambda_n\to 0$ as $n\to\infty$ and 
in view of \eqref{eq:betaestimate}, $v_n:=m(u^k_{(\eps,\lambda_n)})$ is bounded.
Passing to a subsequence, $v_n\weakto v_0$ and $v_n(x)\to v_0(x)$ for a.e. $x\in\R^N$. Since $J_{(\eps,\lambda_n)}'(v_n)=0$, we obtain  that $J'_\eps(v_0)=0$ 
and
by Lemma \ref{lem:BrezisLieb} (b)
$$
	\int_{\R^N}G_{+}(v_n)\, dx\to \int_{\R^N}G_{+}(v_0)\, dx
$$
	as $n\to\infty$. If $v_0=0$, then we get a contradiction since 
$$0<\liminf_{n\to\infty}\Phi_{(\eps,\lambda_n)}(u^k_{(\eps,\lambda_n)})\leq \liminf_{n\to\infty}\Big(\frac12-\frac{1}{2^*}\Big)\int_{\R^N}G_{+}(v_n)-G_-^\eps(v_n)\, dx\leq 0.$$
Therefore $v_0\in \cM_{\eps}$ and $u^k_\eps:=m^{-1}(v_0)$ is a critical point of $\Phi_{(\eps,0)}$. Moreover by Fatou's lemma
\begin{eqnarray*}
\|v_0\|^2+2^*\int_{\R^N}G_{-}^\eps(v_0)\,dx&\leq&2^*\liminf_{n\to\infty}\Big(\|v_n\|^2+2^*\int_{\R^N}G_{-}^\eps(v_n)\,dx\Big)\\
&\leq&2^*\liminf_{n\to\infty}\int_{\R^N}G_{+}(v_n)\,dx=2^*\int_{\R^N}G_{+}(v_0)\,dx\\
&=&\|v_0\|^2+2^*\int_{\R^N}G_{-}^\eps(v_0)\,dx,
\end{eqnarray*}
hence $v_n\to v_0$. Therefore $u^k_{(\eps,\lambda_n)}\to u^k_{\eps}$ and $\beta^k_{(\eps,\lambda_n)}\to \Phi_\eps(u^k_{\eps})$ as $n\to\infty$. Moreover $J_\eps(m(u_\eps^k))=\Phi_\eps(u^k_{\eps})\geq \beta^k_{(\eps,0)}\to\infty$ as $k\to\infty$.\\
{\em Step 3}. We show the existence of an unbounded sequence of critical point of $J$ with finite energy. 
Take $\eps_n\in (0,1/2]$ such that $\eps_n\to 0$ as $n\to\infty$. Again, in view of \eqref{eq:betaestimate} and
passing to a subsequence, we may assume that $v_n:=m(u^k_{\eps_n})\weakto v^k$ and $v_n(x)\to v^k(x)$ for a.e. $x\in\R^N$. Since $J_{\eps_n}'(v_n)=0$, we obtain  that $J'(v^k)(\psi)=0$ for any $\psi\in\cC_0^{\infty}(\R^N)$, 
and
by Lemma \ref{lem:BrezisLieb} 
$$
\int_{\R^N}G_{+}(v_n)\, dx\to \int_{\R^N}G_{+}(v^k)\, dx
$$
as $n\to\infty$. If $v^k=0$, then 
$$\beta^k_{(1/2,0)}\leq \liminf_{n\to\infty}\Phi_{(\eps_n,0)}(u^k_{\eps_n})=\liminf_{n\to\infty}\Big(\frac12-\frac{1}{2^*}\Big)\int_{\R^N}G_{+}(v_n)-G_{-}^{\eps_n}(v_n)\, dx\leq0$$
and we get a contradiction since $\beta^k_{(1/2,0)}$ is a critical value and by \eqref{eq:infJM}, $$\beta^k_{(1/2,0)}\geq \inf_{\cM_{1/2}}J_{1/2}>0.$$
By the Fatou's lemma 
$$\|v^k\|^2+2^*\int_{\R^N}G_{-}(v^k)\,dx\leq\liminf_{n\to\infty}\Big(\|v_n\|^2+2^*\int_{\R^N}G_{-}^{\eps_n}(v_n)\,dx\Big)
=G_{+}(v^k)\,dx$$
and $G_{-}(v^k)\in L^1(\R^N)$. In view of Proposition \ref{PropPohozaev}, we obtain that $v^k\in\cM$, i.e. the equality holds above, hence $\|v_n\|\to \|v^k\|$. Therefore $v_n\to v^k$ and 
$$J(v^k)\geq \beta^k_{(1/2,0)}\to\infty$$ as $k\to\infty$.
\end{altproof}

\appendix
\section{Convergence results and profile decompositions}\label{sec:profile}

In our variational approach, the following lemma replaces compactness results of Strauss for radial functions \cite{BerLions}[Lemma A.I, Lemma A.III] and allows to consider a wider class of symmetric functions. Recall that $\cO\subset \cO(N)$ is a subgroup such that $\R^N$ is {\em compatible with $\cO$} (in the sense of  \cite{Willem}[Definition 1.23], cf. \cite{Lions82}), if
for some $r>0$ 
$$\lim_{|y|\to\infty}m(y,r)=\infty,$$
where
$$m(y,r):=\sup\big\{n\in\N: \hbox{there exist }g_1,...,g_n\in\cO\hbox{ such that } B(g_iy,r)\cap B(g_jy,r)=\emptyset\hbox{ for }i\neq j\big\}$$
and $y\in\R^N$.
For instance $\R^N$ is compatible with $\cO(N)$ and with $\cO_2$.

\begin{Lem}\label{lem:BrezisLieb}
	Suppose that $(u_n)\subset \cD^{1,2}(\R^N)$ is bounded and $u_n(x)\to u_0(x)$ for a.e. $x\in\R^N$.\\ 
(a)	Then 
	\begin{equation}\label{eq:BL1}
		\lim_{n\to\infty}\Big(\int_{\R^N}\Psi(u_n)\, dx-\int_{\R^N}\Psi(u_n-u_0)\, dx\Big)= 
		\int_{\R^N}\Psi(u_0)\, dx
	\end{equation}
	for any function $\Psi:\R\to\R$ of class $\cC^1$ such that $|\Psi'(u_n)|\leq C|s|^{2^*-1}$ for any $s\in\R$ and some constant $C>0$.\\
(b)	Suppose that  $\R^N$ is compatible with $\cO\subset \cO(N)$ 
and assume that each $u_n$ is $\cO$-invariant. If, in addition, $s\mapsto |\Psi(s)|$ satisfies \eqref{eq:Psi}, then
\begin{equation}\label{eq:BL2}
\lim_{n\to\infty}\int_{\R^N}\Psi(u_n)\, dx= 
\int_{\R^N}\Psi(u_0)\, dx,
\end{equation}
and if $s\mapsto |\Psi'(s)s|$ satisfies \eqref{eq:Psi}, then
\begin{equation}\label{eq:BL3}
\lim_{n\to\infty}\int_{\R^N}\Psi'(u_n)u_n\, dx= 
	\int_{\R^N}\Psi'(u_0)u_0\, dx.
\end{equation}
\end{Lem}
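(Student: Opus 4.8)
The plan is to treat (a) as the classical Brezis--Lieb splitting and (b) as its refinement using the symmetric compactness encoded by compatibility. For (a), I would first record the pointwise bound from the mean value theorem and the growth hypothesis: for $a,b\in\R$,
$$|\Psi(a+b)-\Psi(a)|\leq C(|a|+|b|)^{2^*-1}|b|\leq C'\big(|a|^{2^*-1}|b|+|b|^{2^*}\big),$$
and, since $\Psi(0)=0$ (forced by finiteness of the integrals), also $|\Psi(s)|\leq C'|s|^{2^*}$. Setting $a=u_n-u_0$, $b=u_0$ and applying Young's inequality, for every $\eps>0$ one obtains $|\Psi(u_n)-\Psi(u_n-u_0)-\Psi(u_0)|\leq \eps|u_n-u_0|^{2^*}+K_\eps|u_0|^{2^*}$ with $K_\eps$ independent of $n$. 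I would then set $W_n^\eps:=\big(|\Psi(u_n)-\Psi(u_n-u_0)-\Psi(u_0)|-\eps|u_n-u_0|^{2^*}\big)_+$, which is dominated by the fixed $L^1$ function $K_\eps|u_0|^{2^*}$ and tends to $0$ a.e. (because $u_n\to u_0$ a.e. and $\Psi(0)=0$). Dominated convergence gives $\int_{\R^N}W_n^\eps\,dx\to0$, so $\limsup_n\int_{\R^N}|\Psi(u_n)-\Psi(u_n-u_0)-\Psi(u_0)|\,dx\leq \eps\sup_n\|u_n-u_0\|_{L^{2^*}}^{2^*}$; letting $\eps\to0$ and using boundedness in $\cD^{1,2}(\R^N)\hookrightarrow L^{2^*}(\R^N)$ yields \eqref{eq:BL1}. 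No symmetry is used here.

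For (b) I would reduce both \eqref{eq:BL2} and \eqref{eq:BL3} to a single statement: for a continuous $\Theta:\R\to\R$ with $|\Theta|$ satisfying \eqref{eq:Psi}, and a bounded $\cO$-invariant sequence with $u_n\to u_0$ a.e., one has $\int_{\R^N}\Theta(u_n)\,dx\to\int_{\R^N}\Theta(u_0)\,dx$; then \eqref{eq:BL2} is the case $\Theta=\Psi$ and \eqref{eq:BL3} the case $\Theta(s)=\Psi'(s)s$ (continuous since $\Psi\in\cC^1$). First note that $u_0$ is $\cO$-invariant, being the a.e. limit of $\cO$-invariant functions, so $v_n:=u_n-u_0$ is $\cO$-invariant, bounded, $v_n\weakto0$ and $v_n\to0$ a.e. The key splitting $\int_{\R^N}(\Theta(u_n)-\Theta(v_n))\,dx\to\int_{\R^N}\Theta(u_0)\,dx$ follows from the general Brezis--Lieb lemma once I verify its structural inequality
$$|\Theta(a+b)-\Theta(a)|\leq \eps|a|^{2^*}+C_\eps|b|^{2^*}\qquad(a,b\in\R),$$
applied with $a=v_n$, $b=u_0$ (so $\phi_\eps(a)=|a|^{2^*}$, $\psi_\eps(b)=C_\eps|b|^{2^*}$). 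This inequality I would check by splitting on $\{|a|\leq 2|b|\}$, where both terms are $O(|b|^{2^*})$, and on $\{|a|>2|b|\}$ by the three-zone decomposition of $\Theta$ afforded by \eqref{eq:Psi} as in the proof of Lemma \ref{lem:Conv}: small and large $|a|$ are absorbed via $|\Theta(s)|\leq \eps_0|s|^{2^*}$, while the bounded middle zone is handled by uniform continuity of $\Theta$ on compacts. Since $\int_{\R^N}|v_n|^{2^*}\,dx$ is bounded and $\int_{\R^N}|u_0|^{2^*}\,dx<\infty$, the hypotheses of the Brezis--Lieb lemma hold.

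It then remains to show $\int_{\R^N}\Theta(v_n)\,dx\to0$. Since $|\Theta|\geq0$ is continuous and satisfies \eqref{eq:Psi}, and $(v_n)$ is bounded, $\cO$-invariant, converges weakly to $0$, with $\R^N$ compatible with $\cO$, the symmetric variant of Lions' lemma --- Proposition \ref{PropLionsCompatib} (cf. Corollary \ref{CorLions1}) --- gives $\int_{\R^N}|\Theta(v_n)|\,dx\to0$, hence $\int_{\R^N}\Theta(v_n)\,dx\to0$. Combined with the splitting this yields $\int_{\R^N}\Theta(u_n)\,dx\to\int_{\R^N}\Theta(u_0)\,dx$, which specializes to \eqref{eq:BL2} and \eqref{eq:BL3}. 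I expect the main obstacle to be this vanishing step: away from the radial setting the naive Lions lemma only controls $L^2$-mass on a fixed family of balls, and a weakly null $\cO$-invariant sequence could a priori concentrate along the fixed-point directions of $\cO$; the compatibility condition $m(y,r)\to\infty$ is precisely what forces such concentration to be replicated in unboundedly many disjoint translates, contradicting boundedness, and this is the mechanism exploited in Proposition \ref{PropLionsCompatib}. The secondary technical point is the verification of the Brezis--Lieb structural inequality for the merely continuous $\Theta(s)=\Psi'(s)s$ of \eqref{eq:BL3}, for which the growth normalization \eqref{eq:Psi}, rather than a pointwise derivative bound, is the essential input.
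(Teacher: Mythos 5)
Your proposal is correct, but it takes a genuinely different route from the paper's proof at both stages. For (a), the paper does not run the classical Brezis--Lieb $\eps$-splitting with the truncated remainder $W_n^\eps$; it instead writes $\Psi(u_n)-\Psi(u_n-u_0)=\int_0^1\Psi'(u_n-su_0)u_0\,ds$ and passes to the limit by Vitali's convergence theorem, using the growth bound on $\Psi'$ only to get uniform integrability. Both arguments are standard and of equal strength. For (b), the paper proves the vanishing $\sup_{y}\int_{B(y,r)}|u_n-u_0|^2\,dx\to 0$ inline (the estimate $\int_{B(y,r)}|u_n-u_0|^2\,dx\leq C|u_n-u_0|_{2^*}^2\,m(y,r)^{-2/2^*}$ for $|y|$ large, plus Rellich for $|y|$ bounded, then Lemma \ref{lem:Lions}), deduces \eqref{eq:BL2} by combining this with part (a) --- hence still using the $\cC^1$ bound on $\Psi'$ --- and then proves \eqref{eq:BL3} by a separate three-zone H\"older estimate of $\int|\Psi'(u_n)||u_n-u_0|\,dx$ against $\tilde\Psi(s)=\min\{|s|^p,|s|^q\}$. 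You instead unify \eqref{eq:BL2} and \eqref{eq:BL3} into a single claim for continuous $\Theta$ with $|\Theta|$ satisfying \eqref{eq:Psi}, verifying the general Brezis--Lieb structural inequality $|\Theta(a+b)-\Theta(a)|\leq\eps|a|^{2^*}+C_\eps|b|^{2^*}$ directly from \eqref{eq:Psi}; this is cleaner, handles the merely continuous $\Theta(s)=\Psi'(s)s$ without a bespoke estimate, and shows that \eqref{eq:BL2} does not actually need the derivative bound from (a). The one imprecision is your citation of Proposition \ref{PropLionsCompatib} for the vanishing step: as stated it carries the extra hypothesis \eqref{eq:LionsCond12}, which in the present setting ($M=N$, trivial fixed-point subspace) degenerates to $\int_{B(0,r)}|v_n|^2\,dx\to 0$; this does hold because $v_n\weakto 0$ and Rellich applies, but you should say so explicitly (or simply reproduce the paper's two-line compatibility estimate and invoke Lemma \ref{lem:Lions} directly). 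You have correctly identified the underlying mechanism, so this is a citation detail rather than a gap.
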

\begin{proof} (a)
Observe that by Vitali's convergence theorem 
\begin{eqnarray*}
	\int_{\mathbb{R}^N}\Psi(u_n)-\Psi(u_n-u_0)\, dx
	&=&\int_{\mathbb{R}^N}\int_0^1 -\frac{d}{ds}\Psi(u_n-s u_0)\, ds\,dx
	=\int_{\mathbb{R}^N}\int_0^1 \Psi'(u_n-s u_0)u_0\,ds\, dx\\\nonumber
	&\rightarrow& \int_0^1 \int_{\mathbb{R}^N} \Psi'(\tu_0-s u_0)u_0\,dx\,ds
	=\int_{\mathbb{R}^N}\int_0^1 -\frac{d}{ds}\Psi(\tu_0-s u_0)\, ds\, dx\\
	&=&\int_{\mathbb{R}^N}\Psi(u_0)\, dx\nonumber
\end{eqnarray*}
as $n\to\infty$.\\
(b) Suppose that $\R^N$ is compatible with $\cO$ and then
$$m(y,r)\int_{B(y,r)}|u_n-u_0|^{2^*}\,dx\leq \int_{\R^N}|u_n-u_0|^{2^*}\,dx$$
is bounded. Observe that
$$\int_{B(y,r)}|u_n-u_0|^{2}\,dx\leq C\Big(\int_{B(0,r)}|(u_n-u_0)(\cdot+y)|^{2^*}\,dx\Big)^{2/2^*}\leq C|u_n-u_0|^2_{2^*} m(y,r)^{-2/2^*}$$
for some constant $C>0$. Take any $\eps>0$ and note that we find $R>0$ such that 
$$C|u_n-u_0|^2_{2^*} m(y,r)^{-2/2^*}<\eps$$ for $|y|\geq R$ and 
$$\int_{B(y,r)}|u_n-u_0|^{2}\,dx\leq \int_{B(0,r+R)}|u_n-u_0|^{2}\,dx<\eps$$
for $|y|<R$ and sufficiently large $n$. Therefore \eqref{eq:LionsCond11} holds for $u_n-u_0$ and in view of Lemma \ref{lem:Lions} we get 
$$\lim_{n\to\infty}\int_{\R^N}\Psi(u_n-u_0)\, dx=0$$
and \eqref{eq:BL2} holds.
Now observe that for any $\eps>0$, $2<p<2^*<q$ we find $0<\delta<M$ and $c_\eps>0$  such that
$$|\Psi'(s)|\leq \eps |s|^{2^*-1}\quad\hbox{ for }|s|<\delta\hbox{ and }|s|>M,$$
and 
$$|\Psi'(s)|\leq c_\eps\min\big\{|s|^{2^*(1-\frac{1}{p})},|s|^{2^*(1-\frac{1}{q})}\big\}\quad
\hbox{ for }\delta\leq |s|\leq M.$$
Then, by the Vitali convergence theorem and by \eqref{eq:BL2} applied to $\tilde\Psi(s)=\min\{|s|^p,|s|^q\}$ and $(u_n-u_0)$ we obtain
\begin{eqnarray*}
	&&\Big|\int_{\R^N}\Psi'(u_n)u_n-\Psi(u_0)u_0\,dx\Big|\leq \int_{\R^N}|\Psi'(u_n)-\Psi'(u_0)||u_0|\,dx\\
	&&\hspace{5mm}+\int_{\R^N}|\Psi'(u_n)||u_n-u_0|\,dx
	= o(1) + \int_{\R^N}|\Psi'(u_n)||u_n-u_0|\,dx\\
	&& \leq o(1) + \eps|u_n|_{2^*}^{2^*-1}|u_n-u_0|_{2^*}+ \int_{|u_n-u_0|>1}|u_n|^{2^*(1-\frac{1}{p})}|u_n-u_0|\,dx\\
\end{eqnarray*}
\begin{eqnarray*}
	&&\hspace{5mm}+ \int_{|u_n-u_0|\leq 1}|u_n|^{2^*(1-\frac{1}{q})}|u_n-u_0|\,dx\\
	&&\leq o(1) + \eps|u_n|_{2^*}^{2^*-1}|u_n-u_0|_{2^*}
	+ |u_n|_{2^*}^{2^*(1-\frac{1}{p})}\Big(\int_{\R^N}\tilde\Psi(u_n-u_0)\,dx\Big)^{\frac{1}{p}}\\
	&&\hspace{5mm}+|u_n|_{2^*}^{2^*(1-\frac{1}{q})}\Big(\int_{\R^N}\tilde\Psi(u_n-u_0)\,dx\Big)^{\frac{1}{q}}\\
	&&\leq o(1)+\eps|u_n|_{2^*}^{2^*-1}|u_n-u_0|_{2^*}.
\end{eqnarray*}
Since $\eps>0$ is arbitrary we infer that 
$$\int_{\R^N}\Psi'(u_n)u_n\,dx\to \int_{\R^N}\Psi'(u_0)u_0\,dx.$$
\end{proof}

\begin{Prop}\label{PropLionsCompatib}
	Let $\cO=\cO'\times\id\subset \cO(N)$ such that $\cO'\subset \cO(M)$ and $\R^M$ is compatible with $\cO'$ for some $0\leq M\leq N$.
	Suppose that $(u_n)\subset \cD^{1,2}_{\cO}(\R^N)$ is bounded, $r_0>0$ is such that for all $r\geq r_0$
	\begin{equation}\label{eq:LionsCond12}
	\lim_{n\to\infty}\sup_{z\in \R^{N-M}} \int_{B((0,z),r)} |u_n|^2\,dx=0.
	\end{equation}
	Then 
	$$\int_{\R^N} \Psi(u_n)\, dx\to 0\quad\hbox{as } n\to\infty$$
	for any continuous function $\Psi:\R\to [0,\infty)$ such that \eqref{eq:Psi} holds.
\end{Prop}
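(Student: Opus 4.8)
The plan is to reduce everything to the Euclidean Lions lemma already established, namely Lemma~\ref{lem:Lions}: it suffices to show that the boundedness of $(u_n)$, the $\cO$-invariance, and the axial non-concentration \eqref{eq:LionsCond12} together force the \emph{full} vanishing hypothesis \eqref{eq:LionsCond11}, after which Lemma~\ref{lem:Lions} delivers $\int_{\R^N}\Psi(u_n)\,dx\to 0$ at once. Throughout I split $\R^N=\R^M\times\R^{N-M}$ and write a generic center as $y=(y_1,z)$ with $y_1\in\R^M$ and $z\in\R^{N-M}$. I fix once and for all a radius $r>0$ witnessing the compatibility of $\R^M$ with $\cO'$, so that $m(y_1,r)\to\infty$ as $|y_1|\to\infty$, where $m(\cdot,r)$ is computed with respect to $\cO'$ acting on $\R^M$.

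The heart of the argument is to control balls whose center lies far from the invariance axis $\{0\}\times\R^{N-M}$. Since $\cO=\cO'\times\id$ acts by isometries and each $u_n$ is $\cO$-invariant, for every $g\in\cO'$ one has $\int_{B((gy_1,z),r)}|u_n|^{2^*}\,dx=\int_{B(y,r)}|u_n|^{2^*}\,dx$. Choosing $g_1,\dots,g_{m(y_1,r)}\in\cO'$ so that the $\R^M$-balls $B(g_iy_1,r)$ are pairwise disjoint makes the $\R^N$-balls $B((g_iy_1,z),r)$ pairwise disjoint as well (a common point would have its $\R^M$-component in two disjoint balls), whence
$$m(y_1,r)\int_{B(y,r)}|u_n|^{2^*}\,dx\leq \int_{\R^N}|u_n|^{2^*}\,dx\leq C,$$
the last bound coming from $\cD^{1,2}(\R^N)\hookrightarrow L^{2^*}(\R^N)$ and the boundedness of $(u_n)$. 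Because these functions need not be square-integrable, I cannot split the $L^2$ mass directly; instead I pass through $L^{2^*}$ and then apply H\"older on the \emph{fixed}-radius ball, obtaining
$$\int_{B(y,r)}|u_n|^2\,dx\leq |B(0,r)|^{\,1-2/2^*}\Big(\int_{B(y,r)}|u_n|^{2^*}\,dx\Big)^{2/2^*}\leq C'\,m(y_1,r)^{-2/2^*}.$$
Given $\eps>0$, compatibility then supplies $R>0$, which I may take with $R\geq r_0$, such that $\int_{B(y,r)}|u_n|^2\,dx<\eps$ whenever $|y_1|\geq R$, uniformly in $n$ and in $z$.

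It remains to treat centers with $|y_1|<R$. For such $y$ one has $B(y,r)\subset B((0,z),r+R)$, so
$$\int_{B(y,r)}|u_n|^2\,dx\leq \sup_{z'\in\R^{N-M}}\int_{B((0,z'),\,r+R)}|u_n|^2\,dx,$$
and since $r+R\geq r_0$ the hypothesis \eqref{eq:LionsCond12} shows the right-hand side tends to $0$ as $n\to\infty$. Combining the two regimes yields $\limsup_{n\to\infty}\sup_{y\in\R^N}\int_{B(y,r)}|u_n|^2\,dx\leq\eps$, and since $\eps>0$ is arbitrary, \eqref{eq:LionsCond11} holds at this radius $r$; Lemma~\ref{lem:Lions} then finishes the proof. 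The only genuine obstacle is the first step: as the functions are merely in $\cD^{1,2}$, the symmetry-induced ``splitting of mass into $m(y_1,r)$ equal pieces'' must be carried out in $L^{2^*}$ and only afterwards converted to an $L^2$ estimate via H\"older, all while keeping the bound uniform in the free variable $z$.
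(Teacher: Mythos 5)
Your proof is correct and follows essentially the same route as the paper's: both verify the full vanishing hypothesis \eqref{eq:LionsCond11} by splitting centers into those far from the axis $\{0\}\times\R^{N-M}$ (controlled via disjoint $\cO'$-translates of the ball and the uniform $L^{2^*}$ bound, converted to an $L^2$ estimate by H\"older on a fixed-radius ball, exactly as in the proof of Lemma~\ref{lem:BrezisLieb}(b)) and those near the axis (controlled by \eqref{eq:LionsCond12}), and then invoke Lemma~\ref{lem:Lions}. The only difference is presentational: the paper argues by contradiction with a sequence of bad centers, while you give the direct uniform estimate.
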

\begin{proof}
	Suppose that
	\begin{equation}\label{eq:LionsCond12proof1}
	\int_{B((y_n,z_n),r_1)} |u_n|^{2}\,dx\geq c>0
	\end{equation}
	for some sequence $(y_n,z_n)\subset \R^M\times\R^{N-M}$ and a constant $c$, where $r_1$ is such that $$\lim_{|y|\to\infty,\;y\in\R^M}m(y,r_1)=\infty.$$
	Then $\int_{B((y_n,z_n),r_1)} |u_n|^{2^*}\,dx$ is bounded away from $0$. Since $(u_n)$ is bounded in $L^{2^*}(\R^N)$ and
	in the family $\{B(gy_n,r_1)\}_{g\in\cO'}$ we find an increasing number of disjoint balls as $|y_n|\to\infty$, we infer that $|y_n|$ must be bounded. Then for sufficiently large $r$ one obtains
	$$\int_{B((0,z_n),r)} |u_n|^2\,dx\geq \int_{B((y_n,z_n),r_1)} |u_n|^2\,dx\geq c>0$$
	and we get a contradiction with \eqref{eq:LionsCond12}. Therefore  \eqref{eq:LionsCond11} is satisfied with $r=r_1$ and by Lemma \ref{lem:Lions} we conclude.
\end{proof}

At the end of this section we would like to mention that the above variant of Brezis-Lieb lemma \eqref{eq:BL1} and Lemma \ref{lem:Lions} allow to obtain the following profile decomposition theorem in $\cD^{1,2}(\R^N)$ in the spirit of G\'erard \cite{Gerard}, cf. \cite{Nawa}.

\begin{Th}\label{ThGerard}
	Suppose that $(u_n)\subset \cD^{1,2}(\R^N)$ is bounded.
	Then there are sequences
	$(\tu_i)_{i=0}^\infty\subset \cD^{1,2}(\R^N)$, $(y_n^i)_{i=0}^\infty\subset \R^N$ for any $n\geq 1$, such that $y_n^0=0$,
	$|y_n^i-y_n^j|\rightarrow \infty$ as $n\to\infty$ for $i\neq j$, and passing to a subsequence, the following conditions hold for any $i\geq 0$:
	\begin{eqnarray}%\label{EqSplit1a}
	\nonumber
	&& u_n(\cdot+y_n^i)\weakto \tu_i\; \hbox{ in } \cD^{1,2}(\R^N)\text{ as }n\to\infty,\\
	%&&\tu_i\neq 0\text{ for }  i \geq 1,\\
	\label{EqSplit2a}
	&& \lim_{n\to\infty}\int_{\R^N}|\nabla u_n|^2\, dx=\sum_{j=0}^i \int_{\R^N}|\nabla\tu_j|^2\, dx+\lim_{n\to\infty}\int_{\R^N}|\nabla v_n^i|^2\, dx,
	\end{eqnarray}
	where $v_n^i:=u_n-\sum_{j=0}^i\tu_j(\cdot-y_n^j)$ and
	\begin{eqnarray}
	&& \limsup_{n\to\infty}\int_{\R^N}\Psi(u_n)\, dx= \sum_{j=0}^i
	\int_{\R^N}\Psi(\tu_j)\, dx+\limsup_{n\to\infty}\int_{\R^N}\Psi(v_n^i)\, dx	\label{EqSplit3a}
	\end{eqnarray}
	for any function $\Psi:\R\to\R$ of class $\cC^1$ such that $|\Psi'(s)|\leq C|s|^{2^*-1}$ for any $s\in\R$ and some constant $C>0$.
	Moreover, if in addition $\Psi$ satisfies \eqref{eq:Psi}, then
	\begin{equation}\label{EqSplit4a}
	\lim_{i\to\infty}\Big(\limsup_{n\to\infty}\int_{\R^N}\Psi(v_n^i)\, dx\Big)=0.
	\end{equation}
\end{Th}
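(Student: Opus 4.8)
The plan is to construct the profiles $\tu_i$ and the shifts $y_n^i$ by an inductive bubble-extraction procedure, and then to read off the three conclusions from the Hilbert space structure of $\cD^{1,2}(\R^N)$, the Brezis--Lieb type identity \eqref{eq:BL1}, and the variant of Lions' lemma (Lemma \ref{lem:Lions}). I would start by setting $y_n^0=0$ and, passing to a subsequence, letting $u_n\weakto\tu_0$; the compact embedding $\cD^{1,2}(\R^N)\hookrightarrow L^2_{loc}(\R^N)$ (Rellich) gives $u_n\to\tu_0$ a.e. along this subsequence. Inductively, given $v_n^i:=u_n-\sum_{j=0}^i\tu_j(\cdot-y_n^j)$, I set $\rho_i:=\limsup_{n\to\infty}\sup_{y\in\R^N}\int_{B(y,1)}|v_n^i|^2\,dx$. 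If $\rho_i=0$ the procedure terminates; otherwise I choose $y_n^{i+1}$ with $\int_{B(y_n^{i+1},1)}|v_n^i|^2\,dx\geq\rho_i/2$, pass to a subsequence so that $v_n^i(\cdot+y_n^{i+1})\weakto\tu_{i+1}$ in $\cD^{1,2}(\R^N)$ and a.e., and note $\tu_{i+1}\neq0$ because Rellich forces $\int_{B(0,1)}|\tu_{i+1}|^2\,dx\geq\rho_i/2$. The separation $|y_n^{i+1}-y_n^j|\to\infty$ for $j\leq i$ holds because $v_n^i(\cdot+y_n^j)\weakto0$ by construction, so a bounded shift would give $\tu_{i+1}=0$. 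A diagonal subsequence then makes all statements hold simultaneously for every $i$; and since $\tu_j(\cdot+y_n^i-y_n^j)\weakto0$ whenever $|y_n^i-y_n^j|\to\infty$, this also yields the asserted $u_n(\cdot+y_n^i)\weakto\tu_i$.

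The energy identity \eqref{EqSplit2a} is then pure Hilbert space bookkeeping. Since $\langle u_n,\tu_j(\cdot-y_n^j)\rangle=\langle u_n(\cdot+y_n^j),\tu_j\rangle\to\|\tu_j\|^2$ and the shifted gradients are asymptotically orthogonal (because $|y_n^i-y_n^j|\to\infty$), expanding $\|v_n^i\|^2$ gives $\|v_n^i\|^2=\|u_n\|^2-\sum_{j=0}^i\|\tu_j\|^2+o(1)$, which is \eqref{EqSplit2a} after passing to a subsequence along which the relevant limits exist, and which also shows $\sum_j\|\tu_j\|^2\leq\liminf_n\|u_n\|^2<\infty$. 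The nonlinear identity \eqref{EqSplit3a} follows by iterating \eqref{eq:BL1}: applied to $u_n\weakto\tu_0$ it gives $\int\Psi(u_n)=\int\Psi(\tu_0)+\int\Psi(v_n^0)+o(1)$, and applied to $v_n^i(\cdot+y_n^{i+1})\weakto\tu_{i+1}$ together with translation invariance of the integral it gives $\int\Psi(v_n^i)=\int\Psi(\tu_{i+1})+\int\Psi(v_n^{i+1})+o(1)$; telescoping and taking $\limsup_n$ (the profile terms being constants in $n$) produces \eqref{EqSplit3a}.

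The heart of the matter is \eqref{EqSplit4a}. First, the local $L^2$ lower bound on $\tu_{i+1}$ upgrades to a Dirichlet lower bound: by H\"older and Sobolev, $\rho_i/2\leq\int_{B(0,1)}|\tu_{i+1}|^2\,dx\leq C\|\tu_{i+1}\|_{L^{2^*}}^2\leq C'\|\tu_{i+1}\|^2$, so $\|\tu_{i+1}\|^2\geq c\,\rho_i$. Combined with the summability $\sum_j\|\tu_j\|^2<\infty$ from \eqref{EqSplit2a}, this forces $\rho_i\to0$ as $i\to\infty$. It remains to convert this into $\limsup_n\int\Psi(v_n^i)\to0$, and here a quantitative form of Lemma \ref{lem:Lions} is needed rather than its bare statement: reusing the estimate from the proof of Lemma \ref{lem:Conv}, for each $\eps>0$ one bounds $\int\Psi(v_n^i)\,dx\leq\eps\,\|v_n^i\|_{L^{2^*}}^{2^*}+c_\eps\,K\big(\sup_{y\in\Z^N}\int_\Omega|v_n^i(\cdot+y)|^p\big)^{1-2/p}$, with $K$ a uniform bound and $\Omega=(0,1)^N$; interpolating the local $L^p$ norm between the local $L^2$ norm and the uniformly bounded global $L^{2^*}$ norm shows $\sup_{y}\int_\Omega|v_n^i(\cdot+y)|^p\to0$ as $\rho_i\to0$. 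Hence $\limsup_i\limsup_n\int\Psi(v_n^i)\leq\eps K'$, and letting $\eps\to0$ gives \eqref{EqSplit4a}.

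The main obstacle is precisely this last quantitative passage: one must track the constants in Lions' lemma uniformly in $i$ so that small local $L^2$ concentration yields uniformly small $\int\Psi$, and one must guarantee $\rho_i\to0$ through the Dirichlet lower bound on the profiles. The remaining points (asymptotic orthogonality of the shifts, the a.e. convergence supplied by Rellich needed to invoke \eqref{eq:BL1}, and the diagonal extraction) are routine but should be carried out carefully to ensure a single subsequence serves all $i$ at once.
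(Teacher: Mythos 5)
Your proof is correct and follows essentially the same route as the paper: iterative extraction of profiles via a local $L^2$ concentration functional, the Brezis--Lieb identity \eqref{eq:BL1} for \eqref{EqSplit3a}, and a quantitative form of Lemma \ref{lem:Lions} combined with the summability of $\sum_j\|\tu_j\|^2$ to drive the concentration levels to zero for \eqref{EqSplit4a}. The only cosmetic difference is that the paper's construction (following \cite{MederskiBL}) uses balls of increasing radii $r_i$ and the concentration functional $\lim_{r\to\infty}\limsup_{n}\sup_y\int_{B(y,r)}|v_n^{i-1}|^2\,dx$ whereas you work with a fixed radius; the two are equivalent by a covering argument, and your explicit treatment of \eqref{EqSplit4a} fills in the step the paper defers to \cite{MederskiBL}[Theorem 1.4].
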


\begin{proof} In order to prove \eqref{EqSplit2a}--\eqref{EqSplit4a}, we follow arguments of proof of \cite{MederskiBL}[Theorem 1.4] with some modifications. Namely,
	let $(u_n)\subset \cD^{1,2}(\R^N)$ be a bounded sequence and $\Psi$ as above. Applying Lemma \ref{lem:Lions} and up to a subsequence we find 
	$K\in \N\cup \{\infty\}$ and there is a sequence
	$(\tu_i)_{i=0}^K\subset \cD^{1,2}(\R^N)$, for $0\leq i <K+1$ ($K=\infty$ then $K+1=\infty$ as well),  there are sequences $(v_n^i)\subset \cD^{1,2}(\R^N)$, $(y_n^i)\subset \R^N$ and positive numbers $(c_i)_{i=0}^{K}, (r_i)_{i=0}^{K}$ such that $y_n^0=0$, $r_0=0$ and for any  $0\leq i<K+1$ one has
	\begin{eqnarray*}	
	&&u_n(\cdot+y_n^i)\weakto\tu_i\hbox{ in }\cD^{1,2}(\R^N)\hbox{ and }\int_{B(0,n)}|u_n(\cdot+y_n^i)-\tu_i|^2\,dx\to 0\hbox{ as }n\to\infty,\\	
	&&\tu_i\neq 0\hbox{ for }1\leq i <K+1,\\
	%\label{Eqxnxm}
	&&|y_n^i-y_n^j|\geq n-r_i-r_j\hbox{ for } 0\leq j\neq i< K+1 \hbox{ and sufficeintly large }n,\\
	&& v_n^{-1}:=u_n\hbox{ and }v_n^i:=v_n^{i-1}-\tu_i(\cdot-y_n^i),\\
	&&\int_{B(y_n^{i},r_i)}|v_n^{i-1}|^2\, dx \geq c_{i}\geq\frac{1}{2}\sup_{y\in\R^N}\int_{B(y,r_i)}|v_n^{i-1}|^2\, dx\\
	&&\hspace{0.1cm}	r_i\geq \max\{i,r_{i-1}\},\hbox{if }i\geq 1,\hbox{ and } c_i= \frac{3}{4}\lim_{r\to\infty}\limsup_{n\to\infty}\sup_{y\in\R^N}\int_{B(y,r)}|v_n^{i-1}|^2\,dx\nonumber
	>0
	\end{eqnarray*}
	and \eqref{EqSplit2a} is satisfied.
	Next, we prove that \eqref{EqSplit3a} holds for every $i\geq 0$ by applying \eqref{eq:BL1}. 
	If there is $i\geq 0$ such that
	\begin{equation*}
	\lim_{n\to\infty}\sup_{y\in\R^N}\int_{B(y,r)}|v_n^i|^2\, dx=0
	\end{equation*}
	for every $r\geq \max\{i,r_i\}$,
	then $K=i$. If, in addition, \eqref{eq:Psi} holds, then in view of Lemma \ref{lem:Lions} we obtain that
	$$\lim_{n\to\infty}\int_{\R^N}\Psi(v_n^i)\, dx=0$$
	and we finish the proof by setting $\tu_j=0$ for $j>i$. 
	Otherwise we have $K=\infty$ and we prove \eqref{EqSplit4a} similarly as in \cite{MederskiBL}[Theorem 1.4]. 
\end{proof}

{\bf Acknowledgements.}
The author would like to thank L. Jeanjean for his remarks concerning the approximation $J_\eps$.
He is also grateful to the members of the CRC 1173 as well as the members of the Institute of Analysis at Karlsruhe Institute of Technology (KIT), where part of this work has been done, for their invitation, support and warm hospitality. The author was partially supported by the National Science Centre, Poland (Grant No. 2017/26/E/ST1/00817) and by the Deutsche Forschungs\-gemeinschaft (DFG) through CRC 1173.

\end{document}